  \pgfplotsset{compat=newest}
\pgfplotsset{every axis/.append style={
        scaled ticks = false, 
        tick label style={/pgf/number format/fixed}
    }
}
\newlength\figureheight 
\newlength\figurewidth 
\numberwithin{theorem}{section}
\newtheorem{remark}{Remark}
\newcommand{\TheTitle}{Treatment of complex interfaces for Maxwell's equations with continuous coefficients using the correction function method} 
\newcommand{\TheAuthors}{Y.-M. Law and A. N. Marques and J.C. Nave}
\title{{\TheTitle}}
\author{
  Yann-Meing Law\thanks{Department of Mathematics and Statistics, McGill University, Montr\'{e}al, QC H3A 0B9, Canada. (\email{yann-meing.law-kamcio@mcgill.ca}, \email{jean-christophe.nave@mcgill.ca})}
  \and 
  Alexandre Noll Marques\thanks{Department of Aeronautics and Astronautics, Massachusetts Institute of Technology, Cambridge, MA 02139-4307. (\email{noll@mit.edu})}
  \and
  Jean-Christophe Nave\footnotemark[1]
}
\begin{document}

\maketitle

\begin{abstract}
We propose a high-order FDTD scheme based on the correction function method (CFM) 
	to treat interfaces with complex geometry without increasing the complexity of the numerical approach for 
	constant coefficients.
Correction functions are modeled by a system of PDEs based on Maxwell's equations with interface conditions. 
To be able to compute approximations of correction functions, 
	a functional that is a square measure of the error associated with the correction functions' system of PDEs 
	is minimized in a divergence-free discrete functional space. 
Afterward, 
	approximations of correction functions are used to correct a FDTD scheme in the vicinity of an interface where 
	it is needed.
We perform a perturbation analysis on the correction functions' system of PDEs. 
The discrete divergence constraint and the consistency of resulting schemes are studied.
Numerical experiments are performed for problems with different geometries of the interface.
A second-order convergence is obtained for a second-order FDTD scheme corrected using the CFM.
High-order convergence is obtained with a corrected fourth-order FDTD scheme. 
The discontinuities within solutions are accurately captured without spurious oscillations. 
\end{abstract}



\section{Introduction}
Maxwell interface problems arise when dielectric materials are considered,
	or when surface charges and currents are present at the interface. 
In computational electromagnetics, 
	the treatment of interface conditions between materials is challenging for several reasons,
	such as the treatment of complex geometries of the interface, 
	the level of complexity of a numerical method for arbitrarily complex interfaces and the consideration of 
	discontinuous coefficients to name a few \cite{Hesthaven2003}.
	
To handle interface conditions, 
	various numerical strategies use the Immersed Interface Method (IIM) \cite{Leveque1994} or 
	the Matched Interface and Boundary (MIB) method \cite{Zhao2004}
	for dielectric interface \cite{Deng2008}, 
	perfectly electric conducting (PEC) boundaries \cite{Zhao2010} and Drude materials \cite{Nguyen2015}. 
However,
	high-order schemes are difficult to achieve using these approaches for complex interfaces.
An alternative approach is to use the correction function method (CFM) \cite{Marques2011}, 
	which was inspired by the Ghost Fluid Method (GFM) \cite{Fedkiw1999}. 
This method was originally developed to handle Poisson's equation with interface jump conditions 
	for arbitrarily complex interfaces.
In contrast to the GFM for which high accuracy is hard to obtain,
	the CFM achieves high-order accuracy by means of a minimization problem.
The CFM's minimization problem is derived as follows.
Based on the original problem, 
	a system of partial differential equations (PDEs)  
	for which the solution corresponds to a function, 
	namely the correction function,
	is derived.
A functional that is a square measure of the error associated with the correction function's system of PDEs is 
	minimized on patches around the interface in an appropriate functional space.
This allows us to compute approximations of the correction function to correct the finite difference (FD) scheme
	in the vicinity of an interface.
The CFM was applied on Poisson's equation with piecewise constant coefficients \cite{Marques2017} 
	and on the wave equation with constant coefficients \cite{Abraham2018}.
	
In addition to the difficulties associated with the treatment of the interface, 
	one needs to satisfy at the discrete level or to accurately approximate the divergence-free constraints 
	coming from Maxwell's equations to obtain accurate results.
Many numerical methods were proposed to enforce these constraints,
	such as Yee's scheme \cite{Yee1966} in finite-difference time-domain (FDTD) methods,  
	local divergence-free shape functions in finite element methods \cite{Cockburn2004,Brenner2008,Jin2014} and
	penalization approaches \cite{Assous1993,Munz2000}.
	
In this work, 
	we focus on the construction of high-order FDTD schemes for arbitrarily complex interfaces
	without increasing the complexity of the numerical scheme for constant coefficients. 
The main goal of this paper is to demonstrate the feasibility to construct such schemes using the CFM.
To our opinion, 
	this is the first necessary stepping stone towards a general numerical approach to treat interface conditions with 
	discontinuous coefficients.
Discontinuous coefficients introduce additional complexity in the context of the CFM,
	and we will address such problems in future work.
We choose FDTD schemes composed of a staggered finite difference scheme in space, 
	similar to what is done for Yee's scheme, 
	and the fourth-order Runge-Kutta method as a time-stepping method.  
The staggered grid in space guarantees that the nodes far from the interface satisfy 
	the divergence constraints at the discrete level.
The CFM requires a functional to be minimized in a chosen functional space.
In our case, 
	the functional coming from correction functions' system of PDEs is minimized within a divergence-free functional space, 
	which again enforces the divergence constraints.
Two-dimensional numerical examples based on the transversal magnetic  (TM$_z$)  mode are investigated to verify the proposed numerical strategy.

The paper is structured as follows.
In \cref{sec:defPblm},
	we define the problem, 
	namely Maxwell's equations with interface jump conditions.
The correction function method is introduced in \cref{sec:CFM}.
We derive the correction functions' system of PDEs coming from Maxwell's equations 
	and perform a perturbation analysis. 
The minimization procedure of the discrete problem is described.
The combination of the staggered finite difference scheme with the fourth-order Runge-Kutta method 
	and the CFM is presented 
	in \cref{sec:2D}.
The consistency and the discrete divergence constraint of the proposed schemes are discussed.
Several two-dimensional numerical examples with complex interfaces are investigated in \cref{sec:numEx}.

\section{Definition of the Problem} \label{sec:defPblm}
Consider a domain $\Omega$ subdivided into two subdomains $\Omega^+$ and $\Omega^-$ for which the interface $\Gamma$
	between the subdomains is stationary, 
	that is it does not vary in time, 
	and allows the magnetic field and the electric field to be discontinuous.
The jumps in the magnetic field and the electric field are denoted as 
\begin{equation*}
\begin{aligned}
\llbracket \mathbold{H} \rrbracket =&\,\, \mathbold{H}^+ - \mathbold{H}^-, \\
\llbracket \mathbold{E} \rrbracket =&\,\, \mathbold{E}^+ - \mathbold{E}^-,
\end{aligned}
\end{equation*}
where $\mathbold{H}^+$ and $\mathbold{E}^+$ are the solutions in $\Omega^+$, 
	and $\mathbold{H}^-$ and $\mathbold{E}^-$ are the solutions in $\Omega^-$.
We also consider the boundary $\partial \Omega$ 
	and a time interval $I = [ 0, T ]$.
The geometry of a typical domain is illustrated in \cref{fig:typicalDomain}.  
 Assuming linear media in such a domain and Ohm's law, 
	Maxwell's equations are then given by
\begin{subequations} \label{eq:pblmDefinition}
\begin{align}
\partial_t (\mu\,\mathbold{H}) + \nabla\times \mathbold{E} =&\,\, 0 \quad \text{in } \Omega \times I, \label{eq:Faraday} \\
\partial_t (\epsilon\,\mathbold{E}) - \nabla\times\mathbold{H} =&\,\, -\sigma\,\mathbold{E} \quad \text{in } \Omega \times I , \label{eq:AmpereMaxwell}\\
\nabla\cdot(\epsilon\,\mathbold{E}) =&\,\, \rho \quad \text{in } \Omega \times I , \label{eq:divE}\\
\nabla\cdot(\mu\,\mathbold{H}) =&\,\, 0 \quad \text{in } \Omega \times I , \label{eq:divH}\\
\hat{\mathbold{n}}\times\llbracket \mathbold{E} \rrbracket =&\,\, 0 \quad \text{on } \Gamma \times I ,\label{eq:tangentEInterf}\\
\hat{\mathbold{n}}\times\llbracket \mathbold{H} \rrbracket =&\,\, \mathbold{J}_s \quad \text{on } \Gamma \times I ,\label{eq:tangentHInterf}\\
\hat{\mathbold{n}}\cdot\llbracket \epsilon\,\mathbold{E} \rrbracket =&\,\, \rho_s \quad \text{on } \Gamma \times I ,\label{eq:normalEInterf}\\
\hat{\mathbold{n}}\cdot\llbracket \mu\,\mathbold{H} \rrbracket =&\,\, 0 \quad \text{on } \Gamma \times I ,\label{eq:normalHInterf}\\
\mathbold{n}\times\mathbold{H} =&\,\, \mathbold{e}(\mathbold{x},t) \quad \text{on } \partial \Omega \times I,\label{eq:DirichletH}\\
\mathbold{n}\times\mathbold{E} =&\,\, \mathbold{g}(\mathbold{x},t) \quad \text{on } \partial \Omega \times I,\label{eq:DirichletE}\\
\mathbold{H} =&\,\, \mathbold{H}(\mathbold{x},0)	\quad \text{in } \Omega, \label{eq:InitialCdnH}\\
\mathbold{E} =&\,\, \mathbold{E}(\mathbold{x},0)	\quad \text{in } \Omega, \label{eq:InitialCdnE}
\end{align}
\end{subequations}
	where $\mu$ is the magnetic permeability, 
	$\epsilon$ is the electric permittivity,
	$\sigma$ is the conductivity,
	$\rho$ is the electric charge density,
	$\mathbold{J}_s$ is the surface current density,
	$\rho_s$ is the surface charge density,
	$\mathbold{n}$ is the unit outward normal to $\partial \Omega$ and 
	$\hat{\mathbold{n}}$ is the unit normal to the interface $\Gamma$ pointing toward $\Omega^+$.
Equation \cref{eq:Faraday} to \cref{eq:divE} are known respectively as Faraday's law, 
	Amp\`{e}re-Maxwell's law and Gauss' law.
The divergence-free constraint on the magnetic induction field is given by equation \cref{eq:divH}. 
Interface conditions on $\Gamma$ are given by equations \cref{eq:tangentEInterf} to \cref{eq:normalHInterf}, 
	and boundary conditions and initial conditions are given by equations \cref{eq:DirichletH} 
	to \cref{eq:InitialCdnE}.
Even if divergence constraints \cref{eq:divE} and \cref{eq:divH} seem to be redundant, 
	it is important to consider them in order to guarantee the uniqueness of the solution \cite{Jiang1996}.
As mentioned in the introduction,  
	it also helps to obtain accurate numerical solutions. 
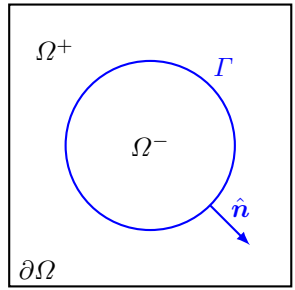
\begin{figure}[htbp]
 	\centering
 	\tdplotsetmaincoords{75}{105}
	\tikzset{external/export next=false}
  \begin{tikzpicture}[scale=0.75]
   	\draw[-latex,thick] (0,0)--(0,5)--(5,5)--(5,0)--cycle; 
	\draw[-latex,thick,blue] (2.5,2.5) circle [radius=1.5];
	\draw[-latex,thick,blue] (3.56066,1.43934)--(4.26777,0.732233);
  	\draw (4.1,1.4) node {$\color{blue}\hat{\mathbold{n}}$};
  	\draw (3.8,3.9) node {$\color{blue}\Gamma$};
  	\draw (0.8,4.2) node {$\Omega^+$};
   	\draw (2.5,2.5) node {$\Omega^-$};
  	\draw (0.5,0.3) node {$\partial \Omega$};
  \end{tikzpicture}
  \caption{Geometry of a domain $\Omega$ with an interface $\Gamma$.}
\label{fig:typicalDomain}
\end{figure}

To ease the verification of the proposed FDTD schemes, 
	we use divergence-free source terms in each subdomain, 
	that is $\mathbold{f}_1^+(\mathbold{x},t)$ in $\Omega^+$ and 
	$\mathbold{f}_1^-(\mathbold{x},t)$ in $\Omega^-$, 
	for Faraday's law.  
For Amp\`{e}re-Maxwell's law, 
	we consider $\mathbold{f}_2^+(\mathbold{x},t)$ and $\mathbold{f}_2^-(\mathbold{x},t)$ respectively 
	in $\Omega^+$ and $\Omega^-$ as source terms. 
We also use more general interface conditions, 
	given by 
\begin{equation*}
	\begin{aligned}
	\hat{\mathbold{n}}\times\llbracket \mathbold{E} \rrbracket =&\,\, \mathbold{a}(\mathbold{x},t)  & \text{on } & \Gamma \times I ,\\
	\hat{\mathbold{n}}\times\llbracket \mathbold{H} \rrbracket =&\,\, \mathbold{b}(\mathbold{x},t)  & \text{on } & \Gamma \times I ,\\
	\hat{\mathbold{n}}\cdot\llbracket \epsilon\,\mathbold{E} \rrbracket =&\,\, c(\mathbold{x},t)  & \text{on } & \Gamma \times I ,\\
	\hat{\mathbold{n}}\cdot\llbracket \mu\,\mathbold{H} \rrbracket =&\,\, d(\mathbold{x},t)  & \text{on } & \Gamma \times I.
	\end{aligned}
\end{equation*}
Hence, 
	we allow both the tangential and normal components 
	of $\mathbold{H}$ and $\mathbold{E}$ across the interface to be discontinuous.
Even if these source terms and interface conditions are not substantiated by physics, 
	it helps the verification of the numerical approach by using manufactured solutions in a more general framework.

\section{Correction Function Method} \label{sec:CFM}

In this section, 
	we first present the idea behind the correction function method and the benefits of using it.
We then define a system of PDEs coming from problem \cref{eq:pblmDefinition} that models correction functions.
A perturbation analysis is performed on the correction functions' system of PDEs.
A quadratic functional that is a square measure of the error associated with the correction functions' system of PDEs 
	is then derived.
This functional is then minimized in a discrete functional space to obtain approximations of correction functions.
Particular attention is paid to the choice of the discrete functional space in order to guarantee the divergence-free constraint. 

\subsection{Introduction to the CFM}
Noticing first that the solution to problem \cref{eq:pblmDefinition} is discontinuous, 
	one cannot use {\it a priori} a numerical method, 
	such as a standard finite difference method,
	that requires at least the solution to be in $\mathcal{C}^1(\Omega)$.
In the following,
	we show how to circumvent this issue by using a correction function
	that extends the solution in different subdomains and,
	hence, 
	allow us to use FD schemes.
	
For simplicity and without loss of generality, 
	we show the principle behind the CFM through an 1-D example problem. 
Let us assume a domain $\Omega = [x_{\ell},x_r]$ divided in $N_x$ cells. 
The nodes are defined as $x_{i+1/2} = x_{\ell} + i\,\Delta x$ for $i = 0, \dots, N_x$, 
	where $\Delta x = \tfrac{x_r-x_{\ell}}{N_x}$.
For a given $i$, 
	we now consider an interface $\Gamma$ between $x_{i-1/2} \in \Omega^+$ and $x_{i+1/2} \in \Omega^-$.
Let us suppose that we want to compute a second-order approximation of the first derivative of $H(x)$ at the cell center $x_i \in \Omega^+$. 
We clearly have 
$$\partial_xH^+(x_i) \approx \partial_x H^+_i \neq \frac{H^-_{i+1/2}-H^+_{i-1/2}}{\Delta x}$$
	because of the discontinuity at the interface $\Gamma$.
However, 
	assuming for the moment that we can extend the solution $H^+$ in the domain $\Omega^-$ in such a way that
\begin{equation*}
	\begin{aligned}
\partial_x H^+_i =&\,\, \frac{H^+_{i+1/2}-H^+_{i-1/2}}{\Delta x} \\
			=&\,\, \frac{(H^-_{i+1/2} + D_{i+1/2})- H^+_{i-1/2}}{\Delta x} \\
			=&\,\, \frac{H^-_{i+1/2}- H^+_{i-1/2}}{\Delta x} + \frac{D_{i+1/2}}{\Delta x},
	\end{aligned}
\end{equation*}
	where $D_{i+1/2} = H_{i+1/2}^+ - H_{i+1/2}^-$ is a correction function evaluated at $x_{i+1/2}$. 
We are therefore able to compute an accurate approximation of $\partial_x H^+_i$.
In a PDE context, 
	the term $\frac{D_{i+1/2}}{\Delta x}$ acts as a source term.
In the next subsection, 
	we build the governing correction functions' system of PDEs coming from Maxwell's equations \cref{eq:pblmDefinition} 
	for which the solutions are defined as correction functions, 
	namely $D$ in the above 1-D example.
	
\subsection{CFM for Maxwell's equations} \label{sec:MaxwellCFM}
To find the correction functions' system of PDEs associated with Maxwell's equations, 
	we consider a small region $\Omega_{\Gamma}$ of the domain that encloses the interface $\Gamma$. 
We assume that $\mathbold{H}^+$, 
	$\mathbold{H}^-$, 
	$\mathbold{E}^+$, 
	$\mathbold{E}^-$ and the associated source terms can be smoothly extended in $\Omega_{\Gamma}\times I$ in such a way that 
	Maxwell's equations are still satisfied, 
	that is 
\begin{equation} \label{eq:subDomainCF}
\begin{aligned}
\mu\,\partial_t \mathbold{H}^+ + \nabla\times \mathbold{E}^+ =&\,\, \mathbold{f}^+_1(\mathbold{x},t) \quad \text{in } \Omega_{\Gamma} \times I , \\
\epsilon\,\partial_t \mathbold{E}^+ - \nabla\times\mathbold{H}^+ =&\,\, -\sigma\,\mathbold{E}^+ + \mathbold{f}^+_2(\mathbold{x},t) \quad \text{in } \Omega_{\Gamma} \times I , \\
\nabla\cdot\mathbold{E}^+ =&\,\, \frac{\rho}{\epsilon} \quad \text{in } \Omega_{\Gamma} \times I , \\
\nabla\cdot\mathbold{H}^+ =&\,\, 0 \quad \text{in } \Omega_{\Gamma} \times I , \\
\mu\,\partial_t \mathbold{H}^- + \nabla\times \mathbold{E}^- =&\,\, \mathbold{f}^-_1(\mathbold{x},t) \quad \text{in } \Omega_{\Gamma} \times I , \\
\epsilon\,\partial_t \mathbold{E}^- - \nabla\times\mathbold{H}^- =&\,\, -\sigma\,\mathbold{E}^- + \mathbold{f}^-_2(\mathbold{x},t) \quad \text{in } \Omega_{\Gamma} \times I, \\
\nabla\cdot\mathbold{E}^- =&\,\, \frac{\rho}{\epsilon} \quad \text{in } \Omega_{\Gamma} \times I , \\
\nabla\cdot\mathbold{H}^- =&\,\, 0 \quad \text{in } \Omega_{\Gamma} \times I, \\
\hat{\mathbold{n}}\times\llbracket \mathbold{E} \rrbracket =&\,\, \mathbold{a}(\mathbold{x},t) \quad \text{on } \Gamma \times I ,\\
\hat{\mathbold{n}}\times\llbracket \mathbold{H} \rrbracket =&\,\, \mathbold{b}(\mathbold{x},t) \quad \text{on } \Gamma \times I ,\\
\hat{\mathbold{n}}\cdot\llbracket \epsilon\,\mathbold{E} \rrbracket =&\,\, c(\mathbold{x},t) \quad \text{on } \Gamma \times I ,\\
\hat{\mathbold{n}}\cdot\llbracket \mu\,\mathbold{H} \rrbracket =&\,\, d(\mathbold{x},t) \quad \text{on } \Gamma \times I.
\end{aligned}
\end{equation}
Subtracting from the equations for $\mathbold{H}^+$ and $\mathbold{E}^+$ the equations for $\mathbold{H}^-$ and $\mathbold{E}^-$ of system \cref{eq:subDomainCF}, 
	we obtain the following system of equations
\begin{subequations} \label{eq:systCFM}
\begin{align}
\mu\,\partial_t \mathbold{D}_H + \nabla\times \mathbold{D}_E =&\,\, \mathbold{f}_{D_1}(\mathbold{x},t) \quad \text{in } \Omega_{\Gamma} \times I , \label{eq:FaradayCFM} \\
\epsilon\,\partial_t \mathbold{D}_E - \nabla\times\mathbold{D}_H =&\,\, -\sigma\,\mathbold{D}_E + \mathbold{f}_{D_2}(\mathbold{x},t) \quad \text{in } \Omega_{\Gamma}  \times I , \label{eq:AmpereMaxwellCFM}\\
\nabla\cdot\mathbold{D}_E =&\,\, 0 \quad \text{in } \Omega_{\Gamma}  \times I , \label{eq:divECFM}\\
\nabla\cdot\mathbold{D}_H =&\,\, 0 \quad \text{in } \Omega_{\Gamma}  \times I , \label{eq:divHCFM}\\
\hat{\mathbold{n}}\times \mathbold{D}_E  =&\,\, \mathbold{a}(\mathbold{x},t) \quad \text{on } \Gamma \times I ,\label{eq:tangentEInterfCFM}\\
\hat{\mathbold{n}}\times \mathbold{D}_H  =&\,\, \mathbold{b}(\mathbold{x},t) \quad \text{on } \Gamma \times I ,\label{eq:tangentHInterfCFM}\\
\hat{\mathbold{n}}\cdot \mathbold{D}_E  =&\,\, c(\mathbold{x},t)/\epsilon \quad \text{on } \Gamma \times I ,\label{eq:normalEInterfCFM}\\
\hat{\mathbold{n}}\cdot \mathbold{D}_H  =&\,\, d(\mathbold{x},t)/\mu \quad \text{on } \Gamma \times I ,\label{eq:normalHInterfCFM}
\end{align}
\end{subequations}
	which determine the correction functions $\mathbold{D}_H = \llbracket \mathbold{H} \rrbracket$ and 
	$\mathbold{D}_E = \llbracket \mathbold{E} \rrbracket$.
Source terms are given by $\mathbold{f}_{D_1} = \mathbold{f}^+_1 - \mathbold{f}^-_1$ and
	$\mathbold{f}_{D_2} = \mathbold{f}^+_2-\mathbold{f}^-_2$. 
Interface conditions \cref{eq:tangentEInterf} to \cref{eq:normalHInterf} 
	become boundary conditions \cref{eq:tangentEInterfCFM} to \cref{eq:normalHInterfCFM} for system \cref{eq:systCFM}. 
	
\begin{remark}
It is worth to mention that system \cref{eq:systCFM} describes the behaviour of jumps 
	(or correction functions) in the magnetic field and 
	the electric field in a general approach.
	Hence, 
		by construction and consistency, 
		derivatives of correction functions $\mathbold{D}_H$ and $\mathbold{D}_E$ satisfy 
		derivative jump conditions \cite{Zhao2004} without explicitly imposing them.
\end{remark}

\subsection{Perturbation Analysis of CF's PDEs for Maxwell's Equations} \label{sec:wellPosedCFM}
In this subsection, 
	a perturbation analysis of the correction functions' system of PDEs coming from Maxwell's equations is investigated using 
	a standard Fourier analysis for initial value problem.
We follow the same procedure described in \cite{Marques2011,Abraham2018}.
The correction function's system of PDEs is not always well-posed.
An example of such a situation is Poisson problems for which the CFM leads to an ill-posed Cauchy problem \cite{Marques2011}.
This could influence the choice of the numerical scheme to be corrected and the construction of the discretization of the correction functions' system of PDEs. 

In the following, 
	we only focus on the first two equations of \cref{eq:systCFM} because divergence constraints are naturally satisfied by an appropriate choice of the functional space in which 
	we minimize the quadratic functional (see \cref{sec:discMaxwellCFM}).
We suppose, 
	without loss of generality, 
	that the interface is flat and is parallel to the $xy$-plane and $\mathbold{x} = 0 \in \Gamma$.
Let us also define the distance $d$ from the interface, 
	which is along the positive part of the $z$-axis in the subdomain $\Omega^+$. 
We therefore have an orthogonal coordinate system $(\mathbold{y},d)$, 
	where $\mathbold{y} = [x, y]^T$ spans the interface and $d=z$. 
Assume that physical parameters are such that $\mu >0$, $\epsilon > 0$ and $\sigma > 0$ and 
	there is no source term.
Consider a periodic domain $\Omega=[-\pi,\pi]^3$, 
	we search solutions for small perturbations of $\mathbold{D}_H$ and $\mathbold{D}_E$ on the interface, 
	namely $\tilde{\mathbold{D}}_H$ and $\tilde{\mathbold{D}}_H$, of the form 
\begin{equation} \label{eq:solFourierSeries}
	\tilde{\mathbold{U}}(\mathbold{x},t) = \sum\limits_{k_x,k_y,k_z \in \mathbb{Z}} \hat{\mathbold{U}}_{k_x,k_y,k_z}(t)\,e^{i\,\mathbold{k}\cdot\mathbold{x}},
\end{equation}
	where $\tilde{\mathbold{U}} = \big[ \tilde{\mathbold{D}}_H^T \;\; \tilde{\mathbold{D}}_E^T\big]^T$ and $\mathbold{k} = [k_x,k_y,k_z]^T$.
Substitute \cref{eq:solFourierSeries} into the first two equations of \cref{eq:systCFM} with $\mathbold{f}_{D_1} = \mathbold{f}_{D_2} = 0$ leads to a system of ordinary differential equations (ODE) for each coefficient, 
	given by : 
\begin{equation*}
	\partial_t \hat{\mathbold{U}}_{k_x,k_y,k_z} = A\,\hat{\mathbold{U}}_{k_x,k_y,k_z}
\end{equation*}
	with 
\begin{equation*}
A = 
	\begin{bmatrix} 
		0 & 0 & 0& 0 & i\,k_z/\mu & -i\,k_y/\mu \\
		0 & 0 & 0 & -i\,k_z/\mu & 0 & i\,k_x/\mu \\
		0 & 0 & 0 & i\,k_y/\mu & -i\,k_x/\mu & 0 \\
		0 & -i\,k_z/\epsilon & i\,k_y/\epsilon & -\sigma/\epsilon & 0 & 0 \\
		i\,k_z/\epsilon & 0 & -i\,k_x/\epsilon & 0 & -\sigma/\epsilon & 0 \\
		-i\,k_y/\epsilon & i\,k_x/\epsilon & 0 & 0 & 0 & -\sigma/\epsilon
	\end{bmatrix}.
\end{equation*}
Depending on the values of $\mathbold{k}\cdot\mathbold{k}$, 
	we have three cases:
\begin{itemize}
	\item[1)] If $\mathbold{k}\cdot\mathbold{k} = 0$, 
		we have $\mathbold{k} = 0$ and the matrix $A$ has two distinct eigenvalues 
		$\lambda_1 = 0$ and $\lambda_2 =-\tfrac{\sigma}{\epsilon}$.
		It is easy to show that $\dim (\ker(A-\lambda_i\,I)) = 3$ for $i = 1,2$, 
		and that $B = [ \mathbold{s}_1 \dots \mathbold{s}_6] = I$,
		where $\mathbold{s}_j$ for $j=1,\dots,6$ denotes an eigenvector. 
		Hence, 
		we have six linearly independent eigenvectors.
	
	\item[2)] If $\mathbold{k}\cdot\mathbold{k} = \tfrac{\mu\,\sigma^2}{4\,\epsilon}$, 
		the matrix $A$ has three distinct eigenvalues $\lambda_1=0$, 
		$\lambda_2 = -\tfrac{\sigma}{\epsilon}$ and $\lambda_3 = -\tfrac{\sigma}{2\,\epsilon}$.
		We have $\dim (\ker(A-\lambda_1\,I)) =\dim (\ker(A-\lambda_2\,I)) =1$.
		However, 
		the multiplicity of $\lambda_3$ is four,
		but $\dim(\ker(A-\lambda_3\,I))=2$.
		We therefore need to find two other solutions of the form $\mathbold{c} =  \mathbold{s}\,t+\mathbold{b}$ associated with 
		eigenvectors of $\lambda_3$.
		Using a standard method to solve an ODE with multiple eigenvalues, 
			we find 
			$$\det(B) = \frac{\epsilon^2\,\sigma^2\,\mu}{4\,\epsilon\,k_y^2+4\,\epsilon\,k_z^2-\mu\,\sigma^2} \neq 0,$$
			where $B=[\mathbold{s}_1 \dots \mathbold{s}_4 \;\; \mathbold{c}_1 \;\; \mathbold{c}_2]$.		
	
	\item[3)] Otherwise, 
		the matrix $A$ has four distinct eigenvalues given by $\lambda_1 = 0$, 
		$\lambda_2 = -\tfrac{\sigma}{\epsilon}$ and 
		$$\lambda_{3,4} = \frac{-\sigma\,\mu \pm \sqrt{\mu(4\,\epsilon\,\mathbold{k}\cdot\mathbold{k} - \mu\,\sigma^2)}\,i}{2\,\epsilon\,\mu}.$$
		We have $\dim(\ker(A-\lambda_i)) = 1$ for $i=1,2$, and $\dim(\ker(A-\lambda_i)) = 2$ for $i=3,4$. 
		A direction computation of $\det(B)$ shows that we have six linearly independent eigenvectors.
\end{itemize} 
For all cases, 
	it is possible to obtain a general solution of the form
\begin{equation*}
	\hat{\mathbold{U}}_{k_x,k_y,k_z}(t) = \sum_{i} \mathbold{a}_i\,e^{\lambda_it},
\end{equation*}
	where the vectors $\mathbold{a}_i$ are computed using given initial conditions of small perturbations and eigenvectors.
Since $\sigma>0$ and $\epsilon>0$, 
	there is no exponential growth of the form $e^{a\,t}$ with $a>0$.
Hence, 
	the problem coming from the first two equations of \cref{eq:systCFM} does not allow  perturbations to growth. 
A perturbation of $\mathbold{D}_H$ and $\mathbold{D}_E$ on the interface $\Gamma$ is therefore unchanged, 
	dispersed and/or diffused.
Hence, 
	this allows us to have more flexibility on the discretization of the correction functions' system of PDEs 
	(see \cref{sec:discMaxwellCFM}) and the choice of an appropriate numerical scheme.

\begin{remark}
For highly resistive medium, 
	it is common to consider $\sigma = 0$.
In this case, 
	if $\mathbold{k}\cdot\mathbold{k}\neq0$,
	the matrix $A$ has three distinct eigenvalues $\lambda_1 = 0$ and $$\lambda_{2,3} = \pm\sqrt{\frac{\mathbold{k}\cdot\mathbold{k}}{\epsilon\,\mu}}\,i.$$
Following the same procedure than the one for $\sigma>0$, 
	we find that the problem coming from the first two equations of \cref{eq:systCFM} does not allow perturbations to growth.
\end{remark}

\subsection{Discretization of Maxwell's equations CFM} \label{sec:discMaxwellCFM}
In this subsection, 
	we define a local patch $\Omega^{h}_{\Gamma} \subset \Omega_{\Gamma}$ and 
	a time interval $I_\Gamma^h = [t_n-\Delta t_\Gamma, t_n]$, 
	where correction functions,
	namely $\mathbold{D}_H$ and $\mathbold{D}_E$,
	need to be computed at a node $(\mathbold{x},t) \in \Omega^{h}_{\Gamma}\times I_\Gamma^h$.
Approximations of correction functions within a 
	patch are obtained by minimizing a quadratic functional. 

The construction of a patch is a slight modification of the \enquote{Node Centered} technique \cite{Marques2011}. 
It is recalled that the correction functions' system of PDEs for Maxwell's equations does not allow perturbations to growth. 
Hence, 
	some restrictions on the construction of the local patch are loosened, 
	such as the size of the patch and the representation of the interface within the patch. 
As in the \enquote{Node Centered} approach, 
	we construct a patch for each node that needs to be corrected. 
However, 
	we restrict the patch to be squared and aligned with the computational grid. 
We now summarize the procedure to compute $\Omega^{h}_{\Gamma}$.
For a given node $\mathbold{x}_c$ that needs to be corrected, 
	we find an approximation of the point $\mathbold{p}$ on the interface $\Gamma$ that is the closest to $\mathbold{x}_c$.  
We construct a square centered at $\mathbold{p}$ of length $\ell_h = \beta\,\max\{\Delta x,\Delta y,\Delta z\}$ where $\beta$ is a positive constant.
The parameter $\beta$ depends on the FD scheme and it is chosen
	to ensure that $\mathbold{x}_c \in \Omega^{h}_{\Gamma}$. 
For exemple, 
	$\beta = 1$ and $\beta = 3$ for respectively the second and the fourth order staggered FD scheme presented in \cref{sec:2D}.
This construction of the patch guarantees the uniqueness of a correction function at each node.
This is important for the conservation of the discrete divergence constraint for some nodes close to $\Gamma$ 
	(see \cref{thm:discreteDivergenceFreeWithCFMPart1}).
	
Let us now present the functional to be minimized in order to obtain approximations of correction functions.
We begin by introducing some notations. 		
The inner product in $L^2\big(\Omega_{\Gamma}^{h}\times I_\Gamma^h\big)$ is defined by
$$\langle\mathbold{v},\mathbold{w}\rangle = \int\limits_{I_\Gamma^h}\int\limits_{\Omega_{\Gamma}^{h}}\!\!\mathbold{v}\cdot\mathbold{w}\,\mathrm{d}V\,\mathrm{d}t.$$ 
For legibility, 
	we also use the notation  
	$$\langle \mathbold{v},\mathbold{w} \rangle_{\Gamma} = \int\limits_{I_\Gamma^h}\int\limits_{\Omega_{\Gamma}^{h}\cap\Gamma}\!\!\mathbold{v}\cdot\mathbold{w} \, \mathrm{d}S\,\mathrm{d}t.$$ 
To compute approximations of correction functions $\mathbold{D}_H$ and $\mathbold{D}_E$, 
	we consider the following quadratic functional to minimize
\begin{equation*}
\begin{aligned}
J(\mathbold{D}_H,&\mathbold{D}_E) = \frac{\ell_c}{2} \, \big\langle\mu\,\partial_t \mathbold{D}_H + \nabla\times\mathbold{D}_E - \mathbold{f}_{D_1},\mu\,\partial_t \mathbold{D}_H + \nabla\times\mathbold{D}_E - \mathbold{f}_{D_1}\big\rangle \\
+&\,\, 
\frac{\ell_c}{2} \, \langle \epsilon\,\partial_t \mathbold{D}_E - \nabla\times\mathbold{D}_H + \sigma\,\mathbold{D}_E - \mathbold{f}_{D_2},\epsilon\,\partial_t \mathbold{D}_E - \nabla\times\mathbold{D}_H + \sigma\,\mathbold{D}_E - \mathbold{f}_{D_2} \big\rangle\\
+&\,\, \frac{1}{2} \,\big\langle \hat{\mathbold{n}}\times\mathbold{D}_H - \mathbold{b},\hat{\mathbold{n}}\times\mathbold{D}_H - \mathbold{b}\big\rangle_{\Gamma} + \frac{1}{2}\,\big\langle\hat{\mathbold{n}}\cdot \mathbold{D}_H - \tfrac{d}{\mu}, \hat{\mathbold{n}}\cdot \mathbold{D}_H - \tfrac{d}{\mu}\big\rangle_{\Gamma}\\
+&\,\, \frac{1}{2} \, \big\langle\hat{\mathbold{n}}\times\mathbold{D}_E - \mathbold{a},\hat{\mathbold{n}}\times\mathbold{D}_E - \mathbold{a}\big\rangle_{\Gamma}+\frac{1}{2} \, \big\langle\hat{\mathbold{n}}\cdot \mathbold{D}_E - \tfrac{c}{\epsilon},\hat{\mathbold{n}}\cdot \mathbold{D}_E - \tfrac{c}{\epsilon} \big\rangle_{\Gamma} ,
\end{aligned}
\normalsize
\end{equation*}
	where $\ell_c>0$ is a scale factor.
The scale factor $\ell_c$ is chosen to ensure that all terms in the functional $J$ behave in a similar way when the 
	computational grid is refined (see \cref{rem:scaleFactor}).
As one can observe,
	we do not explicitly consider the divergence-free constraint \cref{eq:divECFM} and \cref{eq:divHCFM}. 
These constraints are naturally satisfied by an appropriate choice of polynomial spaces in which we minimize 
	the functional $J$.
The problem statement is then 
\begin{equation} \label{eq:minPblm}
\text{Find } (\mathbold{D}_H,\mathbold{D}_E) \in V \times W \text{ such that } (\mathbold{D}_H,\mathbold{D}_E) \in  \underset{\mathbold{v}\in V, \mathbold{w} \in W}{\arg\min}J(\mathbold{v},\mathbold{w}),
\end{equation}
	where $V$ and $W$ are two divergence-free polynomial spaces that is 
\begin{equation*}
	V = \big\{ \mathbold{v} \in \big[P^k\big(\Omega_{\Gamma}^{h}\times I_\Gamma^h \big)\big]^3 : \nabla\cdot\mathbold{v} = 0 \big\},
\end{equation*}
	where $P^k$ denotes the space of polynomials of degree $k$, 
	and $V=W$.
Space-time basis functions of $V$ are obtained using the tensor product between basis functions of $P^k(I_\Gamma^h)$ 
	and basis functions of 
$$\tilde{V} = \big\{ \mathbold{v} \in \big[P^k\big(\Omega_{\Gamma}^{h}\big)\big]^3 : \nabla\cdot\mathbold{v} = 0\big\}.$$

Computing Gateaux derivatives and using a necessary condition to obtain a minimum, 
	we have the following problem : \\
	
Find $(\mathbold{D}_H,\mathbold{D}_E) \in V\times W$ such that
\small
\begin{equation*}
\left\{
\begin{aligned}
\ell_c\,\big\langle\mu^2\,\partial_t \mathbold{D}_H &+ \mu\,\nabla\times\mathbold{D}_E - \mu\,\mathbold{f}_{D_1},\partial_t\mathbold{v}\big\rangle - \ell_c\,\big\langle\epsilon\,\partial_t \mathbold{D}_E + \nabla\times\mathbold{D}_H- \sigma\,\mathbold{D}_E + \mathbold{f}_{D_2},\nabla\times\mathbold{v}\big\rangle \\
&+ \big\langle\hat{\mathbold{n}}\times\mathbold{D}_H-\mathbold{b}, \hat{\mathbold{n}}\times\mathbold{v}\big\rangle_{\Gamma} + \big\langle\hat{\mathbold{n}}\cdot\mathbold{D}_H - \tfrac{d}{\mu}, \hat{\mathbold{n}}\cdot\mathbold{v}\big\rangle_{\Gamma} = 0, \qquad \forall \mathbold{v} \in V, \\
\ell_c\,\big\langle\mu\,\partial_t\mathbold{D}_H &+ \nabla\times\mathbold{D}_E - \mathbold{f}_{D_2}, \nabla\times\mathbold{w}\big\rangle + \ell_c\,\big\langle\epsilon^2\,\partial_t \mathbold{D}_E-\epsilon\,\nabla\times\mathbold{D}_H + \epsilon\,\sigma\,\mathbold{D}_E - \epsilon\,\mathbold{f}_{D_2},\partial_t\mathbold{w}\big\rangle\\
&+\big\langle\sigma\,\epsilon\,\partial_t \mathbold{D}_E-\sigma\,\nabla\times\mathbold{D}_H+\sigma^2\,\mathbold{D}_E-\sigma\,\mathbold{f}_{D_2},\mathbold{w}\big\rangle\\
&+\big\langle\hat{\mathbold{n}}\times\mathbold{D}_E-\mathbold{a},\hat{\mathbold{n}}\times\mathbold{w}\big\rangle_{\Gamma} +\big\langle\hat{\mathbold{n}}\cdot\mathbold{D}_E-\tfrac{c}{\epsilon},\hat{\mathbold{n}}\cdot\mathbold{w}\big\rangle_{\Gamma} = 0, \quad\;\,\, \forall \mathbold{w}\in W.
\end{aligned}
\right.
\end{equation*}
\normalsize
\begin{remark}
For simplicity, 
	consider the 1-D version of system \cref{eq:systCFM} with $\sigma = 0$, 
	$\rho=0$ and without source term, 
	it can be shown that the information is propagated at a speed of $\frac{1}{\sqrt{\epsilon\,\mu}}$ as 
	it is well-known for homogeneous Maxwell's equations.
This gives us an insight on how to choose an appropriate time step $\Delta t_\Gamma$ for the CFM. 
For the general case, 
	we choose $\Delta t_\Gamma\approx \sqrt{\epsilon\,\mu}\,\ell_h$ 
	to allow information coming from the interface $\Gamma$ to propagate in the whole local patch $\Omega_\Gamma^h$.
\end{remark}
\begin{remark} \label{rem:scaleFactor}
Consider a square patch of length $\ell_h$ and $\Delta t_\Gamma = \mathcal{O}(\ell_h)$. 
Using discrete polynomial spaces $P^k$, 
	correction functions are $(k+1)$-order accurate and we have 
\begin{equation*}
\begin{aligned}
\mu\,\partial_t \mathbold{D}_H + \nabla\times\mathbold{D}_E - \mathbold{f}_{D_1} =&\,\, \mathcal{O}(\ell_h^k), \\
\epsilon\,\partial_t \mathbold{D}_E - \nabla\times\mathbold{D}_H + \sigma\,\mathbold{D}_E - \mathbold{f}_{D_2} =&\,\, \mathcal{O}(\ell_h^k), \\
\hat{\mathbold{n}}\times\mathbold{D}_E - \mathbold{a}=&\,\, \mathcal{O}(\ell_h^{k+1}), \\
\hat{\mathbold{n}}\times\mathbold{D}_H - \mathbold{b} =&\,\, \mathcal{O}(\ell_h^{k+1} ), \\
\hat{\mathbold{n}}\cdot \mathbold{D}_E - c/\epsilon=&\,\, \mathcal{O}(\ell_h^{k+1}), \\
\hat{\mathbold{n}}\cdot \mathbold{D}_H - d/\mu =&\,\, \mathcal{O}(\ell_h^{k+1}).
\end{aligned}
\end{equation*}
Substituting these terms in the functional $J$, 
	we find that the terms $\langle\cdot,\cdot\rangle$ and $\langle\cdot,\cdot\rangle_{\Gamma}$ behave respectively 
	as $\mathcal{O}(\ell_c\,\ell_h^{2\,k+4})$ and $\mathcal{O}(\ell_h^{2\,k+5})$.
Hence, 
	we need $\ell_c = \ell_h$ to have all terms converging in a similar way when the computational grid is refined. 
\end{remark}
\begin{remark}
The computational cost of minimization problems for the CFM is not small. 
However, 
	only nodes around the interface need a correction. 
Assuming an uniform mesh of $N^d$ nodes, 
	where $d$ is the dimension and $N$ is the number of nodes used in each dimension,  
	the computational cost scales as $N^{d-1}$ \cite{Marques2011}. 
For large problems, 
	this cost then becomes less significant.
Moreover, 
	it has been shown that a parallel implementation of the CFM can help to overcome this issue \cite{Abraham2017} 
	and make the CFM suitable for more complex problems.
\end{remark}
\begin{remark}
In this work, 
	2-D numerical examples are investigated.
We use a similar procedure proposed by \cite{Cockburn2004} to generate basis functions of $\tilde{V}$. 
Besides being at divergence-free, 
	the dimension of $\tilde{V}$,
	given by $\frac{(k+1)(k+4)}{2}$, 
	is smaller than the dimension of $[P^k\big(\Omega_{\Gamma}^{h}\big)\big]^2$ given by $(k+1)(k+2)$.
This reduces the computational cost of the CFM.
\end{remark}


\section{$2$-D Staggered Discretization}
\label{sec:2D}

Considering the transverse magnetic (TM$_{\text{z}}$) mode, 
	the unknowns are $H_x(x,y,t)$, $H_y(x,y,t)$ and $E_z(x,y,t)$. 
For a domain $\Omega \subset \mathbb{R}^2$ and constant physical parameters, 
	problem \cref{eq:pblmDefinition} is then simplified to 
\begin{equation*}
	\begin{aligned}
	\mu\,\partial_t H_x + \partial_y E_z =&\,\, f_{1_x} \quad \text{in } \Omega \times I,\\
	\mu\,\partial_t H_y - \partial_x E_z  =&\,\, f_{1_y} \quad \text{in } \Omega \times I,\\
	\epsilon\,\partial_t E_z - \partial_x H_y + \partial_y H_x =&\,\, -\sigma\,E_z + f_2 \quad \text{in } \Omega \times I,\\
	\partial_x H_x + \partial_y H_y =&\,\, 0 \quad \text{in } \Omega \times I,\\
	\end{aligned}
\end{equation*}
	with the associated interface, 
	boundary and initial conditions. 
\begin{remark}
In this work, 
	we demonstrate the feasibility of the numerical strategy in 2-D using the TM$_{\text{z}}$ mode.  
From a conceptual point of view, 
	there is, 
	in principle,
	no additional difficulties if one chooses the transverse electric (TE$_{\text{z}}$) mode or a fully 3-D problem as long as $\rho=0$. 
However, 
	the implementation for a fully 3-D problem is more involved due 
	to the treatment of the interface which is a surface in 3-D.
It is worth noting that recent progress has been made to ease the implementation of the CFM in 3-D \cite{Marques2018_3D}.
\end{remark}

\subsection{Numerical Scheme} \label{sec:numScheme}

Let us now define the staggered space discretization which is similar to what is done in space for Yee's scheme.
For simplicity, 
	we consider a rectangular domain $\Omega \in [x_{\ell},x_r]\times[y_{b},y_{t}]$.
The nodes of the grid are defined as 
\begin{equation*}
(x_{i+1/2},y_{j+1/2}) = \big(x_{\ell} + i\,\Delta x,y_{b} +j\,\Delta y\big)
\end{equation*}
	for $i=0,1,\dots,N_x$ and $j=0,1,\dots,N_y$ with $\Delta x := (x_r-x_{\ell})/N_x$ and $\upDelta y := (y_t-y_{b})/N_y$.
We also define the center of a cell
	$\Omega_{i,j} = [x_{i-1/2},x_{i+1/2}]\times[y_{j-1/2},y_{j+1/2}]$
	by
\begin{equation*}
        (x_i,y_i) = \big(x_{\ell}+(i-\tfrac{1}{2})\,\Delta x, y_{b}+(j-\tfrac{1}{2})\,\Delta y\big)
\end{equation*}
	for $i = 1, \dots, N_x$ and for $j = 1, \dots, N_y$.
The midpoints of edges parallel to the $x$-axis and those parallel to the $y$-axis are respectively defined as 
\begin{equation*}
	(x_i,y_{j+1/2}) = \big(x_{\ell}+(i-\tfrac{1}{2})\,\Delta x, y_{b}+j\,\Delta y\big)
\end{equation*}
	for $i = 1, \dots, N_x$ and for $j = 0, \dots, N_y$,
	and
\begin{equation*}
	(x_{i+1/2},y_j) = \big(x_{\ell}+i\,\Delta x, y_{b}+(j-\tfrac{1}{2})\,\Delta y\big)
\end{equation*}
	for $i = 0, \dots, N_x$ and for $j = 1, \dots, N_y$.
For time discretization, 
	the time interval $I=[0,T]$ is subdivided into $N_t$ subintervals of length $\Delta t := T/N_t$.
Unlike the space discretization, 
	we do not staggered variables in time. 
The components of the magnetic field are then approximated at the edges of the cell,
 	that is 
	$$H_x(x_i,y_{j+1/2},t_n) \approx H_{x,i,j+1/2}^n$$ and 
	$$H_y(x_{i+1/2},y_j,t_n) \approx H_{y,i+1/2,j}^n,$$
	and the $z$-component of the electric field is approximated at the center of the cell 
	$$E_z(x_i,y_j,t_n) \approx E_{z,i,j}^n.$$
	
The spatial derivatives are computed using either the second or fourth order centered approximation. 
For example, 
	the fourth-order centered approximation of $\partial_x H_y(x_i,y_j,t_n)$ is given by
\begin{equation} \label{eq:fourthOrderDerivative}
	\frac{H_{y,i-3/2,j}^n -27\,H_{y,i-1/2,j}^n+27\,H_{y,i+1/2,j}^n-H_{y,i+3/2,j}^n}{24\,\Delta x}.
\end{equation}

For time discretization, 
	we use the fourth-order Runge-Kutta (RK4) method,
	which is given by 
\begin{equation} \label{eq:RK4Method}
	\mathbold{U}^{n+1} = \mathbold{U}^n + \frac{1}{6}\,(\mathbold{k}_1 + 2\,\mathbold{k}_2 + 2\,\mathbold{k}_3 + \mathbold{k}_4),
\end{equation}
	with $\mathbold{U}^n = [H_x^n, H_y^n, E_z^n]^T$,
\begin{equation*}
\begin{aligned}
	\mathbold{k}_1 =&\,\, \Delta t \, \mathbold{G}(t_n,\mathbold{U}^n), \\
	\mathbold{k}_2 =&\,\, \Delta t \, \mathbold{G}(t_n+\tfrac{\Delta t}{2},\mathbold{U}^n+\tfrac{\mathbold{k}_1}{2}), \\
	\mathbold{k}_3 =&\,\, \Delta t \, \mathbold{G}(t_n+\tfrac{\Delta t}{2},\mathbold{U}^n+\tfrac{\mathbold{k}_2}{2}), \\
	\mathbold{k}_4 =&\,\, \Delta t \, \mathbold{G}(t_n+\Delta t,\mathbold{U}^n+\mathbold{k}_3), 
\end{aligned}
\end{equation*}
	and 
\begin{equation} \label{eq:GRK4}
	\mathbold{G}(t_n,\mathbold{U}^n) = 
	\begin{bmatrix}
	\tfrac{1}{\mu}\,(f_{1_x}^n - \partial_{y_h}E_z^n) \\ 
	\tfrac{1}{\mu}\,(f_{1_y}^n + \partial_{x_h}E_z^n) \\
	-\sigma\,E_z^n + f_2^n + \partial_{x_h} H_y^n - \partial_{y_h} H_x^n
	\end{bmatrix},
\end{equation}
	where the subscript $h$ in spatial derivatives denotes a given finite difference approximation of them in $\Omega$.
Let us now consider a FD approximation of spatial derivatives for which we apply correction functions, that is
	$D_{H_x}$, 
	$D_{H_y}$ and $D_{E_z}$.
It has been shown that a direct interpolation of approximations of correction functions at times $t_n$, $t_{n+1/2}$ and $t_{n+1}$,
	which are needed for different stages of the RK4 method,
	results in a suboptimal second-order accurate approximation in time. 
As proposed in \cite{Abraham2018},
	we need to slightly modify an approximation of a correction function to regain a full fourth-order approximation in time.
Based on Taylor expansions, 
	the modified approximations of correction functions at each stage are 
\begin{equation*}
\begin{aligned}
	1 \text{st stage} :&\,\, \qquad \hat{\mathbold{D}}_1^n = \mathbold{D}^n, \\
	2 \text{nd stage} :&\,\, \qquad \hat{\mathbold{D}}_2^{n} \approx \mathbold{D}^n + \tfrac{\Delta t}{2}\,\partial_t \mathbold{D}^n, \\
	3 \text{rd stage} :&\,\, \qquad \hat{\mathbold{D}}_3^{n} \approx \mathbold{D}^n + \tfrac{\Delta t}{2}\,\partial_t \mathbold{D}^n  + \tfrac{\Delta t^2}{4}\,\partial_t^2 \mathbold{D}^n,\\
	4 \text{th stage} :&\,\, \qquad \hat{\mathbold{D}}_4^{n}\approx \mathbold{D}^n + \Delta t\,\partial_t \mathbold{D}^n  + \tfrac{\Delta t^2}{2}\,\partial_t^2 \mathbold{D}^n + \tfrac{\Delta t^3}{4}\,\partial_t^3 \mathbold{D}^n,
	\end{aligned}
	\end{equation*}
	where $\mathbold{D}^n = [D_{H_x}^n, D_{H_y}^n, D_{E_z}^n]^T$. 
Time derivatives of a correction function can be computed directly using their polynomial approximations 
	coming from the minimization problem \cref{eq:minPblm}.
\begin{remark} 
It is worth mentioning that correction functions can be seen as additional source terms. 
Hence, 
	the stability condition of an original FD scheme should remain the same when the 
	CFM is used if correction functions are bounded \cite{Abraham2018}.  
This observation has been corroborated by numerical experiments in \cite{Abraham2018} for the wave equation. 
In our case, 
	the assumption of bounded correction functions is reasonable because 
	the correction functions' system of PDEs for Maxwell's equations do not allow perturbations to growth (see \cref{sec:wellPosedCFM}).
\end{remark}

\subsection{Truncation Error Analysis}
\label{sec:TruncationErrAnalysis}
	
In this short subsection, 
	we study the impact of an approximation of a correction function on 
	a finite difference scheme. 
As shown in \cref{lem:errorCFM}, 
	the error associated with an approximation of a correction function 
	coming from the minimization problem \cref{eq:minPblm} can reduce the order of 
	an original finite difference scheme, 
	that is without correction.

\begin{lemma}   \label{lem:errorCFM}
Let us consider a domain $\Omega$ subdivided into two subdomains $\Omega^+$ and $\Omega^-$ 
	for which the interface $\Gamma$
	between subdomains allows the solution $A(x)$ to be discontinuous. 
	Assume that there is sufficiently smooth extensions of $A(x)$ in each subdomain, 
	namely $A^+(x)$ and $A^-(x)$.
Moreover, 
	assume that an approximation of the correction function $D$ is $p$-order accurate and
	the fourth-order centered FD scheme, 
	namely
\begin{equation} \label{eq:thmErrorFDScheme}
	\partial_x A_{i} =  \frac{A_{i-3/2} -27\,A_{i-1/2}+27\,A_{i+1/2}-A_{i+3/2}}{24\,\Delta x}.
\end{equation}
The order of the fourth-order centered FD scheme when a correction is 
	applied is $q=\min\{p-1,4\}$.	
\end{lemma}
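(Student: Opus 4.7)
The plan is to split the truncation error of the corrected FD formula into two pieces, the underlying finite difference error on a smooth extension and the perturbation introduced by using approximate rather than exact correction values, and to estimate each separately.

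First I would fix a representative configuration in which the stencil for $\partial_x A^+(x_i)$ crosses $\Gamma$, say $x_{i-3/2},x_{i-1/2},x_i\in\Omega^+$ and $x_{i+1/2},x_{i+3/2}\in\Omega^-$; the other configurations are handled identically by symmetry. Let $\tilde{D}_{j}$ denote the computed correction at $x_j$ (an output of the minimization~\cref{eq:minPblm}) and $D_j=A^+(x_j)-A^-(x_j)$ the exact jump. The corrected scheme at $x_i$ reads
\begin{equation*}
\partial_x A_i^{\mathrm{corr}} = \frac{A^+_{i-3/2} -27\,A^+_{i-1/2}+27\,\bigl(A^-_{i+1/2}+\tilde D_{i+1/2}\bigr)-\bigl(A^-_{i+3/2}+\tilde D_{i+3/2}\bigr)}{24\,\Delta x}.
\end{equation*}
Writing $A^-_j+\tilde D_j = A^+_j+(\tilde D_j-D_j)$ for the two right-side nodes, I would decompose
\begin{equation*}
\partial_x A_i^{\mathrm{corr}} - \partial_x A^+(x_i) = \underbrace{\bigl[\text{standard 4th-order stencil for }A^+\bigr] - \partial_x A^+(x_i)}_{=:E_1} + \underbrace{\frac{27(\tilde D_{i+1/2}-D_{i+1/2})-(\tilde D_{i+3/2}-D_{i+3/2})}{24\,\Delta x}}_{=:E_2}.
\end{equation*}

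Next, the term $E_1$ is precisely the truncation error of the fourth-order centered difference~\cref{eq:thmErrorFDScheme} applied to the smooth extension $A^+$. Since $A^+$ is assumed sufficiently smooth across the stencil, a standard Taylor expansion gives $E_1 = \mathcal{O}(\Delta x^4)$. For $E_2$, the hypothesis that the correction function approximation is $p$-order accurate means $\tilde D_j-D_j=\mathcal{O}(\Delta x^p)$ at each corrected node (recall the patch length satisfies $\ell_h=\mathcal{O}(\Delta x)$, so convergence in $\ell_h$ translates into convergence in $\Delta x$). Since the stencil touches only a fixed number of corrected nodes with coefficients of size $\mathcal{O}(1)$, dividing by $24\,\Delta x$ yields
\begin{equation*}
E_2 = \mathcal{O}(\Delta x^{p-1}).
\end{equation*}

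Combining the two contributions, the total truncation error is $\mathcal{O}\bigl(\Delta x^{4}+\Delta x^{p-1}\bigr)=\mathcal{O}\bigl(\Delta x^{\min\{p-1,4\}}\bigr)$, which gives $q=\min\{p-1,4\}$. The only subtle point, and what I would treat most carefully, is the bookkeeping that the number of corrected stencil nodes is bounded independently of $\Delta x$ and that each contributes a correction error of order $\Delta x^p$; the rest is essentially the standard truncation analysis of the centered fourth-order difference plus the $1/\Delta x$ amplification of the correction error.
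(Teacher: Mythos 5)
Your proof is correct and follows essentially the same route as the paper's: both arguments rest on the observation that replacing $A^-$ by $A^- + D$ with the \emph{exact} jump recovers the smooth extension $A^+$ (so the standard $\mathcal{O}(\Delta x^4)$ truncation analysis applies), while the $\mathcal{O}(\Delta x^p)$ error in the computed correction is amplified by the $1/\Delta x$ in the stencil to give the $\mathcal{O}(\Delta x^{p-1})$ term. Your explicit $E_1 + E_2$ decomposition is a slightly cleaner bookkeeping of what the paper does inside its Taylor expansion \cref{eq:thmErrorEq1}, and your configuration with two corrected nodes versus the paper's one is an immaterial difference.
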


\begin{proof}
Consider that the fourth-order centered FD scheme \cref{eq:thmErrorFDScheme} 
	involves approximations of $A$ that belongs to different subdomains. 
For simplicity and without loss of generality, 
	suppose that $x_i \in \Omega^+$ and only one node belongs to the domain 
	$\Omega^-$, 
	that is $x_{i+1/2} \in \Omega^-$ and $x_{i-3/2},x_{i-1/2},x_{i+3/2} \in \Omega^+$.
Hence, 
\begin{equation} \label{eq:thmErrorFDCFM}
\partial_x A_i^+ =  \frac{A_{i-3/2}^+ -27\,A_{i-1/2}^++27\,(A_{i+1/2}^-+D_{i+1/2})-A_{i+3/2}^+}{24\,\Delta x},
\end{equation}
	where $D_{i+1/2}$ is 
	an approximation of the correction function evaluated at $x_{i+1/2}$. 
Since the approximation of the correction function is $p$-order accurate,   
\begin{equation*}
	D(x_{i+1/2}) = D_{i+1/2} + \mathcal{O}(\Delta x^p).
\end{equation*} 
Using appropriate Taylor's expansions about $x_i$ of $A_{i+1/2}^-$ and $D(x_{i+1/2})$, 
	we find 
\begin{equation} \label{eq:thmErrorEq1}
	\begin{aligned}
	A_{i+1/2}^-+D_{i+1/2} =&\,\, A_{i+1/2}^- + D(x_{i+1/2}) + \mathcal{O}(\Delta x^p) \\
	=&\,\, \displaystyle\sum_{j=0}^\infty \frac{1}{2^j\,j!} \Big(\partial_x^{(j)}A^-(x_i)+\partial_x^{(j)}D(x_i)\Big)\,\Delta x^j + \mathcal{O}(\Delta x^p) \\
	 =&\,\, \displaystyle\sum_{j=0}^\infty \frac{1}{2^j\,j!} \partial_x^{(j)}A^+(x_i)\,\Delta x^j + \mathcal{O}(\Delta x^p).
	\end{aligned}
\end{equation}
Using \cref{eq:thmErrorEq1} and performing a standard Taylor's expansion of \cref{eq:thmErrorFDCFM} about $x_i$,
	we find 
\begin{equation*}
	\partial_xA_i^+ = \partial_x A^+(x_i) + \mathcal{O}(\Delta x^4 + \Delta x^{p-1}).
\end{equation*}
\phantom{a}
\end{proof}

\subsection{Discrete Divergence Constraint}
\label{sec:divergence}
In this subsection, 
	we discuss about the conservation of the discrete divergence of the finite difference scheme,
	presented in \cref{sec:numScheme},
	combined with the CFM. 
We first show that the standard FD scheme preserves the divergence of the initial data at the discrete level. 
Secondly, 
	we show that the discrete divergence is still conserved for the FD scheme when combined with the CFM 
	except for some nodes close to the interface.
	
A common second-order discrete approximation of the divergence of a 2-D vector field is computed using 
\begin{equation} \label{eq:secondOrderDiscreteDivergence}
\big(\nabla\cdot\mathbold{A}\big)_{i+1/2,j+1/2}^{n} := 
                \frac{A_{x,i+1,j+1/2}^{n} - A_{x,i,j+1/2}^{n}}{\upDelta x} +
                 \frac{A_{y,i+1/2,j+1}^{n} - A_{y,i+1/2,j}^{n}}{\upDelta y},
\end{equation}
	where $A_x(x,y,t)$ and $A_y(x,y,t)$ \cite{Toth2000}. 
We also introduce the centered fourth-order discrete approximation of the divergence, 
	given by
\begin{equation} \label{eq:fourthOrderDiscreteDivergence}
	\begin{aligned}
	\big(\tilde{\nabla}\cdot\mathbold{A}\big)_{i+1/2,j+1/2}^{n} :=&\,\,
                \tfrac{A_{x,i-1,j+1/2}^n-27\,A_{x,i,j+1/2}^n+27\,A_{x,i+1,j+1/2}^n-A_{x,i+2,j+1/2}^{n}}{24\,\upDelta x} \\
                + & 
                \tfrac{A_{y,i+1/2,j-1}^n-27\,A_{y,i+1/2,j}^n+27\,A_{y,i+1/2,j+1}^n-A_{y,i+1/2,j+2}^{n}}{24\,\upDelta y}\,,
         \end{aligned}
\end{equation}
	which is better suited for the fourth-order centered scheme.

For the TM$_{\text{z}}$ mode,
	we remark that the $z$-component of the electric field $E_z(x,y,t)$ is at divergence-free. 
We then focus on the magnetic field. 
The following lemma shows that the standard staggered finite difference scheme combined with the RK4 time-stepping method
	 preserves the discrete divergence of the initial data at all later times.

\begin{lemma}   \label{lem:discreteDivergenceFree}
Assume that source terms satisfy 
$$
      \big(\tilde{\nabla}\cdot\mathbold{f}_1\big)_{i+1/2,j+1/2}^{n} = 0,
$$
for all $i,j$ and all $n \geq 0$.
The magnetic field, 
	computed with the standard fourth-order staggered FD scheme combined with the RK4 method, 
	is such that
$$
\big(\tilde{\nabla}\cdot\mathbold{H}\big)_{i+1/2,j+1/2}^{n+1} =\big(\tilde{\nabla}\cdot\mathbold{H}\big)_{i+1/2,j+1/2}^{0}\,,
$$
for all $i,j$ and all $n\geq0$.
\end{lemma}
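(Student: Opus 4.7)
The plan is to argue by induction on $n$, so it suffices to show that applying one RK4 step preserves the discrete divergence of $\mathbold{H}$ at every node. Since each Runge--Kutta stage produces $\mathbold{k}_s = \Delta t \,\mathbold{G}(t_n + c_s \Delta t, \mathbold{U}^n + a_s \mathbold{k}_{s-1})$, and $\mathbold{H}^{n+1} - \mathbold{H}^n$ is the weighted sum $\tfrac{1}{6}(\mathbold{k}_1 + 2\mathbold{k}_2 + 2\mathbold{k}_3 + \mathbold{k}_4)$, by linearity of $\tilde{\nabla}\cdot$ it is enough to prove that each evaluation of the magnetic-field components of $\mathbold{G}$ is discretely divergence-free, i.e.
\begin{equation*}
\big(\tilde{\nabla}\cdot \mathbold{G}_H\big)_{i+1/2,j+1/2}^{n} = 0 \quad \text{for all } i,j,
\end{equation*}
where $\mathbold{G}_H = \tfrac{1}{\mu}(f_{1_x} - \partial_{y_h} E_z,\; f_{1_y} + \partial_{x_h} E_z)$.

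First I would expand $(\tilde{\nabla}\cdot \mathbold{G}_H)_{i+1/2,j+1/2}$ using formula \cref{eq:fourthOrderDiscreteDivergence}. The source-term contribution vanishes by the hypothesis $(\tilde{\nabla}\cdot \mathbold{f}_1)_{i+1/2,j+1/2}^n = 0$. The remaining piece splits as $\tfrac{1}{\mu}\bigl(\partial_{x_h}\partial_{x_h} E_z - \partial_{y_h}\partial_{y_h} E_z\bigr)$, but more precisely it is a fourth-order centered difference in $x$ applied to the fourth-order centered $y$-difference of $E_z$, minus the symmetric combination in the reversed order. Because both operators use the same uniform five-point stencil with coefficients $\tfrac{1}{24 h}(1,-27,27,-1)$ and $E_z$ is evaluated at cell centers, I would invoke the fact that these two discrete derivative operators commute when acting on the same grid function (the composition is just a linear combination of $E_z$ at nodes whose coefficients are symmetric in swapping the roles of the $x$- and $y$-shifts). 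This gives exact cancellation.

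The inductive step then follows: if $(\tilde{\nabla}\cdot \mathbold{H})^n_{i+1/2,j+1/2}$ equals its value at $t=0$ for every grid point, then so does $(\tilde{\nabla}\cdot (\mathbold{H}^n + a_s \mathbold{k}_{s-1}))$, hence each intermediate stage feeds into $\mathbold{G}$ a magnetic field with unchanged discrete divergence, and by the stage-wise argument above $(\tilde{\nabla}\cdot \mathbold{k}_s) = 0$ for $s=1,2,3,4$. Summing with the RK4 weights yields $(\tilde{\nabla}\cdot \mathbold{H})^{n+1}_{i+1/2,j+1/2} = (\tilde{\nabla}\cdot \mathbold{H})^n_{i+1/2,j+1/2}$, and the conclusion follows by induction.

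The main obstacle I anticipate is the bookkeeping for the staggering: one must verify carefully that the nodes entering $\partial_{x_h}(\partial_{y_h} E_z)$ and $\partial_{y_h}(\partial_{x_h} E_z)$ indeed coincide, i.e.\ that $\partial_{y_h} E_z$ (naturally located where $H_x$ lives) can be $x$-differenced by the same stencil that $\partial_{x_h} E_z$ (naturally located where $H_y$ lives) is $y$-differenced, so that the two compositions produce the same linear combination of the $E_z$ values. Once the grid indices are written out and the coefficients matched, cancellation is straightforward. The same argument, mutatis mutandis, handles the second-order formula \cref{eq:secondOrderDiscreteDivergence} paired with the second-order stencil, so the statement extends to that case as well.
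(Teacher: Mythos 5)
Your proposal is correct and follows essentially the same route as the paper: the discrete divergence of the discrete curl of $E_z$ vanishes because the two uniform centered-difference stencils commute, the source contribution vanishes by hypothesis, and linearity of $\tilde{\nabla}\cdot$ applied to the RK4 update gives preservation step by step. (The intermediate expression $\tfrac{1}{\mu}(\partial_{x_h}\partial_{x_h}E_z-\partial_{y_h}\partial_{y_h}E_z)$ is a slip --- it should be the difference of the two mixed discrete derivatives --- but you immediately give the correct description, so the argument stands.)
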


\begin{proof}
The following demonstration is similar to the proof given in \cite{Toth2000}.
For a given time $t_n$, 
	let us consider the two first components of \cref{eq:GRK4}, 
	that is 
$$\mathbold{G}_H(t_n,E_z^n) = 
	\frac{1}{\mu}\,\begin{bmatrix} 
	f_{1_x}^n - \partial_{y_h}E_z^n \\ 
	f_{1_y}^n + \partial_{x_h}E_z^n
	\end{bmatrix},$$
where $\partial_{y_h} \cdot$ and $\partial_{x_h} \cdot$ denote the centered fourth-order approximation 
	\eqref{eq:fourthOrderDerivative}. 
Applying the discrete divergence operator to $\mathbold{G}_H(t_n,E_z^n)$ leads to 
\begin{equation*}
\big(\tilde{\nabla}\cdot\mathbold{G}_H\big)_{i+1/2,j+1/2}^{n} =  \big(\tilde{\nabla}\cdot\mathbold{f}_1\big)_{i+1/2,j+1/2}^{n} + \big(\tilde{\nabla}\cdot\mathbold{A}\big)_{i+1/2,j+1/2}^{n} \,,
\end{equation*}
	where 
\begin{equation*}
\begin{aligned}
A_{x,i,j+1/2}^{n} =&\,\, -\frac{E_{z,i,j-1}^n-27\,E_{z,i,j}^n+27\,E_{z,i,j+1}^n-E_{z,i,j+2}^{n}}{24\,\upDelta y}\,, \\
A_{y,i+1/2,j}^{n} =&\,\, \frac{E_{z,i-1,j}^n-27\,E_{z,i,j}^n+27\,E_{z,i+1,j}^n-E_{z,i+2,j}^{n}}{24\,\upDelta x} \,,
\end{aligned}
\end{equation*}
	which is a fourth-order approximation of the curl of the electric field at cell edges. 
We can easily verify that 
\begin{equation*}
	\big(\tilde{\nabla}\cdot\mathbold{A}\big)_{i+1/2,j+1/2}^{n} = 0\,, \quad \forall i,j,n.
\end{equation*}
Using $\big(\tilde{\nabla}\cdot\mathbold{f}_1\big)_{i+1/2,j+1/2}^{n} = 0\,$, 
	we obtain 
	$$\big(\tilde{\nabla}\cdot\mathbold{G}_H\big)_{i+1/2,j+1/2}^{n} = 0\,,$$
	for all $i,j$ and all $n\geq0$. 
Applying the discrete divergence operator to \eqref{eq:RK4Method}, 
	we find 
	$\big(\nabla\cdot\mathbold{H}\big)_{i+1/2,j+1/2}^{n+1} = \big(\nabla\cdot\mathbold{H}\big)_{i+1/2,j+1/2}^{n}\,$.
Hence, we obtain the desired result.
\end{proof}

Due to possible discontinuities at the interface $\Gamma$, 
	we need to investigate the discrete divergence for nodes that are close to $\Gamma$. 
We distinguish two cases that are illustrated in \cref{fig:thmCases}. 
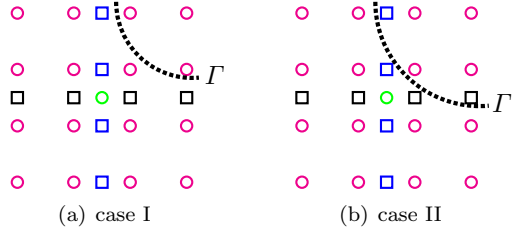
\begin{figure}[htbp]
 	\centering
  \subfigure[case I]{
 	\tdplotsetmaincoords{75}{105}
	\tikzset{external/export next=false}
  \begin{tikzpicture}[scale=0.75]
	\draw[-latex,thick,green] (2.5,2.5) circle [radius=0.1];
	\draw[-latex,thick,blue]  (2.4,3.9) rectangle (2.6,4.1);
	\draw[-latex,thick,blue]  (2.4,2.9) rectangle (2.6,3.1);
	\draw[-latex,thick,blue]  (2.4,1.9) rectangle (2.6,2.1);
	\draw[-latex,thick,blue]  (2.4,0.9) rectangle (2.6,1.1);
	\draw[-latex,thick,black]  (0.9,2.4) rectangle (1.1,2.6);
	\draw[-latex,thick,black]  (1.9,2.4) rectangle (2.1,2.6);
	\draw[-latex,thick,black]  (2.9,2.4) rectangle (3.1,2.6);
	\draw[-latex,thick,black]  (3.9,2.4) rectangle (4.1,2.6);
	\draw[-latex,thick,magenta] (1,4) circle [radius=0.1];
	\draw[-latex,thick,magenta] (1,3) circle [radius=0.1];
	\draw[-latex,thick,magenta] (1,2) circle [radius=0.1];
	\draw[-latex,thick,magenta] (1,1) circle [radius=0.1];
	\draw[-latex,thick,magenta] (2,4) circle [radius=0.1];
	\draw[-latex,thick,magenta] (2,3) circle [radius=0.1];
	\draw[-latex,thick,magenta] (2,2) circle [radius=0.1];
	\draw[-latex,thick,magenta] (2,1) circle [radius=0.1];
	\draw[-latex,thick,magenta] (3,4) circle [radius=0.1];
	\draw[-latex,thick,magenta] (3,3) circle [radius=0.1];
	\draw[-latex,thick,magenta] (3,2) circle [radius=0.1];
	\draw[-latex,thick,magenta] (3,1) circle [radius=0.1];
	\draw[-latex,thick,magenta] (4,4) circle [radius=0.1];
	\draw[-latex,thick,magenta] (4,3) circle [radius=0.1];
	\draw[-latex,thick,magenta] (4,2) circle [radius=0.1];
	\draw[-latex,thick,magenta] (4,1) circle [radius=0.1];	
	\draw[ultra thick,densely dotted,black] (2.75,4.2) arc (180:275:1.35);
  	\draw (4.5,2.9) node {$\Gamma$};
  \end{tikzpicture}
  	}
  \subfigure[case II]{  
 	\tdplotsetmaincoords{75}{105}
	\tikzset{external/export next=false}
  \begin{tikzpicture}[scale=0.75]
	\draw[-latex,thick,green] (2.5,2.5) circle [radius=0.1];
	\draw[-latex,thick,blue]  (2.4,3.9) rectangle (2.6,4.1);
	\draw[-latex,thick,blue]  (2.4,2.9) rectangle (2.6,3.1);
	\draw[-latex,thick,blue]  (2.4,1.9) rectangle (2.6,2.1);
	\draw[-latex,thick,blue]  (2.4,0.9) rectangle (2.6,1.1);
	\draw[-latex,thick,black]  (0.9,2.4) rectangle (1.1,2.6);
	\draw[-latex,thick,black]  (1.9,2.4) rectangle (2.1,2.6);
	\draw[-latex,thick,black]  (2.9,2.4) rectangle (3.1,2.6);
	\draw[-latex,thick,black]  (3.9,2.4) rectangle (4.1,2.6);
	\draw[-latex,thick,magenta] (1,4) circle [radius=0.1];
	\draw[-latex,thick,magenta] (1,3) circle [radius=0.1];
	\draw[-latex,thick,magenta] (1,2) circle [radius=0.1];
	\draw[-latex,thick,magenta] (1,1) circle [radius=0.1];
	\draw[-latex,thick,magenta] (2,4) circle [radius=0.1];
	\draw[-latex,thick,magenta] (2,3) circle [radius=0.1];
	\draw[-latex,thick,magenta] (2,2) circle [radius=0.1];
	\draw[-latex,thick,magenta] (2,1) circle [radius=0.1];
	\draw[-latex,thick,magenta] (3,4) circle [radius=0.1];
	\draw[-latex,thick,magenta] (3,3) circle [radius=0.1];
	\draw[-latex,thick,magenta] (3,2) circle [radius=0.1];
	\draw[-latex,thick,magenta] (3,1) circle [radius=0.1];
	\draw[-latex,thick,magenta] (4,4) circle [radius=0.1];
	\draw[-latex,thick,magenta] (4,3) circle [radius=0.1];
	\draw[-latex,thick,magenta] (4,2) circle [radius=0.1];
	\draw[-latex,thick,magenta] (4,1) circle [radius=0.1];	
	\draw[ultra thick,densely dotted,black] (2.3,4.2) arc (180:275:1.85);
  	\draw (4.55,2.4) node {$\Gamma$};	
  \end{tikzpicture}
  	}
  \caption{Illustration of the two cases for the computation of the centered fourth-order divergence of the magnetic field 
  		around the interface $\Gamma$ (dotted line). 
  		For legibility, we only show nodes involve in \cref{thm:discreteDivergenceFreeWithCFMPart1} and \cref{thm:discreteDivergenceFreeWithCFMPart2} for the computation of the discrete divergence of $\mathbold{H}$ at the node represented by {\color{green}$\circ$}. The components $H_x$, $H_y$ and $E_z$ are respectively represented by {\tiny$\square$}, {\color{blue}\tiny$\square$} and {\color{magenta}$\circ$}.}
\label{fig:thmCases}
\end{figure}
In the first case,
	we consider that the discrete divergence operator involves only components of the magnetic field that belong to the same 
	subdomain.
However, 
	there is no restriction on the electric field. 
In contrast, 
	the second case considers $H_x$ and $H_y$ that belong to different subdomains in the computation of the discrete 
	divergence operator.
In that situation, 
	discrete divergence operators \cref{eq:secondOrderDiscreteDivergence} and \cref{eq:fourthOrderDiscreteDivergence} 
	are not 
	well suited and need to be redefined. 
In the spirit of the CFM, 
	we propose a corrected discrete divergence operator that uses correction functions if it is necessary.
The corrected discrete divergence operator is denoted as either 
	$\big(\nabla^D\cdot\mathbold{A}\big)_{i+1/2,j+1/2}^{n}$ or $\big(\tilde{\nabla}^D\cdot\mathbold{A}\big)_{i+1/2,j+1/2}^{n}$ 
	for respectively the second and fourth order centered approximation. 
The following theorems analyze the discrete divergence of the approximation of $\mathbold{H}$ in both situations.

\begin{theorem}   \label{thm:discreteDivergenceFreeWithCFMPart1}
Under assumptions of \cref{lem:discreteDivergenceFree} and assuming that the approximation of the correction function 
	$\hat{D}_{E_z}$ 
	at each node is unique.
If the computation of $\big(\tilde{\nabla}\cdot\mathbold{H}\big)_{i+1/2,j+1/2}^{\boldsymbol{\circ},n+1}$,
	where the superscript $\boldsymbol{\circ}$ can be either $+$ or $-$ depending in which subdomain ($\Omega^+$ or $\Omega^-$)
	the node $(x_{i+1/2},y_{j+1/2})$ belongs,
	involves only approximations of the magnetic field in the same subdomain,
	then the approximation of $\mathbold{H}$, 
	computed with the fourth-order staggered FD scheme combined with the RK4 method and the CFM, 
	is such that
$$
\big(\tilde{\nabla}\cdot\mathbold{H}\big)_{i+1/2,j+1/2}^{\boldsymbol{\circ},n+1} =\big(\tilde{\nabla}\cdot\mathbold{H}\big)_{i+1/2,j+1/2}^{\boldsymbol{\circ},0}\,,
$$
for all $i,j$ and all $n\geq0$.
\end{theorem}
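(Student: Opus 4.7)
The plan is to mimic the proof of \cref{lem:discreteDivergenceFree} and reduce the claim to showing that, at every stage of the RK4 scheme, $\big(\tilde{\nabla}\cdot\mathbold{G}_H\big)_{i+1/2,j+1/2}^{\boldsymbol{\circ},n} = 0$, where $\mathbold{G}_H$ is now assembled from the \emph{corrected} spatial derivatives of $E_z$. Once this is in hand, applying $\tilde{\nabla}\cdot$ to each stage update of \cref{eq:RK4Method} will yield $\big(\tilde{\nabla}\cdot\mathbold{H}\big)_{i+1/2,j+1/2}^{\boldsymbol{\circ},n+1} = \big(\tilde{\nabla}\cdot\mathbold{H}\big)_{i+1/2,j+1/2}^{\boldsymbol{\circ},n}$, and an induction on $n$ will close the argument.

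To handle the corrected $\mathbold{G}_H$, I would split each corrected derivative $\partial_{y_h}^D E_z$ and $\partial_{x_h}^D E_z$ appearing in $\mathbold{G}_H$ as an uncorrected centered-difference part plus a correction part. The uncorrected part reproduces exactly the auxiliary vector field $\mathbold{A}$ of \cref{lem:discreteDivergenceFree}, whose discrete divergence is identically zero, so the whole question collapses to the correction contribution. Writing $(w_{-1},w_0,w_1,w_2)=(1,-27,27,-1)$ for the fourth-order stencil weights, the correction added to $G_{H_x}^{D}$ at $(x_p,y_{j+1/2})$ is $-\tfrac{1}{24\mu\,\Delta y}\sum_{q} w_{q-j}\,\hat{D}_{E_z,p,q}^{n}$, summed over those $q$ for which $(x_p,y_q)$ lies in the subdomain opposite $\boldsymbol{\circ}$, and similarly the correction to $G_{H_y}^{D}$ at $(x_{i+1/2},y_q)$ is $+\tfrac{1}{24\mu\,\Delta x}\sum_{p} w_{p-i}\,\hat{D}_{E_z,p,q}^{n}$, summed over those $p$ requiring correction.

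The structural observation that unlocks the cancellation is that the hypothesis ``all $H$-nodes in the 8-point divergence stencil lie in $\Omega^{\boldsymbol{\circ}}$'' makes the predicate ``$(x_p,y_q)$ needs correction'' depend only on $(x_p,y_q)$ itself, so the same subset $S\subset\{i-1,\dots,i+2\}\times\{j-1,\dots,j+2\}$ of $E_z$-nodes is corrected inside both $G_{H_x}^{D}$ and $G_{H_y}^{D}$. The uniqueness hypothesis on $\hat{D}_{E_z}$ then guarantees a single numerical value per $(p,q)\in S$. Threading these corrections through the outer fourth-order divergence stencil (with weights $w_{p-i}/(24\,\Delta x)$ in $x$ and $w_{q-j}/(24\,\Delta y)$ in $y$), each $\hat{D}_{E_z,p,q}^{n}$ enters with coefficient $-\tfrac{w_{p-i}\,w_{q-j}}{576\,\mu\,\Delta x\,\Delta y}$ from the $\partial_{y_h}^D E_z$ piece inside $G_{H_x}^{D}$ and with coefficient $+\tfrac{w_{p-i}\,w_{q-j}}{576\,\mu\,\Delta x\,\Delta y}$ from the $\partial_{x_h}^D E_z$ piece inside $G_{H_y}^{D}$, and the two contributions cancel pairwise.

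The only real obstacle is the bookkeeping needed to confirm that ``when is a correction applied'' is synchronized across the two derivative stencils, and this is precisely what the same-subdomain assumption together with the uniqueness of $\hat{D}_{E_z}$ provide. Once $\tilde{\nabla}\cdot\mathbold{G}_H$ is shown to vanish at the cell in question, repeating the standard stage-by-stage RK4 combination from \cref{lem:discreteDivergenceFree} yields the advertised conservation of the discrete divergence.
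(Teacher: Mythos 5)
Your proof is correct and follows essentially the same route as the paper: reduce to showing $\tilde{\nabla}\cdot\mathbold{G}_H=0$ at each RK4 stage, and use the same-subdomain hypothesis plus the uniqueness of $\hat{D}_{E_z}$ at each node to get the same discrete div-of-curl cancellation as in \cref{lem:discreteDivergenceFree}. The paper states this tersely by substituting $E_{z}^{+,n}\mapsto E_{z}^{-,n}+\hat{D}_{E_z}^{n}$ where needed; your explicit split into uncorrected and correction parts with the pairwise cancellation of the weights $w_{p-i}w_{q-j}$ is just a more detailed writing of the identical argument.
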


\begin{proof}
Let us consider that the discrete divergence operator \cref{eq:fourthOrderDiscreteDivergence} involves only approximations of $H_x$ and $H_y$ in the same subdomain than the node $(x_{i+1/2},y_{j+1/2})$. 
For simplicity and without loss of generality, 
	consider that the corner where the discrete divergence operator is computed belongs to $\Omega^+$.
Suppose that some approximations of the electric field in \cref{eq:GRK4} belong to $\Omega^-$.
Using the uniqueness of correction functions and repeating the same procedure as in \cref{lem:discreteDivergenceFree}, 
	but with correction functions, 
	that is 
	$$E_{z}^{+,n} \mapsto E_{z}^{-,n} + \hat{D}_{E_z}^n$$
	where it is needed, 
	we find the desired result. 
\end{proof}
	
\begin{theorem}   \label{thm:discreteDivergenceFreeWithCFMPart2}
Assume that correction functions, 
	namely $D_{H_x}$ and $D_{H_y}$, 
	and the magnetic field $\mathbold{H}$ 
	satisfy assumptions of \cref{lem:errorCFM}, 
	and a stability condition of the form 
\begin{equation*}
	\Delta t = \alpha\,\min\{\Delta x,\Delta y\},
\end{equation*} 
	where $\alpha$ is a positive constant.
The approximation of $\mathbold{H}$, 
	computed with the fourth-order staggered FD scheme combined with the RK4 method and the CFM, 
	is such that
$$
\big(\tilde{\nabla}^D\cdot\mathbold{H}\big)_{i+1/2,j+1/2}^{\boldsymbol{\circ},n} = \nabla\cdot\mathbold{H}(x_{i+1/2},y_{j+1/2},t_{n})+ \mathcal{O}(\Delta x^{r} + \Delta y^{r} +  \Delta t^{s})\,,
$$
	for all $i,j$ and all $n\geq0$, 
	where $r = \min\{p-2,3\}$, 
	$s = \min\{p-1,3\}$ 
	and the superscript $\boldsymbol{\circ}$ can be either $+$ or $-$ depending in which subdomain ($\Omega^+$ or $\Omega^-$)
	the node $(x_{i+1/2},y_{j+1/2})$ belongs. 
\end{theorem}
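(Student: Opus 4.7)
The plan is to separate the error of the corrected divergence operator into a stencil--truncation contribution (handled essentially by the argument of \cref{lem:errorCFM}) and a contribution driven by the global RK4--FDTD error in $\mathbold{H}^n$ itself, the latter feeling the $1/\min\{\Delta x,\Delta y\}$ scaling of the divergence stencil. Writing the corrected operator out explicitly at a corner $(x_{i+1/2},y_{j+1/2})$ sitting in, say, $\Omega^+$, every $H_x$ or $H_y$ value appearing in the fourth-order stencil that lies on the opposite side of $\Gamma$ is replaced by $H^{-,n}_\star + \hat{D}^n_{H_\star}$, and I would add and subtract the exact magnetic field to obtain
\[
\bigl(\tilde{\nabla}^D\cdot\mathbold{H}^n\bigr) \;=\; \bigl(\tilde{\nabla}^D\cdot\mathbold{H}(\cdot,t_n)\bigr) \;+\; \bigl(\tilde{\nabla}^D\cdot e^n\bigr), \qquad e^n := \mathbold{H}^n - \mathbold{H}(\cdot,t_n).
\]

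For the first term I would apply the Taylor expansion of \cref{lem:errorCFM} component by component: at each cross-interface node one has $H^{-}_\star + D_{H_\star} = H^{+}_\star + \mathcal{O}(\Delta x^p+\Delta y^p)$, the extension $\mathbold{H}^+$ is smooth, and the fourth-order centered stencil then returns $\nabla\cdot\mathbold{H}^+(x_{i+1/2},y_{j+1/2},t_n) + \mathcal{O}(\Delta x^{\min\{p-1,4\}}+\Delta y^{\min\{p-1,4\}})$, which is dominated by the bound in the conclusion. For the second term I would invoke a convergence estimate for the corrected RK4--FDTD scheme: the nominal RK4 temporal order is four, but the modified corrections $\hat{\mathbold{D}}^n_j$ used at the substages are obtained from $\mathbold{D}^n$ by successive time differentiation and each differentiation loses one order of CFM accuracy, so $\hat{\mathbold{D}}^n_j$ approximates $\mathbold{D}(x,t_n+c_j\Delta t)$ only to $\mathcal{O}(\Delta t^{\min\{p,4\}})$; combined with the spatial truncation of \cref{lem:errorCFM}, this yields
\[
\|e^n\|_\infty \;=\; \mathcal{O}\!\bigl(\Delta t^{\min\{p,4\}}+\Delta x^{\min\{p-1,4\}}+\Delta y^{\min\{p-1,4\}}\bigr).
\]
The stencil coefficients of $\tilde{\nabla}^D\cdot$ scale like $1/\min\{\Delta x,\Delta y\}$, so dividing by that factor and then substituting the stability condition $\Delta t = \alpha\min\{\Delta x,\Delta y\}$ converts these three terms into $\mathcal{O}(\Delta t^{\min\{p-1,3\}})$, $\mathcal{O}(\Delta x^{\min\{p-2,3\}})$ and $\mathcal{O}(\Delta y^{\min\{p-2,3\}})$ respectively, yielding the stated bound with $r=\min\{p-2,3\}$ and $s=\min\{p-1,3\}$.

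The main obstacle is establishing the global error bound on $\mathbold{H}^n$ with both the spatial cap $\min\{p-1,4\}$ (directly from \cref{lem:errorCFM}) and the temporal cap $\min\{p,4\}$ (from the modified corrections). This calls for a Lax-style consistency-plus-stability argument for the corrected RK4--FDTD scheme, carefully tracking how each $\hat{\mathbold{D}}^n_j$ injects an error bottlenecked by the highest-order time derivative of the CFM polynomial it uses; the analogue for the scalar wave equation appears in \cite{Abraham2018}, and the adaptation here must handle the coupled six-component correction system \cref{eq:systCFM} and the simultaneous appearance of $D_{H_x}$ and $D_{H_y}$ in a single divergence stencil. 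Once that bound is secured, the rest of the proof is Taylor-expansion bookkeeping.
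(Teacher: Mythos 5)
Your proposal follows essentially the same route as the paper's proof: replace each cross-interface stencil value by $H^{-}+\hat{D}$, use \cref{lem:errorCFM} together with the $p$-order accuracy of the correction function to write it as $H^{+}(x,y,t_n)+\mathcal{O}(\Delta x^{\min\{p-1,4\}}+\Delta y^{\min\{p-1,4\}}+\Delta t^{\min\{p,4\}})$, then divide by the $\mathcal{O}(\Delta x)$ stencil denominator and invoke the stability condition to land on $r=\min\{p-2,3\}$ and $s=\min\{p-1,3\}$. The global-error estimate you flag as the "main obstacle" is precisely the step the paper also takes for granted (it asserts the bound as a consequence of \cref{lem:errorCFM}, which is only a local truncation result), so your version is, if anything, more candid about where the argument leans on an unproved convergence estimate.
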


\begin{proof}	
Consider that the corrected discrete divergence operator involves approximations 
	of the components of $\mathbold{H}$ that belong to different subdomains. 
For simplicity and without loss of generality, 
	suppose that the corner, 
	where the corrected discrete divergence operator is computed, 
	belongs to $\Omega^+$.
For a given time $t_n$, 
	assume that we need a correction on $H_{x,i+2,j+1/2}^{+,n}$ and $H_{y,i+1/2,j+2}^{+,n}$ in the computation of 
	$\big(\tilde{\nabla}^D\cdot\mathbold{H}\big)_{i+1/2,j+1/2}^{+,n}$, 
	that is 
\begin{equation*}
	\begin{aligned}
		H_{x,i+2,j+1/2}^{+,n} \approx&\,\, H_{x,i+2,j+1/2}^{-,n}  + D_{H_x,i+2,j+1/2}^{n} \,,\\
		H_{y,i+1/2,j+2}^{+,n} \approx&\,\, H_{y,i+1/2,j+2}^{-,n} + D_{H_y,i+1/2,j+2}^{n}\,.
	\end{aligned}
\end{equation*}
Let us compute the Taylor expansion associated with $H_{x,i+2,j+1/2}^{+,n}$.
By \cref{lem:errorCFM}, 
	using the fourth-order staggered FD scheme combined with the RK4 method and a $p$-order accurate approximation of correction functions leads to 
\begin{equation*}
	H_{x,i+2,j+1/2}^{+,n} \approx H_{x}^-(x_{i+2},y_{j+1/2},t_n)  + \mathcal{O}(\Delta x^q + \Delta y^q +\Delta t^4)+ D_{H_x,i+2,j+1/2}^{n},
\end{equation*}
	where $q=\min\{p-1,4\}$.
Hence,
\begin{equation}	\label{eq:errorHx}
	\begin{aligned}
		H_{x,i+2,j+1/2}^{+,n} =&\,\, H_{x}^-(x_{i+2},y_{j+1/2},t_n) + D_{H_x}(x_{i+2},y_{j+1/2},t_n) \\
		&+ \mathcal{O}(\Delta x^q + \Delta y^q +\Delta t^4+\Delta t^p)\\
						=&\,\, H_{x}^+(x_{i+2},y_{j+1/2},t_n) + \mathcal{O}(\Delta x^q + \Delta y^q +\Delta t^k)\,,
	\end{aligned}
\end{equation}
	where $k = \min\{p,4\}$.
Using a similar procedure, 
	we also have 
\begin{equation} \label{eq:errorHy}
	H_{y,i+1/2,j+2}^{+,n}  = H_{y}^+(x_{i+1/2},y_{j+2},t_n) + \mathcal{O}(\Delta x^q + \Delta y^q +\Delta t^k)\,.
\end{equation}
Substituting \cref{eq:errorHx} and \cref{eq:errorHy} in $\big(\tilde{\nabla}^D\cdot\mathbold{H}\big)_{i+1/2,j+1/2}^{+,n}$, 	and using appropriate Taylor expansions and the stability condition, 
	we find the desired result.
\phantom{a}	
\end{proof}
\begin{remark} 
Similar statements can be obtained with the second-order staggered FD scheme.
	However, 
	we need to consider the second-order discrete divergence 
	operator \cref{eq:secondOrderDiscreteDivergence}. 
\end{remark}

\section{Numerical Examples} \label{sec:numEx}
In the following, 
	we perform convergence analysis of the proposed numerical schemes for problems 
	with a manufactured solution with various interfaces.
We use a fourth-order approximation of the correction functions with the RK4 method and either the second-order or  fourth-order staggered FD scheme.
The domain is $\Omega = [ 0,1]\times[0,1]$ and the time interval is $I = [0,0.5]$.
The physical parameters are $\mu = \sigma = \epsilon = 1$ in all $\Omega$. 
Periodic boundary conditions are imposed on all $\partial\Omega$ for all numerical experiments.
We also choose the mesh grid size to be 
$h \in \big\{ \tfrac{1}{20}, \tfrac{1}{28}, \tfrac{1}{40}, \tfrac{1}{52}, \tfrac{1}{72}, \tfrac{1}{96}, \tfrac{1}{132}, \tfrac{1}{180},\tfrac{1}{244},\tfrac{1}{336} \big\}$ and $\Delta x = \Delta y = h$.
The time-step size is chosen to satisfy a stability condition and to reach exactly the final time, 
	that is $\Delta t =  \tfrac{h}{2}$. 
\cref{fig:interfaceGeo} illustrates different geometries of the interface that are studied in this work.
We have $\phi(x,y)\geq0$ in $\Omega^+$, 
	$\phi(x,y) < 0$ in $\Omega^-$ and $\phi(x,y) = 0$ on $\Gamma$, where $\phi(x,y)$ is the level-set function.
 \begin{figure} 
 \centering
  \subfigure[circular]{ \label{fig:circleInterfaceGeo}
		\setlength\figureheight{0.2\linewidth} 
		\setlength\figurewidth{0.2\linewidth} 
		\tikzset{external/export next=false}
%
%
\begin{tikzpicture}

\begin{axis}[%
width=\figurewidth,
height=\figureheight,
at={(0\figurewidth,0\figureheight)},
scale only axis,
xmin=0,
xmax=1,
xminorticks=true,
xlabel={\scriptsize$x$},
ymin=0,
ymax=1,
yminorticks=true,
ylabel={\scriptsize $y$},
axis background/.style={fill=white},
legend style={at={(0.01,0.99)},anchor=north west,legend cell align=left,align=left,draw=white!15!black,draw=none,fill=none},
legend style={font=\scriptsize},
ylabel style={yshift=-5pt},xlabel style={yshift=2.5pt},tick label style={font=\tiny} 
]
\addplot [color=black,line width=1pt,solid]
  table[row sep=crcr]{%
0.75	0.5\\
0.749496669117971	0.515855979914141\\
0.747988703207699	0.531648113393437\\
0.745482174315677	0.547312811090103\\
0.741987175349089	0.56278699679527\\
0.737517779435236	0.578008361424622\\
0.732091983254018	0.592915613915082\\
0.725731634571655	0.607448728022293\\
0.718462344267446	0.621549184025117\\
0.710313383207795	0.635160204363899\\
0.701317564382765	0.64822698226366\\
0.691511110779744	0.660696902421635\\
0.680933509526268	0.672519752870528\\
0.669627352889283	0.683647927164383\\
0.657638166771131	0.694036616072939\\
0.6450142273928	0.703643988012584\\
0.631806366902626	0.712431357487379\\
0.618067768693171	0.720363340861895\\
0.603853753250472	0.72740799883863\\
0.589221555397968	0.733536965066277\\
0.574230093832069	0.738725560361018\\
0.558939733877357	0.742952892080885\\
0.543412044416733	0.746201938253052\\
0.527709549975253	0.748459616115314\\
0.511895478955936	0.749716834795752\\
0.496033509041298	0.749968531918469\\
0.480187510785803	0.749213693987986\\
0.464421290431679	0.747455360470233\\
0.448798332983702	0.744700611553695\\
0.433381546577491	0.740960539639986\\
0.418233009170645	0.736250204678667\\
0.403413718576718	0.730588573526145\\
0.388983346848556	0.723998443572834\\
0.375	0.71650635094611\\
0.361519984033472	0.708142463658693\\
0.348597578215583	0.698940460132708\\
0.336284816513679	0.688937393588565\\
0.32463127807342	0.678173542844716\\
0.313683887581061	0.666692250129073\\
0.303486726314303	0.654539746555151\\
0.294080854642542	0.641764965965693\\
0.285504146691256	0.628419347893352\\
0.277791137836269	0.614556630431853\\
0.270972885641983	0.600232633851653\\
0.265076844803523	0.585505035831417\\
0.260126756596376	0.570433139210357\\
0.256142553278648	0.555077633196635\\
0.253140277830901	0.539500348993337\\
0.251132019356729	0.523764010826046\\
0.250125864404204	0.507931983374517\\
0.250125864404204	0.492068016625483\\
0.251132019356729	0.476235989173954\\
0.253140277830901	0.460499651006662\\
0.256142553278648	0.444922366803365\\
0.260126756596376	0.429566860789643\\
0.265076844803523	0.414494964168583\\
0.270972885641983	0.399767366148347\\
0.277791137836269	0.385443369568148\\
0.285504146691256	0.371580652106648\\
0.294080854642542	0.358235034034307\\
0.303486726314303	0.345460253444849\\
0.313683887581061	0.333307749870927\\
0.32463127807342	0.321826457155284\\
0.336284816513679	0.311062606411435\\
0.348597578215583	0.301059539867292\\
0.361519984033472	0.291857536341307\\
0.375	0.28349364905389\\
0.388983346848557	0.276001556427166\\
0.403413718576718	0.269411426473855\\
0.418233009170645	0.263749795321333\\
0.433381546577491	0.259039460360015\\
0.448798332983702	0.255299388446305\\
0.464421290431679	0.252544639529767\\
0.480187510785803	0.250786306012014\\
0.496033509041298	0.250031468081531\\
0.511895478955936	0.250283165204248\\
0.527709549975253	0.251540383884686\\
0.543412044416733	0.253798061746948\\
0.558939733877357	0.257047107919115\\
0.574230093832069	0.261274439638982\\
0.589221555397968	0.266463034933723\\
0.603853753250472	0.27259200116137\\
0.61806776869317	0.279636659138104\\
0.631806366902626	0.287568642512621\\
0.645014227392799	0.296356011987416\\
0.657638166771131	0.305963383927061\\
0.669627352889283	0.316352072835617\\
0.680933509526267	0.327480247129472\\
0.691511110779744	0.339303097578365\\
0.701317564382765	0.35177301773634\\
0.710313383207795	0.364839795636101\\
0.718462344267446	0.378450815974883\\
0.725731634571655	0.392551271977707\\
0.732091983254018	0.407084386084918\\
0.737517779435236	0.421991638575378\\
0.741987175349089	0.43721300320473\\
0.745482174315677	0.452687188909897\\
0.747988703207699	0.468351886606563\\
0.749496669117971	0.484144020085859\\
0.75	0.5\\
};
\end{axis}
\end{tikzpicture}%
		} 
  \subfigure[5-star]{\label{fig:starInterfaceGeo}
		\setlength\figureheight{0.2\linewidth} 
		\setlength\figurewidth{0.2\linewidth} 
		\tikzset{external/export next=false}
%
%
\begin{tikzpicture}

\begin{axis}[%
width=\figurewidth,
height=\figureheight,
at={(0\figurewidth,0\figureheight)},
scale only axis,
xmin=0,
xmax=1,
xminorticks=true,
xlabel={\scriptsize$x$},
ymin=0,
ymax=1,
yminorticks=true,
ylabel={\scriptsize $y$},
axis background/.style={fill=white},
legend style={at={(0.01,0.99)},anchor=north west,legend cell align=left,align=left,draw=white!15!black,draw=none,fill=none},
legend style={font=\scriptsize},
ylabel style={yshift=-5pt},xlabel style={yshift=2.5pt},tick label style={font=\tiny} 
]
\addplot [color=black,line width=1pt,solid]
  table[row sep=crcr]{%
0.75	0.5\\
0.765066930189006	0.516845499123648\\
0.777395596897263	0.535400996867555\\
0.785474949486592	0.555020786717682\\
0.788201994577402	0.574778085589939\\
0.785015355939205	0.593608069890764\\
0.77595740609245	0.610476680159308\\
0.76165735877021	0.624549447537036\\
0.743238589707334	0.6353343167743\\
0.722163808645616	0.64277601035563\\
0.700040086323819	0.647286395096484\\
0.678410979268275	0.649704586901987\\
0.658564374561849	0.651190825763033\\
0.641382028781861	0.653067981559738\\
0.627250504584707	0.656632481897858\\
0.616044232300894	0.662961322323235\\
0.607181039780345	0.672742897877369\\
0.599740192449566	0.686156495288\\
0.592624237639838	0.702818789561018\\
0.584739992587695	0.721806497324522\\
0.57517168653353	0.74175374251308\\
0.563320871123122	0.761012185776997\\
0.54899298926918	0.777853049331026\\
0.532418651828539	0.790684106941049\\
0.514207511767961	0.798252376567682\\
0.495242705950108	0.799804996894593\\
0.47653268388508	0.795186358147882\\
0.459043571509212	0.784858217139645\\
0.443538105232274	0.769840045925559\\
0.430446197184106	0.751577768613709\\
0.41978749849487	0.731758802741356\\
0.411158596481512	0.712098573477648\\
0.403787839421047	0.694127399905181\\
0.396650635094611	0.67900635094611\\
0.388628899709452	0.667396393140633\\
0.378691488291971	0.659397523849207\\
0.366068930318577	0.654564693923155\\
0.350396159890447	0.651996581389543\\
0.331801144729265	0.650483204539873\\
0.310924802629974	0.64869037868741\\
0.28886729267282	0.645354240940172\\
0.267066301407991	0.639458144361026\\
0.247122803447367	0.63036725568928\\
0.230597541587669	0.617902712298803\\
0.218806015883607	0.602346240273091\\
0.212640420665489	0.584376385492237\\
0.212443602463266	0.564947476486013\\
0.217953180803277	0.545130682692334\\
0.228324437528831	0.525941870830668\\
0.24222977195585	0.508182636263719\\
0.258021956852558	0.492318669514685\\
0.273939601184627	0.478413849178577\\
0.288327374858526	0.466129984705659\\
0.299841504094031	0.454792210092742\\
0.307613092527262	0.443510107071522\\
0.311347673723439	0.431336168610256\\
0.311348229696296	0.417437444595497\\
0.308459472225172	0.401253994825574\\
0.303941991974521	0.382619448574323\\
0.299294416612264	0.361824309008787\\
0.296048649998632	0.339610885577108\\
0.295566630432858	0.317098704281728\\
0.298866396256393	0.295649495700111\\
0.30650070270878	0.276689906746026\\
0.318503668139195	0.261516603583791\\
0.334411068357493	0.251111465823247\\
0.353349364905389	0.24599364905389\\
0.374178854276066	0.246130512759513\\
0.395668840671924	0.250921426425358\\
0.416678519846419	0.259258393384021\\
0.436316895970877	0.269656689333738\\
0.454058560735131	0.280438822818169\\
0.469799009354145	0.289947496199178\\
0.483842337686526	0.296758970171911\\
0.496824312132489	0.299867933057655\\
0.50958344614391	0.298818706976178\\
0.523000448121967	0.293764874710422\\
0.537831099564285	0.285449172824922\\
0.554558596631592	0.275106401615227\\
0.573288501130608	0.264302621791043\\
0.593703118208241	0.254732567191969\\
0.615083268861105	0.248002791883759\\
0.636395344936775	0.245429813564209\\
0.656431694024907	0.247880182902612\\
0.673984222484705	0.255673346298067\\
0.688025828957554	0.268559249751979\\
0.697872676996705	0.285772127230971\\
0.703302644490686	0.306151320021977\\
0.704611242291214	0.328310782058717\\
0.702595042441711	0.350832430569164\\
0.698462957769975	0.372455601627831\\
0.693686098827558	0.392235948724066\\
0.689805910373101	0.40965199149245\\
0.688226560415586	0.424645452329144\\
0.690020202931268	0.437591347041521\\
0.695772356120776	0.449204091999399\\
0.705489399144761	0.460395164537477\\
0.718581809518134	0.47210477008068\\
0.733926408046936	0.485133539295366\\
0.75	0.5\\
};
\end{axis}
\end{tikzpicture}%
		} 
  \subfigure[3-star]{\label{fig:triStarInterfaceGeo}
		\setlength\figureheight{0.2\linewidth} 
		\setlength\figurewidth{0.2\linewidth} 
		\tikzset{external/export next=false}
%
%
\begin{tikzpicture}

\begin{axis}[%
width=\figurewidth,
height=\figureheight,
at={(0\figurewidth,0\figureheight)},
scale only axis,
xmin=0,
xmax=1,
xminorticks=true,
xlabel={\scriptsize$x$},
ymin=0,
ymax=1,
yminorticks=true,
ylabel={\scriptsize $y$},
axis background/.style={fill=white},
legend style={at={(0.01,0.99)},anchor=north west,legend cell align=left,align=left,draw=white!15!black,draw=none,fill=none},
legend style={font=\scriptsize},
ylabel style={yshift=-5pt},xlabel style={yshift=2.5pt},tick label style={font=\tiny} 
]
\addplot [color=black,line width=1pt,solid]
  table[row sep=crcr]{%
0.75	0.55\\
0.777827202174583	0.567656438271724\\
0.803289557453815	0.58870556271793\\
0.82511278435136	0.612660353208428\\
0.842181337802838	0.638783790004716\\
0.85360354222264	0.666134602589831\\
0.858762959553902	0.693627023086182\\
0.857352802859646	0.720100678158521\\
0.84939128458551	0.744396090041338\\
0.835217000935954	0.765430885384937\\
0.815464722280778	0.782271754020518\\
0.791023203005085	0.794197462305598\\
0.762977763212807	0.80074879075029\\
0.732541356387677	0.801762097050854\\
0.700978560081743	0.797384250275455\\
0.669527364829854	0.788067872869486\\
0.639323761934663	0.774547088077271\\
0.611333935732935	0.757795220498922\\
0.586298363185971	0.73896705666323\\
0.564691345602096	0.719329266348019\\
0.546698494101894	0.700183350134391\\
0.532213526603137	0.682785965213926\\
0.520854484439667	0.668271658460186\\
0.511998221527555	0.657582891727411\\
0.504830838876719	0.651411788308068\\
0.498410708003185	0.650157290504339\\
0.491739914515188	0.653900443506685\\
0.483839409977741	0.662399372486807\\
0.473822920977521	0.675104271379505\\
0.460964744468054	0.691191453039411\\
0.444756952828365	0.709614302409751\\
0.424952215298844	0.729167904235782\\
0.40158937070074	0.748563253064262\\
0.375	0.76650635094611\\
0.345795474829044	0.781777195809682\\
0.314835220691593	0.793303680098579\\
0.283178150135716	0.800225753739155\\
0.252020313510234	0.801945836235844\\
0.222622712787477	0.798162349138054\\
0.196233869560482	0.788884325367478\\
0.174012090083937	0.774426266310765\\
0.156952405335079	0.755383683291261\\
0.145822880028894	0.732590995898499\\
0.141114399296272	0.707064586482698\\
0.143007192601117	0.679934755740321\\
0.151356295635617	0.652371028197555\\
0.165696950050079	0.625505673536793\\
0.185269674724417	0.600360413509079\\
0.209063491855173	0.577781068144543\\
0.23587463641193	0.558384373147599\\
0.264377092396477	0.542520406398565\\
0.293200546858284	0.530253046492452\\
0.321010880937386	0.521359715522404\\
0.346588156507218	0.515350407143522\\
0.368897217557134	0.51150474977684\\
0.387146497005929	0.508924684077486\\
0.400831371987693	0.506599318779391\\
0.409759395643644	0.503477735034794\\
0.414055888047432	0.498544987504558\\
0.414149619201147	0.490896334379379\\
0.410739583068125	0.479804832257175\\
0.404745062374646	0.464777848879909\\
0.397242242636605	0.445598750546413\\
0.389391482891641	0.422350966562026\\
0.382359935739573	0.395422759833163\\
0.377244493237901	0.365492268492297\\
0.375	0.33349364905389\\
0.376377322996373	0.300566365918594\\
0.381875221854592	0.267990757183491\\
0.391709065512924	0.237113893052417\\
0.405798348686928	0.20927037375944\\
0.423773744989883	0.185703048272115\\
0.445003170885616	0.167488651546341\\
0.468635107056417	0.155473055530714\\
0.493656310079411	0.150220226667401\\
0.518960119035153	0.151978118716564\\
0.54342087842295	0.160663659496784\\
0.565969604393798	0.175867781954081\\
0.585665941151576	0.196880181052155\\
0.601761693562244	0.222732229412354\\
0.61375176519384	0.252255336215466\\
0.621409143314972	0.28415105898597\\
0.624801601653406	0.317068538775131\\
0.624288971870588	0.349684373102513\\
0.620501089955745	0.380779896844318\\
0.614297773460518	0.409311018129577\\
0.606713349390889	0.434466242722087\\
0.598889255839728	0.455709285009234\\
0.591999018554404	0.472803657462328\\
0.587170406484751	0.485817789493198\\
0.585409765479637	0.495110476657139\\
0.587533403949382	0.501297721991104\\
0.594110466283665	0.505203222113936\\
0.605421006954134	0.507795795256018\\
0.621432016647833	0.510117879740587\\
0.64179301289534	0.513209796414176\\
0.665851564279994	0.518034731028223\\
0.692687848961582	0.525409335931056\\
0.721166136061359	0.535944478443442\\
0.75	0.55\\
};
\end{axis}
\end{tikzpicture}%
		} 
  \caption{Different geometries of the interface.}
  \label{fig:interfaceGeo}
\end{figure}
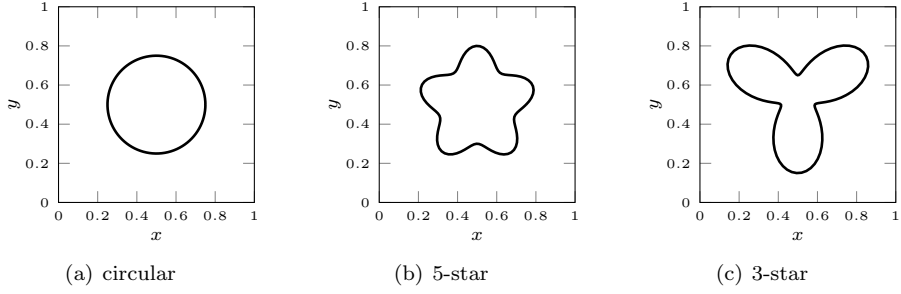

It is worth to mention that the proposed numerical method can be applied directly to problems that involve 
	interface conditions and a perfect electric conductor (PEC) material 
	for which the surface current and charge density are known explicitly.
Unfortunately, 
	to our knowledge, 
	there is no analytical solution for arbitrary geometries of the interface.
We therefore use manufactured solutions to verify the proposed numerical method.
The manufactured solutions that are used satisfy the divergence-free property in each subdomain, 
	but not in the entire domain. 	
However,  
	it is the interface condition \cref{eq:normalHInterf} that allows 
	the divergence-free property of the magnetic field to hold in the whole domain, 
	which can be imposed by the proposed numerical method. 

\FloatBarrier
\subsection{Circular interface} \label{sec:circularInterfacePblm}
The level set function 
\begin{equation*}
\phi(x,y) = (x-x_0)^2 +(y-y_0)^2 - r_0^2,
\end{equation*}
	where $x_0 = y_0 = 0.5$ and $r_0 = 0.25$, 
	is used to describe the interface.
The manufactured solutions are :
\begin{equation*}
	\begin{aligned}
		H_x^+ =&\,\, \sin(2\,\pi\,x)\,\sin(2\,\pi\,y)\,\sin(2\,\pi\,t), \\
		H_y^+ =&\,\, \cos(2\,\pi\,x)\,\cos(2\,\pi\,y)\,\sin(2\,\pi\,t), \\
		E_z^+ =&\,\, \sin(2\,\pi\,x)\,\cos(2\,\pi\,y)\,\cos(2\,\pi\,t)
	\end{aligned}
\end{equation*}
	in $\Omega^+$, 
	and 
\begin{equation*}
	\begin{aligned}
		H_x^- =&\,\, -2\,\sin(2\,\pi\,x)\,\sin(2\,\pi\,y)\,\sin(2\,\pi\,t) + 5, \\
		H_y^- =&\,\, -2\,\cos(2\,\pi\,x)\,\cos(2\,\pi\,y)\,\sin(2\,\pi\,t) + 3, \\
		E_z^- =&\,\, -2\,\sin(2\,\pi\,x)\,\cos(2\,\pi\,y)\,\cos(2\,\pi\,t) +2
	\end{aligned}
\end{equation*}
	in $\Omega^-$.
The associated source terms are $\mathbold{f}_1^+ = \mathbold{f}_1^- = 0$ and 
\begin{equation*}
	\begin{aligned}
		f_2^+ =&\,\, (2\,\pi\,\sin(2\,\pi\,t) + \cos(2\,\pi\,t) )\, \sin(2\,\pi\,x)\,\cos(2\,\pi\,y), \\
		f_2^-  =&\,\, -(4\,\pi\,\sin(2\,\pi\,t) + 2\,\cos(2\,\pi\,t) )\, \sin(2\,\pi\,x)\,\cos(2\,\pi\,y) + 2.
	\end{aligned}
\end{equation*}
\cref{fig:convPlotCircularInterface1} and \cref{fig:convPlotCircularInterface2} illustrate 
	convergence plots for respectively the second-order and fourth-order staggered FD scheme using the $L^\infty$-norm and the $L^1$-norm.
For the second-order scheme, 
	a second-order convergence is obtained for components $H_x$, $H_y$ 
	and $E_z$ in both norms as expected by \cref{lem:errorCFM}.
The divergence constraint converges to second and third order using respectively the 
	$L^\infty$-norm and the $L^1$-norm,
	which is better than expected and still in agreement with the theory. 
For the fourth-order scheme, 
	the magnetic field and the electric field converge to third-order in $L^\infty$-norm, 
	while a fourth-order convergence is obtained in $L^1$-norm.
A second and third order convergence are observed for the divergence of $\mathbold{H}$ in $L^\infty$-norm and the $L^1$-norm. 
These results support our previous analysis presented in \cref{sec:2D}.
 \cref{fig:plotCircleInterfaceFields} shows components $H_x$, $H_y$ and $E_z$ at different time steps using the smallest mesh grid size, 
	namely $h = \tfrac{1}{336}$, 
	and the fourth-order staggered FD scheme with the CFM.
The discontinuities are accurately captured without spurious oscillations. 
 \begin{figure} 
 \centering
  \subfigure[second-order staggered FD scheme]{ \label{fig:convPlotCircularInterface1}
		\setlength\figureheight{0.33\linewidth} 
		\setlength\figurewidth{0.33\linewidth} 
		\tikzset{external/export next=false}
%
%
\begin{tikzpicture}

\begin{axis}[%
width=0.951\figurewidth,
height=\figureheight,
at={(0\figurewidth,0\figureheight)},
scale only axis,
xmode=log,
xmin=0.001,
xmax=0.1,
xminorticks=true,
xlabel={\scriptsize$h$},
ymode=log,
ymin=1e-05,
ymax=0.1,
yminorticks=true,
ylabel={\scriptsize$\|\mathbold{U}-\mathbold{U}_h\|$},
axis background/.style={fill=white},
legend style={at={(0.005,0.99)},anchor=north west,legend cell align=left,align=left,draw=white!15!black,draw=none,fill=none},
legend style={font=\scriptsize},
ylabel style={yshift=-5pt},xlabel style={yshift=2.5pt},tick label style={font=\tiny} 
]
\addplot [color=black,line width=1pt,solid,mark=o,mark options={solid}]
  table[row sep=crcr]{%
0.05	0.0118842031069072\\
0.0357142857142857	0.00596817184745566\\
0.025	0.00294335584597448\\
0.0192307692307692	0.00174303134332458\\
0.0138888888888889	0.000907039210860897\\
0.0104166666666667	0.000508319344181549\\
0.00757575757575758	0.00026941328597109\\
0.00555555555555556	0.000144802440152196\\
0.00409836065573771	7.87716206021293e-05\\
0.00297619047619048	4.15396477746633e-05\\
};
\addlegendentry{$L^\infty$};

\addplot [color=blue,line width=1pt,solid,mark=o,mark options={solid}]
  table[row sep=crcr]{%
0.05	0.00890299557057497\\
0.0357142857142857	0.00457364079652462\\
0.025	0.00226813826471604\\
0.0192307692307692	0.00135262559681515\\
0.0138888888888889	0.000708159228449227\\
0.0104166666666667	0.000399289838995268\\
0.00757575757575758	0.00021231765258101\\
0.00555555555555556	0.000114422947756\\
0.00409836065573771	6.23658707409317e-05\\
0.00297619047619048	3.29347202476255e-05\\
};
\addlegendentry{$L^1$};

\addplot [color=red,line width=1pt,solid]
  table[row sep=crcr]{%
0.05	0.005\\
0.0357142857142857	0.00255102040816327\\
0.025	0.00125\\
0.0192307692307692	0.000739644970414201\\
0.0138888888888889	0.000385802469135802\\
0.0104166666666667	0.000217013888888889\\
0.00757575757575758	0.000114784205693297\\
0.00555555555555556	6.17283950617284e-05\\
0.00409836065573771	3.35931201289976e-05\\
0.00297619047619048	1.77154195011338e-05\\
};
\addlegendentry{$h^2$};

\end{axis}
\end{tikzpicture}%
		\setlength\figureheight{0.33\linewidth} 
		\setlength\figurewidth{0.33\linewidth} 
		\tikzset{external/export next=false}
%
%
\begin{tikzpicture}

\begin{axis}[%
width=0.951\figurewidth,
height=\figureheight,
at={(0\figurewidth,0\figureheight)},
scale only axis,
xmode=log,
xmin=0.001,
xmax=0.1,
xminorticks=true,
xlabel={\scriptsize$h$},
ymode=log,
ymin=1e-07,
ymax=0.1,
yminorticks=true,
ylabel={\scriptsize$\|\nabla^D\cdot\mathbold{H}_h\|$},
axis background/.style={fill=white},
legend style={at={(0.65,0.45)},anchor=north west,legend cell align=left,align=left,draw=white!15!black,draw=none,fill=none},
legend style={font=\scriptsize},
ylabel style={yshift=-5pt},xlabel style={yshift=2.5pt},tick label style={font=\tiny} 
]
\addplot [color=black,line width=1pt,solid,mark=o,mark options={solid}]
  table[row sep=crcr]{%
0.05	0.0828380136152802\\
0.0357142857142857	0.0614893928654681\\
0.025	0.0386802494688538\\
0.0192307692307692	0.0163217078820708\\
0.0138888888888889	0.0108407917273894\\
0.0104166666666667	0.00861079912505147\\
0.00757575757575758	0.00541052808966924\\
0.00555555555555556	0.00311365516517981\\
0.00409836065573771	0.00143839460406525\\
0.00297619047619048	0.000899218663334977\\
};
\addlegendentry{$L^\infty$};

\addplot [color=blue,line width=1pt,solid,mark=o,mark options={solid}]
  table[row sep=crcr]{%
0.05	0.00288162498150164\\
0.0357142857142857	0.00131104962244058\\
0.025	0.000476619295513047\\
0.0192307692307692	0.000185660742825441\\
0.0138888888888889	7.72690006047546e-05\\
0.0104166666666667	3.63351474526224e-05\\
0.00757575757575758	1.52683023013448e-05\\
0.00555555555555556	6.07277168269196e-06\\
0.00409836065573771	2.19831537159133e-06\\
0.00297619047619048	8.9942827341066e-07\\
};
\addlegendentry{$L^1$};

\addplot [color=red,line width=1pt,densely dashed]
  table[row sep=crcr]{%
0.05	0.05\\
0.0357142857142857	0.0255102040816327\\
0.025	0.0125\\
0.0192307692307692	0.00739644970414201\\
0.0138888888888889	0.00385802469135802\\
0.0104166666666667	0.00217013888888889\\
0.00757575757575758	0.00114784205693297\\
0.00555555555555556	0.000617283950617284\\
0.00409836065573771	0.000335931201289976\\
0.00297619047619048	0.000177154195011338\\
};
\addlegendentry{$h^2$};

\addplot [color=red,line width=1pt,solid]
  table[row sep=crcr]{%
0.05	0.0125\\
0.0357142857142857	0.00455539358600583\\
0.025	0.0015625\\
0.0192307692307692	0.000711197086936732\\
0.0138888888888889	0.000267918381344307\\
0.0104166666666667	0.00011302806712963\\
0.00757575757575758	4.34788657929154e-05\\
0.00555555555555556	1.71467764060357e-05\\
0.00409836065573771	6.8838360920077e-06\\
0.00297619047619048	2.63622314004967e-06\\
};
\addlegendentry{$h^3$};

\end{axis}
\end{tikzpicture}%
		} 
  \subfigure[fourth-order staggered FD scheme]{ \label{fig:convPlotCircularInterface2}
		\setlength\figureheight{0.33\linewidth} 
		\setlength\figurewidth{0.33\linewidth} 
		\tikzset{external/export next=false}
%
%
\begin{tikzpicture}

\begin{axis}[%
width=0.951\figurewidth,
height=\figureheight,
at={(0\figurewidth,0\figureheight)},
scale only axis,
xmode=log,
xmin=0.001,
xmax=0.1,
xminorticks=true,
xlabel={\scriptsize$h$},
ymode=log,
ymin=1e-09,
ymax=1,
yminorticks=true,
ylabel={\scriptsize$\|\mathbold{U}-\mathbold{U}_h\|$},
axis background/.style={fill=white},
legend style={at={(0.005,0.99)},anchor=north west,legend cell align=left,align=left,draw=white!15!black,draw=none,fill=none},
legend style={font=\scriptsize},
ylabel style={yshift=-5pt},xlabel style={yshift=2.5pt},tick label style={font=\tiny} 
]
\addplot [color=black,line width=1pt,solid,mark=o,mark options={solid}]
  table[row sep=crcr]{%
0.05	0.0239710094290927\\
0.0357142857142857	0.00866915862248696\\
0.025	0.00534267066182986\\
0.0192307692307692	0.00159444465686544\\
0.0138888888888889	0.000755993901364224\\
0.0104166666666667	0.000312091948571958\\
0.00757575757575758	0.000145375852371924\\
0.00555555555555556	5.42493779150452e-05\\
0.00409836065573771	2.22800334757784e-05\\
0.00297619047619048	8.3758357872783e-06\\
};
\addlegendentry{$L^\infty$};

\addplot [color=blue,line width=1pt,solid,mark=o,mark options={solid}]
  table[row sep=crcr]{%
0.05	0.00613543625147415\\
0.0357142857142857	0.00134260579312115\\
0.025	0.000336694709209418\\
0.0192307692307692	0.000122095520149661\\
0.0138888888888889	3.24824826902807e-05\\
0.0104166666666667	9.17686211191629e-06\\
0.00757575757575758	2.68091423743548e-06\\
0.00555555555555556	7.75667623956443e-07\\
0.00409836065573771	2.26200547789853e-07\\
0.00297619047619048	6.13230558755571e-08\\
};
\addlegendentry{$L^1$};

\addplot [color=red,line width=1pt,densely dashed]
  table[row sep=crcr]{%
0.05	0.3125\\
0.0357142857142857	0.113884839650146\\
0.025	0.0390625\\
0.0192307692307692	0.0177799271734183\\
0.0138888888888889	0.00669795953360768\\
0.0104166666666667	0.00282570167824074\\
0.00757575757575758	0.00108697164482288\\
0.00555555555555556	0.000428669410150892\\
0.00409836065573771	0.000172095902300193\\
0.00297619047619048	6.59055785012418e-05\\
};
\addlegendentry{$h^3$};

\addplot [color=red,line width=1pt,solid]
  table[row sep=crcr]{%
0.05	0.000625\\
0.0357142857142857	0.000162692628071637\\
0.025	3.90625e-05\\
0.0192307692307692	1.36768670564756e-05\\
0.0138888888888889	3.72108862978204e-06\\
0.0104166666666667	1.17737569926698e-06\\
0.00757575757575758	3.29385346916026e-07\\
0.00555555555555556	9.52598689224204e-08\\
0.00409836065573771	2.82124430000316e-08\\
0.00297619047619048	7.84590220252878e-09\\
};
\addlegendentry{$h^4$};

\end{axis}
\end{tikzpicture}%
		\setlength\figureheight{0.33\linewidth} 
		\setlength\figurewidth{0.33\linewidth} 
		\tikzset{external/export next=false}
%
%
\begin{tikzpicture}

\begin{axis}[%
width=0.951\figurewidth,
height=\figureheight,
at={(0\figurewidth,0\figureheight)},
scale only axis,
xmode=log,
xmin=0.001,
xmax=0.1,
xminorticks=true,
xlabel={\scriptsize$h$},
ymode=log,
ymin=1e-06,
ymax=10,
yminorticks=true,
ylabel={\scriptsize$\|\tilde{\nabla}^D\cdot\mathbold{H}_h\|$},
axis background/.style={fill=white},
legend style={at={(0.65,0.45)},anchor=north west,legend cell align=left,align=left,draw=white!15!black,draw=none,fill=none},
legend style={font=\scriptsize},
ylabel style={yshift=-5pt},xlabel style={yshift=2.5pt},tick label style={font=\tiny} 
]
\addplot [color=black,line width=1pt,solid,mark=o,mark options={solid}]
  table[row sep=crcr]{%
0.05	1.02954951917225\\
0.0357142857142857	0.732228651443478\\
0.025	0.575557832796079\\
0.0192307692307692	0.25248240001855\\
0.0138888888888889	0.161022485801425\\
0.0104166666666667	0.118957610295297\\
0.00757575757575758	0.0669816347503911\\
0.00555555555555556	0.0360179253698334\\
0.00409836065573771	0.0185018464628683\\
0.00297619047619048	0.0101131773758425\\
};
\addlegendentry{$L^\infty$};

\addplot [color=blue,line width=1pt,solid,mark=o,mark options={solid}]
  table[row sep=crcr]{%
0.05	0.0413651918376417\\
0.0357142857142857	0.0144310435495455\\
0.025	0.0053972639668553\\
0.0192307692307692	0.00213078332888018\\
0.0138888888888889	0.000736424227112529\\
0.0104166666666667	0.000360168253231505\\
0.00757575757575758	0.000146607519535708\\
0.00555555555555556	5.31508214398413e-05\\
0.00409836065573771	1.89463570087043e-05\\
0.00297619047619048	7.62555948796126e-06\\
};
\addlegendentry{$L^1$};

\addplot [color=red,line width=1pt,densely dashed]
  table[row sep=crcr]{%
0.05	7.5\\
0.0357142857142857	3.8265306122449\\
0.025	1.875\\
0.0192307692307692	1.1094674556213\\
0.0138888888888889	0.578703703703704\\
0.0104166666666667	0.325520833333333\\
0.00757575757575758	0.172176308539945\\
0.00555555555555556	0.0925925925925926\\
0.00409836065573771	0.0503896801934964\\
0.00297619047619048	0.0265731292517007\\
};
\addlegendentry{$h^2$};

\addplot [color=red,line width=1pt,solid]
  table[row sep=crcr]{%
0.05	0.0125\\
0.0357142857142857	0.00455539358600583\\
0.025	0.0015625\\
0.0192307692307692	0.000711197086936732\\
0.0138888888888889	0.000267918381344307\\
0.0104166666666667	0.00011302806712963\\
0.00757575757575758	4.34788657929154e-05\\
0.00555555555555556	1.71467764060357e-05\\
0.00409836065573771	6.8838360920077e-06\\
0.00297619047619048	2.63622314004967e-06\\
};
\addlegendentry{$h^3$};

\end{axis}
\end{tikzpicture}%
		} 
  \caption{Convergence plots for a problem with a manufactured solution and the circular interface using fourth-order approximations of correction functions, and either the second-order or fourth-order staggered FD scheme. It is recalled that $\mathbold{U} = [H_x,H_y,E_z]^T$.}
   \label{fig:convPlotCircularInterface}
\end{figure}
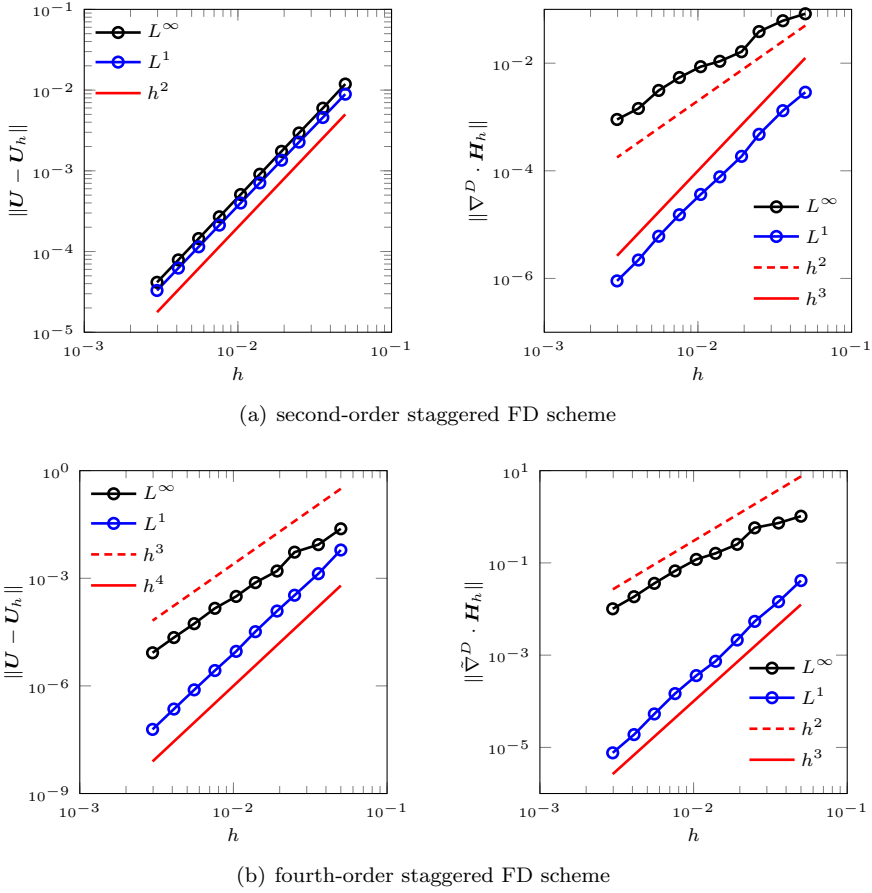
\begin{figure}     
	\centering
	\subfigure[$H_x$]{
	\stackunder[5pt]{
	\stackunder[5pt]{\includegraphics[width=2.25in]{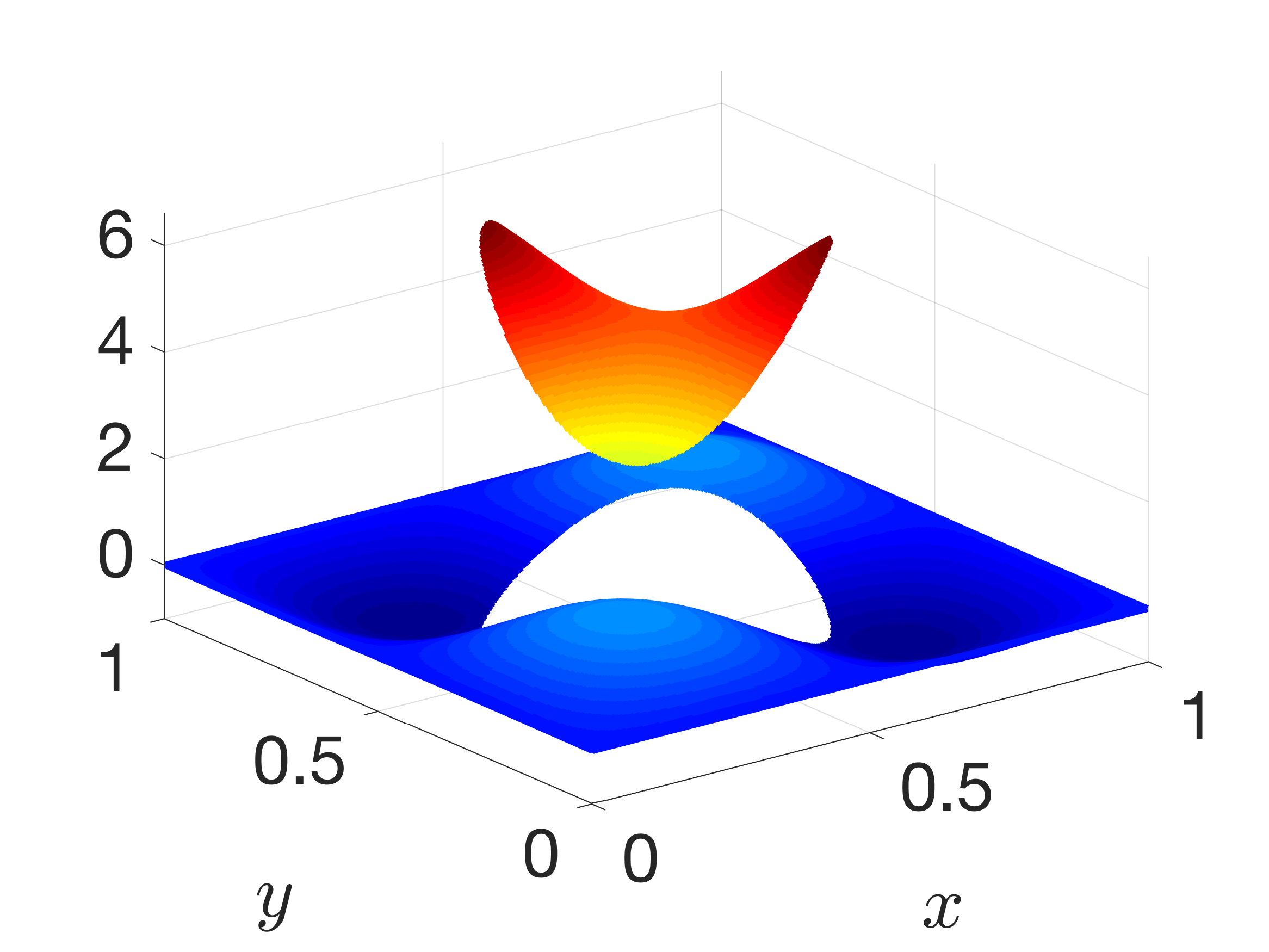}}{\footnotesize$t=0.25$}
	\stackunder[5pt]{	\includegraphics[width=2.25in]{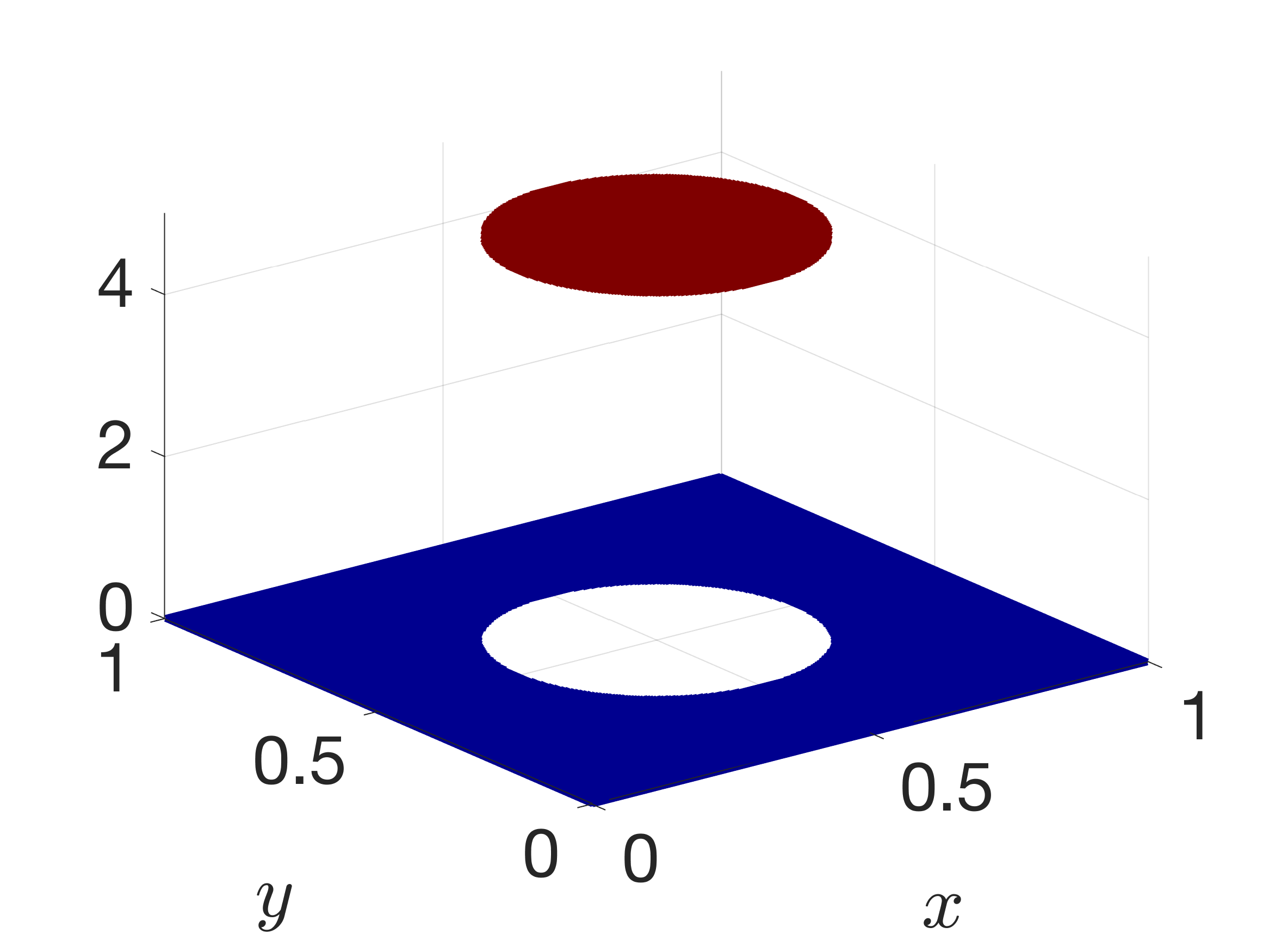}}{\footnotesize$t=0.5$}
	} 
	{\phantom{A}}
	}
\subfigure[$H_y$]{
	\stackunder[5pt]{
	\stackunder[5pt]{\includegraphics[width=2.25in]{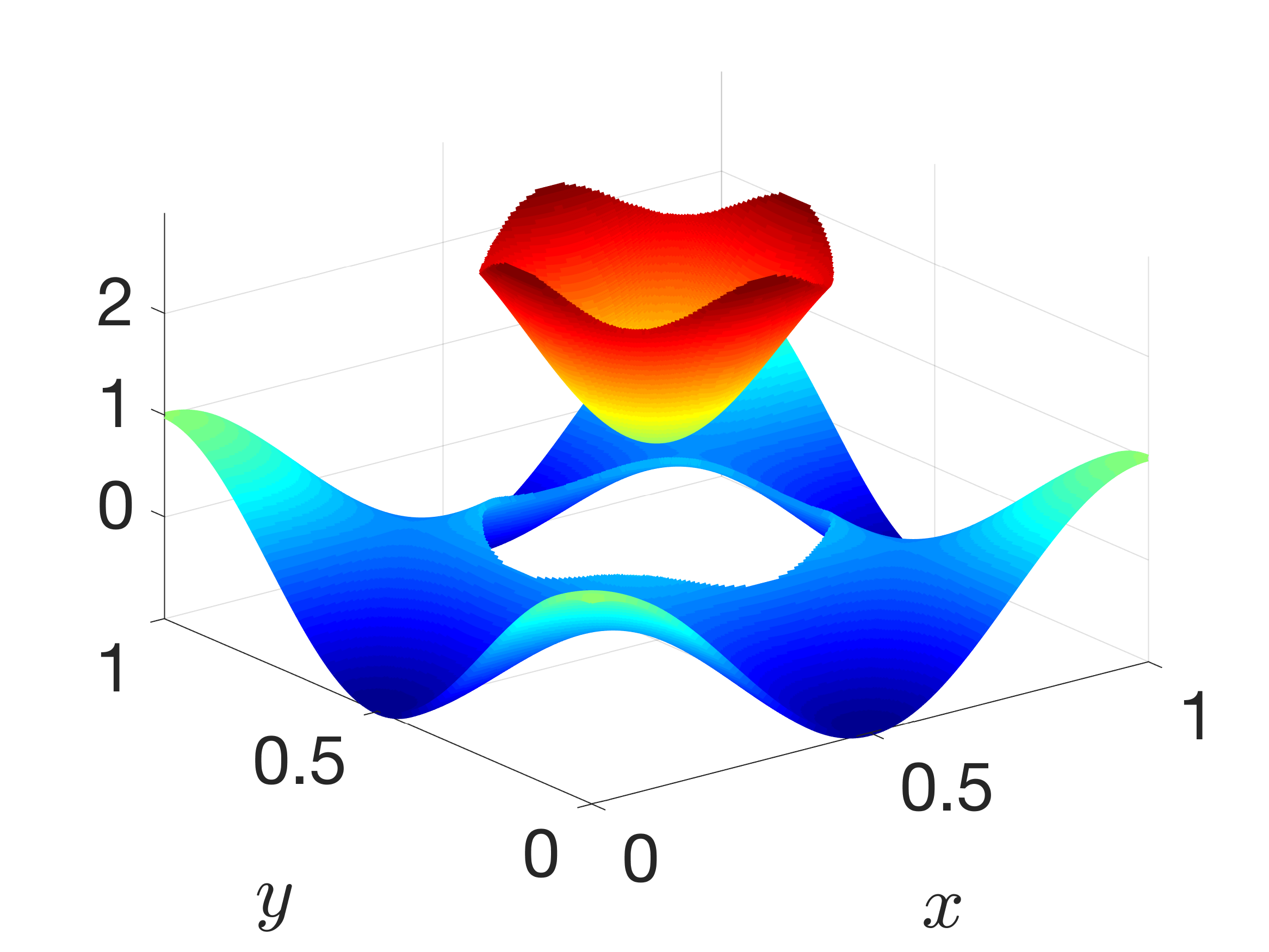}}{\footnotesize$t=0.25$}
	\stackunder[5pt]{\includegraphics[width=2.25in]{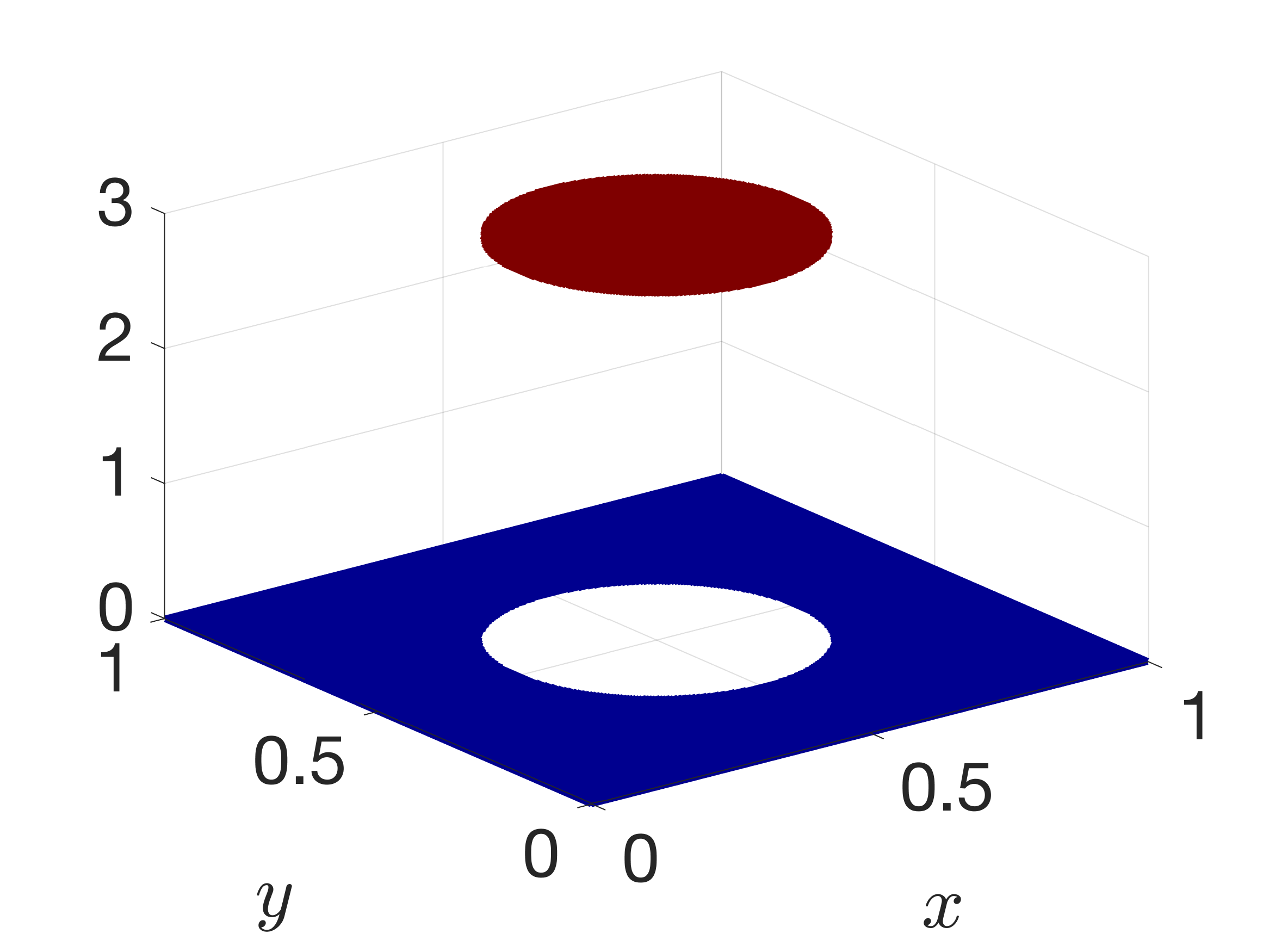}}{\footnotesize$t=0.5$}
	} 
	{\phantom{A}}
	}
	\subfigure[$E_z$]{
	\stackunder[5pt]{
	\stackunder[5pt]{\includegraphics[width=2.25in]{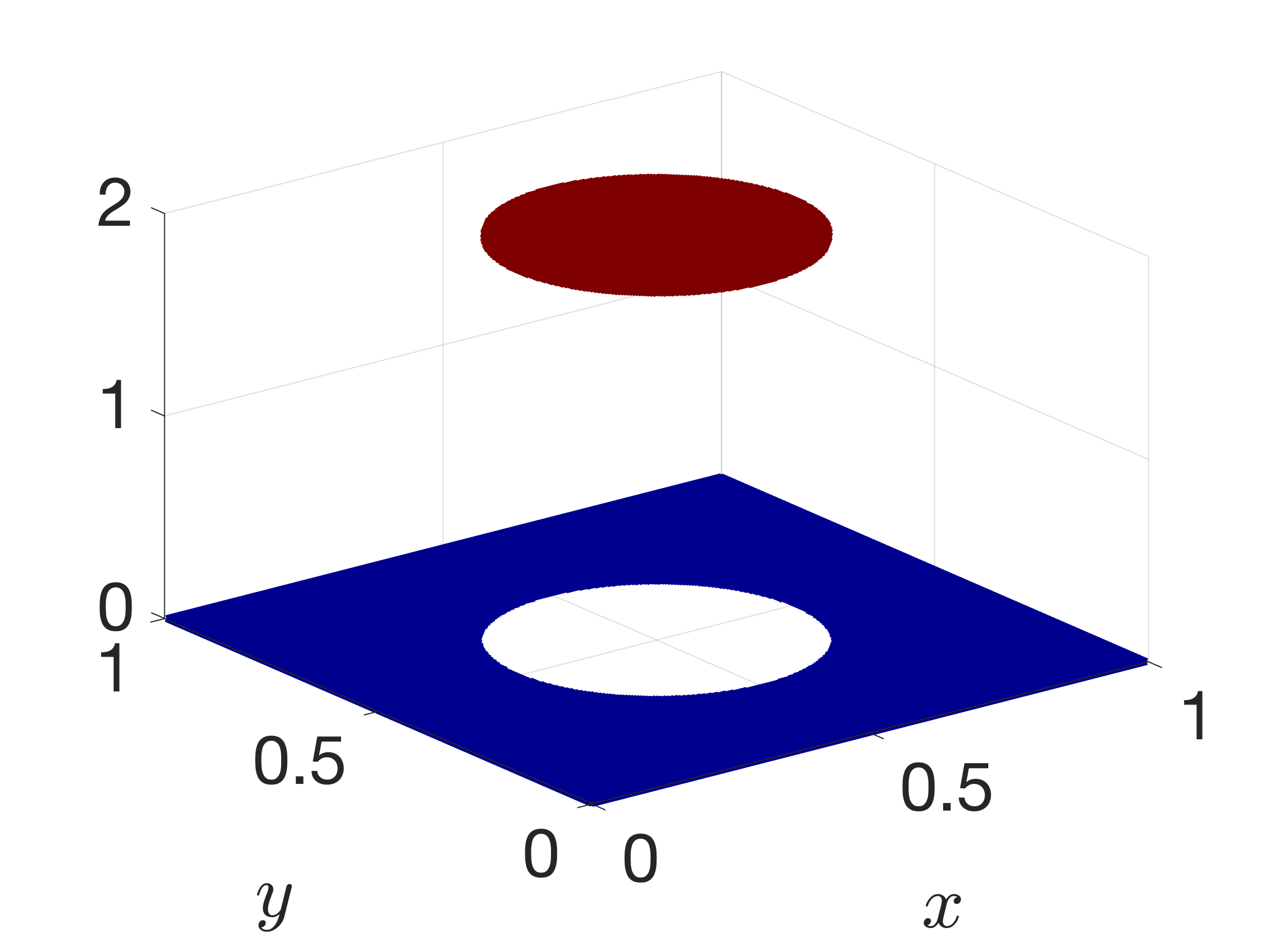}}{\footnotesize$t=0.25$}
	\stackunder[5pt]{\includegraphics[width=2.25in]{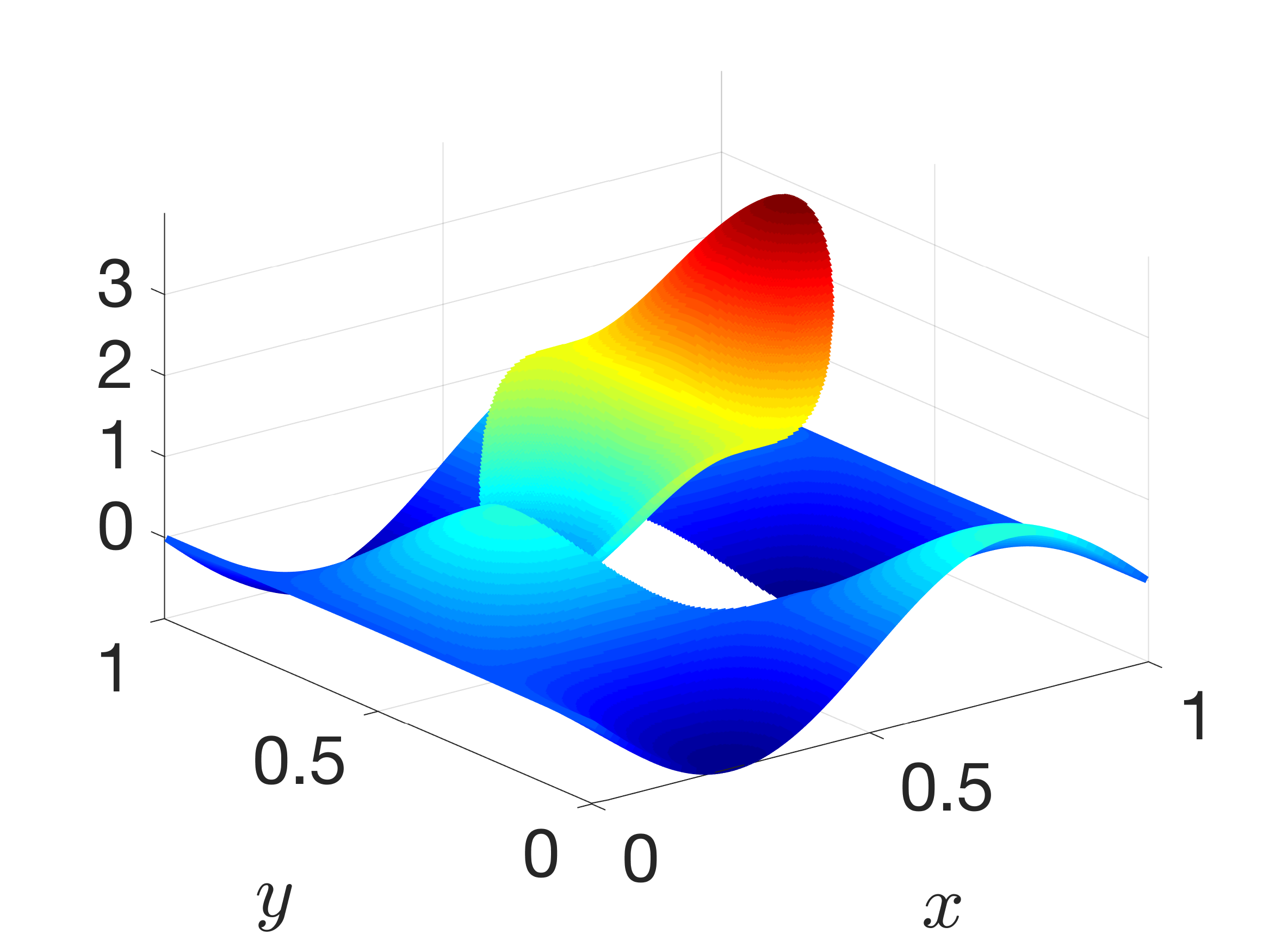}}{\footnotesize$t=0.5$}
	} 
	{\phantom{A}}
	}
       \caption{The components $H_x$, $H_y$ and $E_z$ at two time steps with $h = \tfrac{1}{336}$ and $\Delta t = \tfrac{h}{2}$ using a fourth-order FDTD scheme with the CFM for a problem with a manufactured solution and the circle interface.}
       \label{fig:plotCircleInterfaceFields}
\end{figure}
		
\FloatBarrier
\subsection{5-star interface}
The level set function is given by
\begin{equation*}
\phi(x,y) = (x-x_0)^2 +(y-y_0)^2 - r^2(\theta),
\end{equation*}
	where 
\begin{equation*}
	r(\theta) = r_0 + \epsilon\,\sin(\omega\,\theta(x,y)),
\end{equation*}
	$\omega=5$, $x_0 = y_0 = 0.5$, $r_0 = 0.25$, $\epsilon = 0.05$ and $\theta(x,y)$ is the angle between the vector $[x-x_0,y-y_0]^T$ and the $x$-axis.
\cref{fig:starInterfaceGeo} illustrates the geometry of the interface.
The manufactured solutions are :
\begin{equation*}
	\begin{aligned}
		H_x^+ =&\,\, \sin(4\,\pi\,x)\,\sin(4\,\pi\,y)\,\cos(2\,\pi\,t), \\
		H_y^+ =&\,\, \cos(4\,\pi\,x)\,\cos(4\,\pi\,y)\,\cos(2\,\pi\,t), \\
		E_z^+ =&\,\, 0, \\
		H_x^- =&\,\, (-x\,e^{-x\,y}+2)\,\sin(2\,\pi\,t), \\
		H_y^- =&\,\, (y\,e^{-x\,y}+3)\,\sin(2\,\pi\,t), \\
		E_z^- =&\,\, \sin(2\,\pi\,x\,y)\,\cos(2\,\pi\,t).
	\end{aligned}
\end{equation*}
The associated source terms are 
\begin{equation*}
	\begin{aligned}
		f_{1_x}^+ =&\,\, -2\,\pi\,\sin(4\,\pi\,x)\,\sin(4\,\pi\,y)\,\sin(2\,\pi\,t),\\
		f_{1_x}^-  =&\,\, \big(2\,\pi\,(-x\,e^{-x\,y}+2)+2\,\pi\,x\,\cos(2\,\pi\,x\,y)\big)\,\cos(2\,\pi\,t),\\
		f_{1_y}^+ =&\,\, -2\,\pi\,\cos(4\,\pi\,x)\,\cos(4\,\pi\,y)\,\sin(2\,\pi\,t),\\
		f_{1_y}^-  =&\,\, 2\,\pi\,(y\,e^{-x\,y}-y\,\cos(2\,\pi\,x\,y)+3)\,\cos(2\,\pi\,t),\\
		f_2^+ =&\,\, 8\,\pi\,\sin(4\,\pi\,x)\,\cos(4\,\pi\,y)\,\cos(2\,\pi\,t), \\
		f_2^-  =&\,\, \big(-2\,\pi\,\sin(2\,\pi\,x\,y)+y^2\,e^{-x\,y}+x^2\,e^{-x\,y}\big)\,\sin(2\,\pi\,t)+\sin(2\,\pi\,x\,y)\,\cos(2\,\pi\,t).
	\end{aligned}
\end{equation*}
\cref{fig:convPlotStarInterface} illustrates the convergence plots for fourth-order approximations of correction functions, and either the second-order or fourth-order staggered FD scheme.
A second-order convergence for the solutions is obtained with the second-order FD scheme in both norms 
	while a second and third order convergence for the divergence constraint 
	are observed with respectively the $L^{\infty}$-norm and the $L^1$-norm.
For the fourth-order FD scheme, 
	the solutions converge to third and fourth order in respectively 
	$L^{\infty}$-norm and $L^1$-norm.
We also observe a second-order convergence for the divergence constraint 
	using the $L^{\infty}$-norm and a third-order convergence using the $L^1$-norm.
\cref{fig:plotStarInterfaceFields} shows the evolution of components $H_x$, $H_y$ and $E_z$. 
Here again, 
	the results are in agreement with the theory and 
	the discontinuities are accurately captured for a more complex interface.
 \begin{figure} 
 \centering
  \subfigure[second-order staggered FD scheme]{
		\setlength\figureheight{0.33\linewidth} 
		\setlength\figurewidth{0.33\linewidth} 
		\tikzset{external/export next=false}
%
%
\begin{tikzpicture}

\begin{axis}[%
width=0.951\figurewidth,
height=\figureheight,
at={(0\figurewidth,0\figureheight)},
scale only axis,
xmode=log,
xmin=0.001,
xmax=0.1,
xminorticks=true,
xlabel={\scriptsize$h$},
ymode=log,
ymin=1e-05,
ymax=0.1,
yminorticks=true,
ylabel={\scriptsize$\|\mathbold{U}-\mathbold{U}_h\|$},
axis background/.style={fill=white},
legend style={at={(0.005,0.99)},anchor=north west,legend cell align=left,align=left,draw=white!15!black,draw=none,fill=none},
legend style={font=\scriptsize},
ylabel style={yshift=-5pt},xlabel style={yshift=2.5pt},tick label style={font=\tiny} 
]
\addplot [color=black,line width=1pt,solid,mark=o,mark options={solid}]
  table[row sep=crcr]{%
0.05	0.0193458400317951\\
0.0357142857142857	0.0096174406023526\\
0.025	0.00453667762350904\\
0.0192307692307692	0.00268604356198591\\
0.0138888888888889	0.0013968215884405\\
0.0104166666666667	0.000783955538834223\\
0.00757575757575758	0.000413856014637165\\
0.00555555555555556	0.0002225532082971\\
0.00409836065573771	0.0001211469735934\\
0.00297619047619048	6.38563376380486e-05\\
};
\addlegendentry{$L^\infty$};

\addplot [color=blue,line width=1pt,solid,mark=o,mark options={solid}]
  table[row sep=crcr]{%
0.05	0.0159988491707464\\
0.0357142857142857	0.0076550644970154\\
0.025	0.00368988883465972\\
0.0192307692307692	0.00215923881959111\\
0.0138888888888889	0.00112639055598226\\
0.0104166666666667	0.000633264703994734\\
0.00757575757575758	0.000335690925683246\\
0.00555555555555556	0.00018079960362083\\
0.00409836065573771	9.8444685359695e-05\\
0.00297619047619048	5.19413373252499e-05\\
};
\addlegendentry{$L^1$};

\addplot [color=red,line width=1pt,solid]
  table[row sep=crcr]{%
0.05	0.005\\
0.0357142857142857	0.00255102040816327\\
0.025	0.00125\\
0.0192307692307692	0.000739644970414201\\
0.0138888888888889	0.000385802469135802\\
0.0104166666666667	0.000217013888888889\\
0.00757575757575758	0.000114784205693297\\
0.00555555555555556	6.17283950617284e-05\\
0.00409836065573771	3.35931201289976e-05\\
0.00297619047619048	1.77154195011338e-05\\
};
\addlegendentry{$h^2$};

\end{axis}
\end{tikzpicture}%
		\setlength\figureheight{0.33\linewidth} 
		\setlength\figurewidth{0.33\linewidth} 
		\tikzset{external/export next=false}
%
%
\begin{tikzpicture}

\begin{axis}[%
width=0.951\figurewidth,
height=\figureheight,
at={(0\figurewidth,0\figureheight)},
scale only axis,
xmode=log,
xmin=0.001,
xmax=0.1,
xminorticks=true,
xlabel={\scriptsize$h$},
ymode=log,
ymin=1e-07,
ymax=1,
yminorticks=true,
ylabel={\scriptsize$\|\nabla^D\cdot\mathbold{H}_h\|$},
axis background/.style={fill=white},
legend style={at={(0.65,0.45)},anchor=north west,legend cell align=left,align=left,draw=white!15!black,draw=none,fill=none},
legend style={font=\scriptsize},
ylabel style={yshift=-5pt},xlabel style={yshift=2.5pt},tick label style={font=\tiny} 
]
\addplot [color=black,line width=1pt,solid,mark=o,mark options={solid}]
  table[row sep=crcr]{%
0.05	0.388718264038786\\
0.0357142857142857	0.195329912680464\\
0.025	0.0499486733282097\\
0.0192307692307692	0.0249625354881676\\
0.0138888888888889	0.0128589861203068\\
0.0104166666666667	0.00571209751087842\\
0.00757575757575758	0.00502769154861227\\
0.00555555555555556	0.00234950704111725\\
0.00409836065573771	0.00131907609159043\\
0.00297619047619048	0.000583592695619473\\
};
\addlegendentry{$L^\infty$};

\addplot [color=blue,line width=1pt,solid,mark=o,mark options={solid}]
  table[row sep=crcr]{%
0.05	0.0135229809556459\\
0.0357142857142857	0.00224610616663869\\
0.025	0.000609438177417783\\
0.0192307692307692	0.000236049000685062\\
0.0138888888888889	8.03325733190095e-05\\
0.0104166666666667	2.89535941738915e-05\\
0.00757575757575758	1.36286942130171e-05\\
0.00555555555555556	4.82072485575168e-06\\
0.00409836065573771	1.9368607176008e-06\\
0.00297619047619048	6.78662604676997e-07\\
};
\addlegendentry{$L^1$};

\addplot [color=red,line width=1pt,densely dashed]
  table[row sep=crcr]{%
0.05	0.05\\
0.0357142857142857	0.0255102040816327\\
0.025	0.0125\\
0.0192307692307692	0.00739644970414201\\
0.0138888888888889	0.00385802469135802\\
0.0104166666666667	0.00217013888888889\\
0.00757575757575758	0.00114784205693297\\
0.00555555555555556	0.000617283950617284\\
0.00409836065573771	0.000335931201289976\\
0.00297619047619048	0.000177154195011338\\
};
\addlegendentry{$h^2$};

\addplot [color=red,line width=1pt,solid]
  table[row sep=crcr]{%
0.05	0.0125\\
0.0357142857142857	0.00455539358600583\\
0.025	0.0015625\\
0.0192307692307692	0.000711197086936732\\
0.0138888888888889	0.000267918381344307\\
0.0104166666666667	0.00011302806712963\\
0.00757575757575758	4.34788657929154e-05\\
0.00555555555555556	1.71467764060357e-05\\
0.00409836065573771	6.8838360920077e-06\\
0.00297619047619048	2.63622314004967e-06\\
};
\addlegendentry{$h^3$};

\end{axis}
\end{tikzpicture}%
		} 
  \subfigure[fourth-order staggered FD scheme]{
		\setlength\figureheight{0.33\linewidth} 
		\setlength\figurewidth{0.33\linewidth} 
		\tikzset{external/export next=false}
%
%
\begin{tikzpicture}

\begin{axis}[%
width=0.951\figurewidth,
height=\figureheight,
at={(0\figurewidth,0\figureheight)},
scale only axis,
xmode=log,
xmin=0.001,
xmax=0.1,
xminorticks=true,
xlabel={\scriptsize$h$},
ymode=log,
ymin=1e-07,
ymax=1,
yminorticks=true,
ylabel={\scriptsize$\|\mathbold{U}-\mathbold{U}_h\|$},
axis background/.style={fill=white},
legend style={at={(0.005,0.99)},anchor=north west,legend cell align=left,align=left,draw=white!15!black,draw=none,fill=none},
legend style={font=\scriptsize},
ylabel style={yshift=-5pt},xlabel style={yshift=2.5pt},tick label style={font=\tiny} 
]
\addplot [color=black,line width=1pt,solid,mark=o,mark options={solid}]
  table[row sep=crcr]{%
0.05	0.24065824891718\\
0.0357142857142857	0.0655033167283471\\
0.025	0.01026366867242\\
0.0192307692307692	0.00441840150919359\\
0.0138888888888889	0.000977887133516098\\
0.0104166666666667	0.000392137465385911\\
0.00757575757575758	0.000112801015494268\\
0.00555555555555556	3.87065285868723e-05\\
0.00409836065573771	1.49724972757781e-05\\
0.00297619047619048	4.52980349041621e-06\\
};
\addlegendentry{$L^\infty$};

\addplot [color=blue,line width=1pt,solid,mark=o,mark options={solid}]
  table[row sep=crcr]{%
0.05	0.0372797528497772\\
0.0357142857142857	0.0100410712912017\\
0.025	0.00185063941218045\\
0.0192307692307692	0.000495013939740834\\
0.0138888888888889	0.00012854267961707\\
0.0104166666666667	4.37503233913714e-05\\
0.00757575757575758	1.10700185082256e-05\\
0.00555555555555556	3.61778087092278e-06\\
0.00409836065573771	1.04489325369673e-06\\
0.00297619047619048	2.96808424952396e-07\\
};
\addlegendentry{$L^1$};

\addplot [color=red,line width=1pt,densely dashed]
  table[row sep=crcr]{%
0.05	0.3125\\
0.0357142857142857	0.113884839650146\\
0.025	0.0390625\\
0.0192307692307692	0.0177799271734183\\
0.0138888888888889	0.00669795953360768\\
0.0104166666666667	0.00282570167824074\\
0.00757575757575758	0.00108697164482288\\
0.00555555555555556	0.000428669410150892\\
0.00409836065573771	0.000172095902300193\\
0.00297619047619048	6.59055785012418e-05\\
};
\addlegendentry{$h^3$};

\addplot [color=red,line width=1pt,solid]
  table[row sep=crcr]{%
0.05	0.0625\\
0.0357142857142857	0.0162692628071637\\
0.025	0.00390625\\
0.0192307692307692	0.00136768670564756\\
0.0138888888888889	0.000372108862978204\\
0.0104166666666667	0.000117737569926698\\
0.00757575757575758	3.29385346916026e-05\\
0.00555555555555556	9.52598689224204e-06\\
0.00409836065573771	2.82124430000316e-06\\
0.00297619047619048	7.84590220252878e-07\\
};
\addlegendentry{$h^4$};

\end{axis}
\end{tikzpicture}%
		\setlength\figureheight{0.33\linewidth} 
		\setlength\figurewidth{0.33\linewidth} 
		\tikzset{external/export next=false}
%
%
\begin{tikzpicture}

\begin{axis}[%
width=0.951\figurewidth,
height=\figureheight,
at={(0\figurewidth,0\figureheight)},
scale only axis,
xmode=log,
xmin=0.001,
xmax=0.1,
xminorticks=true,
xlabel={\scriptsize$h$},
ymode=log,
ymin=1e-06,
ymax=100,
yminorticks=true,
ylabel={\scriptsize$\|\tilde{\nabla}^D\cdot\mathbold{H}_h\|$},
axis background/.style={fill=white},
legend style={at={(0.65,0.45)},anchor=north west,legend cell align=left,align=left,draw=white!15!black,draw=none,fill=none},
legend style={font=\scriptsize},
ylabel style={yshift=-5pt},xlabel style={yshift=2.5pt},tick label style={font=\tiny} 
]
\addplot [color=black,line width=1pt,solid,mark=o,mark options={solid}]
  table[row sep=crcr]{%
0.05	11.3607841946349\\
0.0357142857142857	4.0962890336975\\
0.025	1.245600591072\\
0.0192307692307692	0.51018474378737\\
0.0138888888888889	0.185514848126637\\
0.0104166666666667	0.0825214701220602\\
0.00757575757575758	0.0459705908345995\\
0.00555555555555556	0.0200575714758742\\
0.00409836065573771	0.0122413262425027\\
0.00297619047619048	0.00470806260022982\\
};
\addlegendentry{$L^\infty$};

\addplot [color=blue,line width=1pt,solid,mark=o,mark options={solid}]
  table[row sep=crcr]{%
0.05	0.330436598210592\\
0.0357142857142857	0.0807459885304712\\
0.025	0.0176675481830632\\
0.0192307692307692	0.00549472546360746\\
0.0138888888888889	0.00146422908973815\\
0.0104166666666667	0.000396111465307835\\
0.00757575757575758	0.0001565699710908\\
0.00555555555555556	5.68032500125252e-05\\
0.00409836065573771	2.11763180603505e-05\\
0.00297619047619048	7.09919943680722e-06\\
};
\addlegendentry{$L^1$};

\addplot [color=red,line width=1pt,densely dashed]
  table[row sep=crcr]{%
0.05	12.500000000000002\\
0.0357142857142857	6.377551020408157\\
0.025	3.125000000000000\\
0.0192307692307692	1.849112426035497\\
0.0138888888888889	0.964506172839508\\
0.0104166666666667	0.542534722222226\\
0.00757575757575758	0.286960514233242\\
0.00555555555555556	0.154320987654321\\
0.00409836065573771	0.083982800322494\\
0.00297619047619048	0.044288548752835\\
};
\addlegendentry{$h^2$};

\addplot [color=red,line width=1pt,solid]
  table[row sep=crcr]{%
0.05	0.625\\
0.0357142857142857	0.227769679300292\\
0.025	0.078125\\
0.0192307692307692	0.0355598543468366\\
0.0138888888888889	0.0133959190672154\\
0.0104166666666667	0.00565140335648148\\
0.00757575757575758	0.00217394328964577\\
0.00555555555555556	0.000857338820301783\\
0.00409836065573771	0.000344191804600385\\
0.00297619047619048	0.000131811157002484\\
};
\addlegendentry{$h^3$};

\end{axis}
\end{tikzpicture}%
		} 
  \caption{Convergence plots for the problem with a manufactured solution and the 5-star interface using fourth-order approximations of correction functions, and either the second-order or fourth-order staggered FD scheme. It is recalled that $\mathbold{U} = [H_x,H_y,E_z]^T$.}
   \label{fig:convPlotStarInterface}
\end{figure}
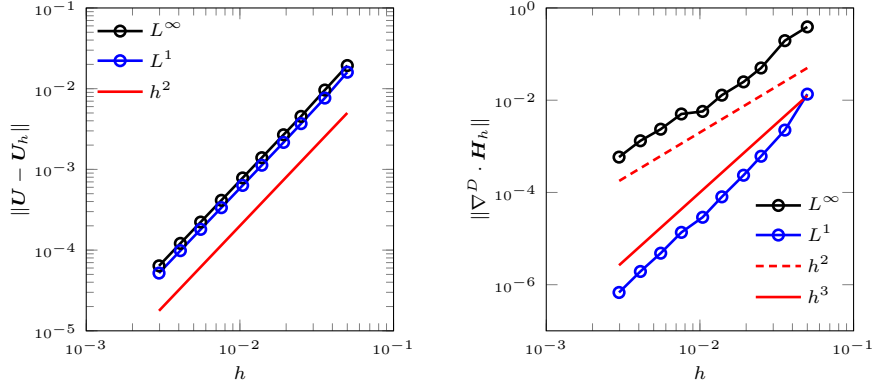
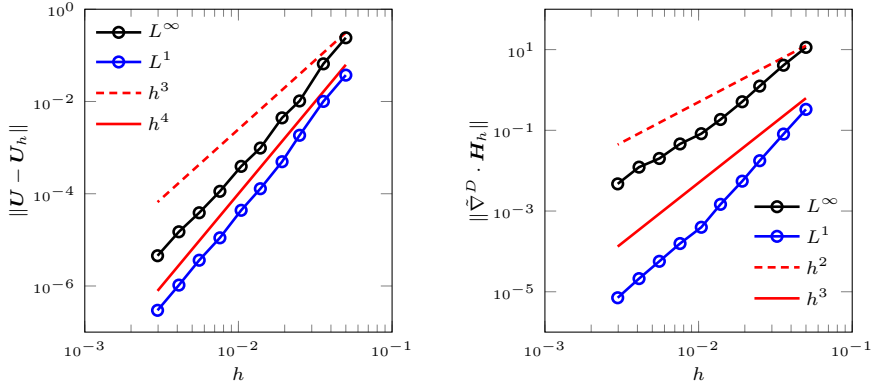
\begin{figure}    
	\centering
	\subfigure[$H_x$]{
	\stackunder[5pt]{
	\stackunder[5pt]{\includegraphics[width=2.25in]{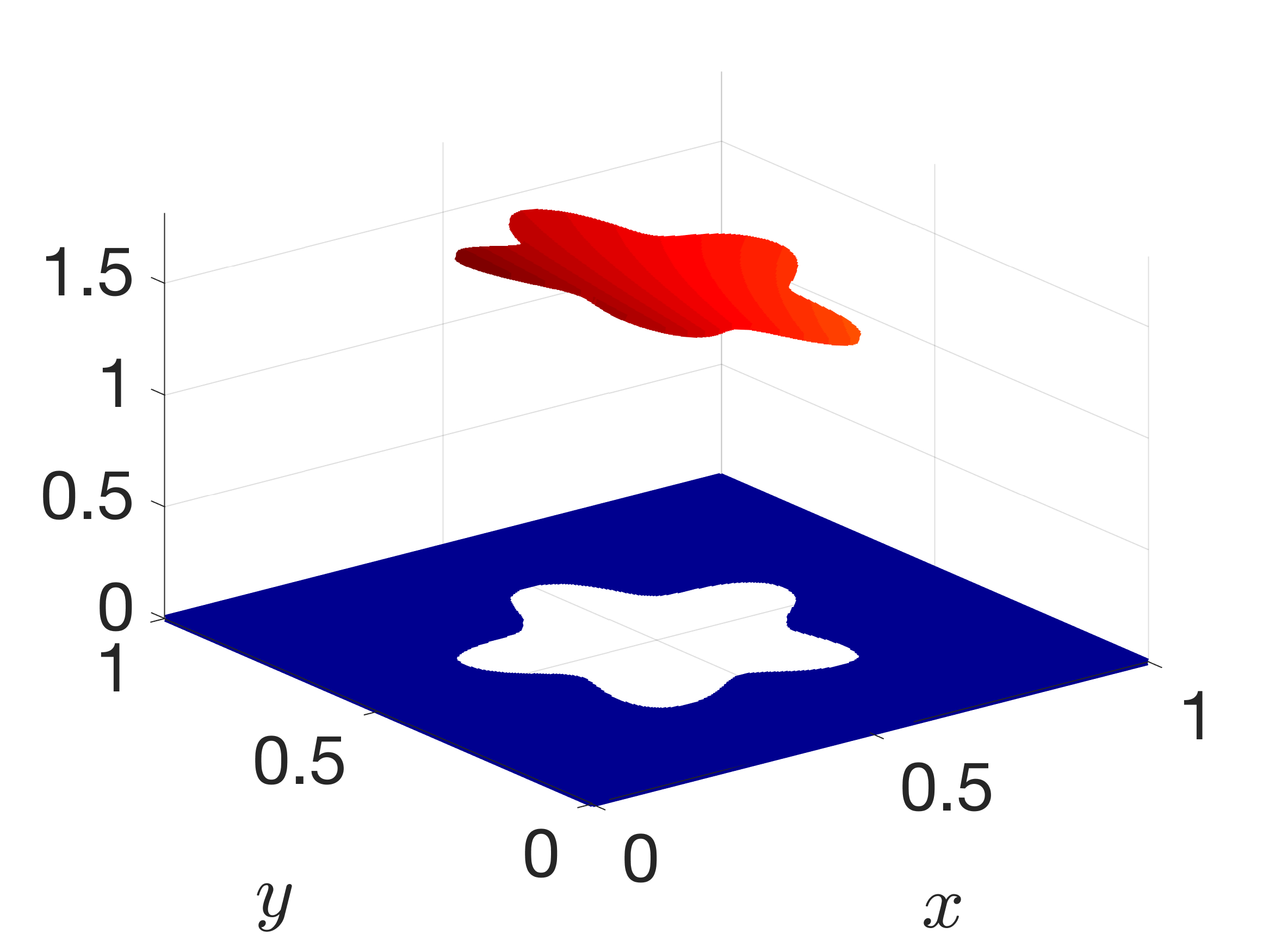}}{\footnotesize$t=0.25$}
	\stackunder[5pt]{	\includegraphics[width=2.25in]{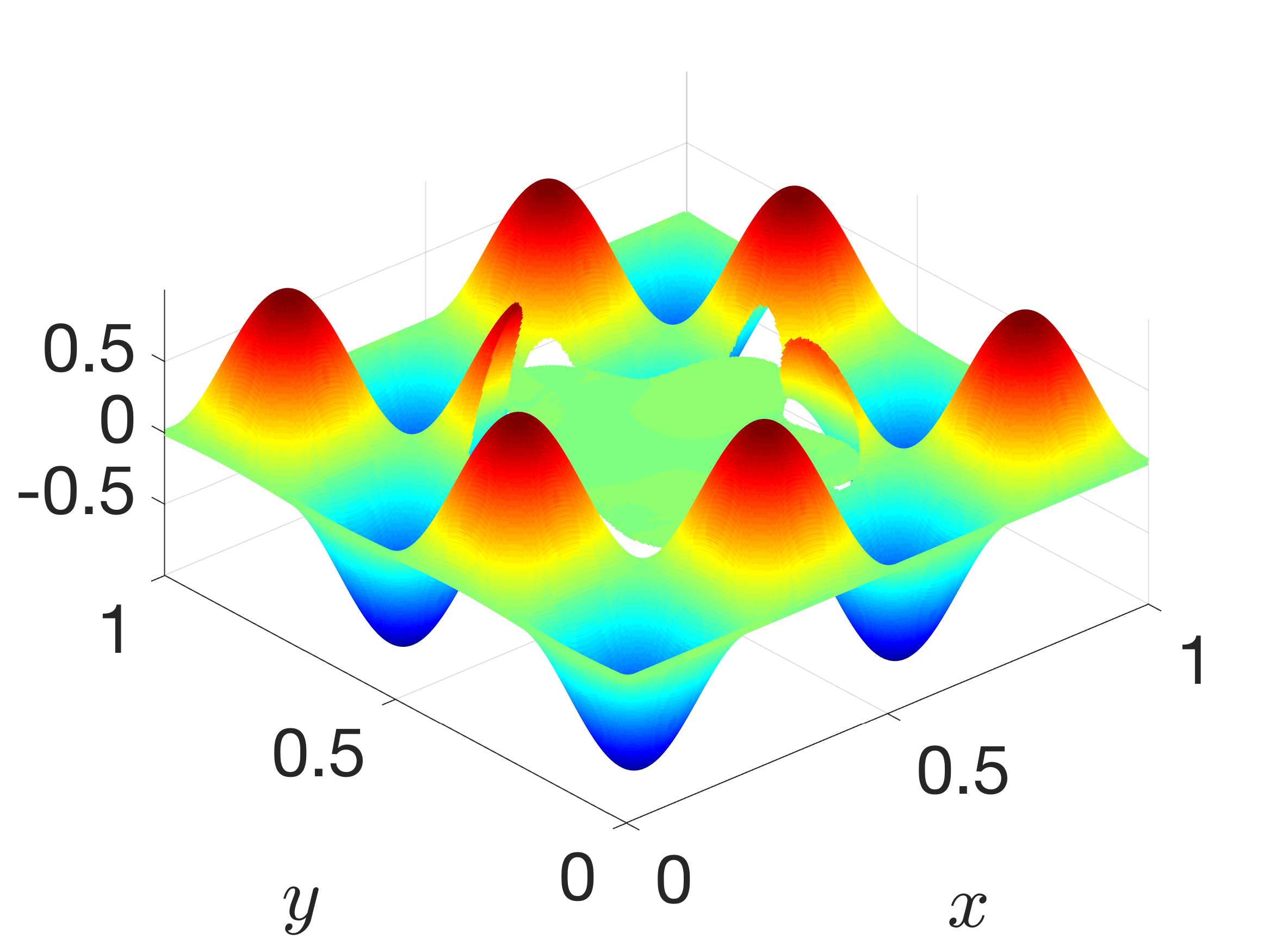}}{\footnotesize$t=0.5$}
	} 
	{\phantom{A}}
	}
\subfigure[$H_y$]{
	\stackunder[5pt]{
	\stackunder[5pt]{\includegraphics[width=2.25in]{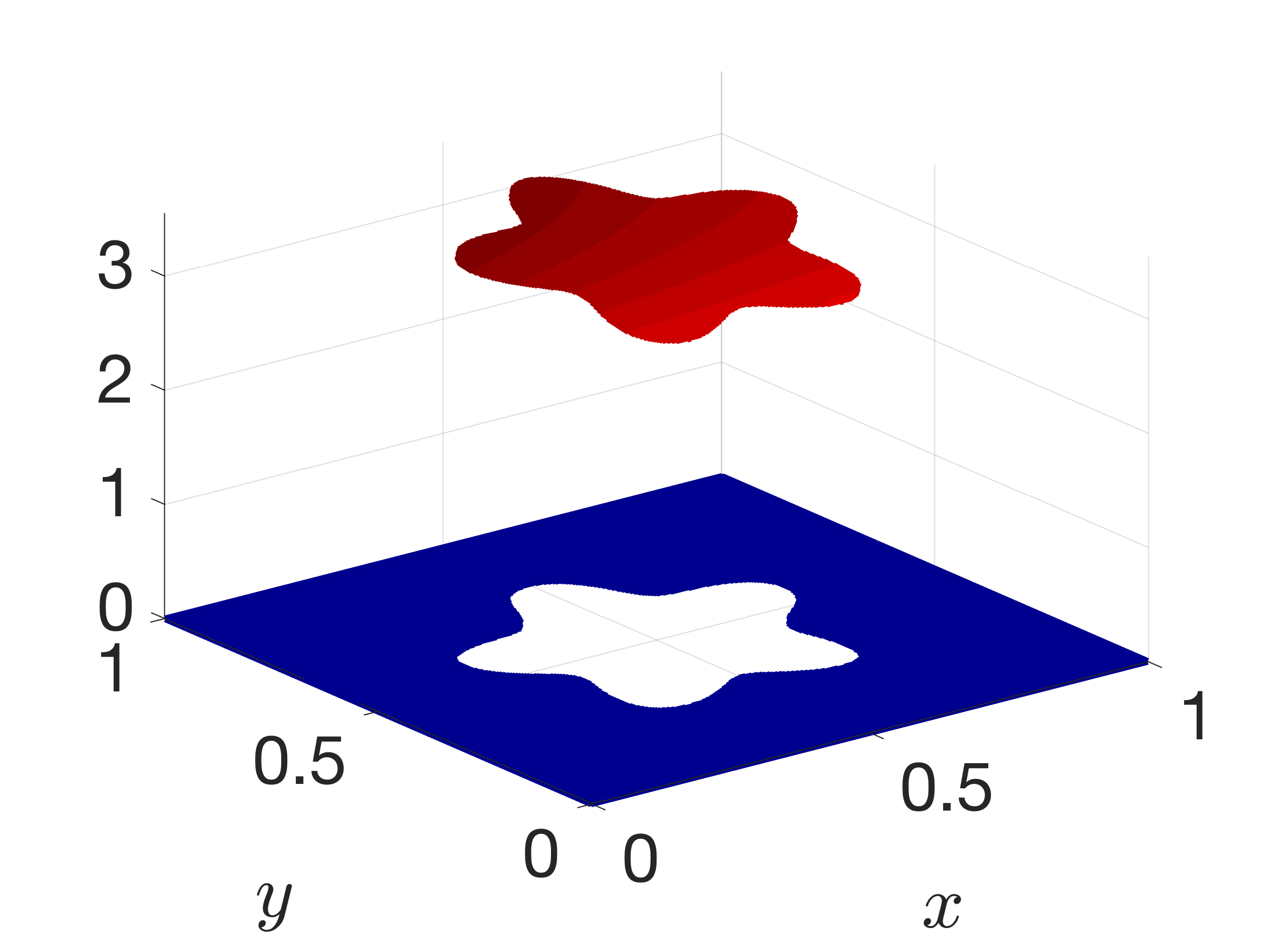}}{\footnotesize$t=0.25$}
	\stackunder[5pt]{\includegraphics[width=2.25in]{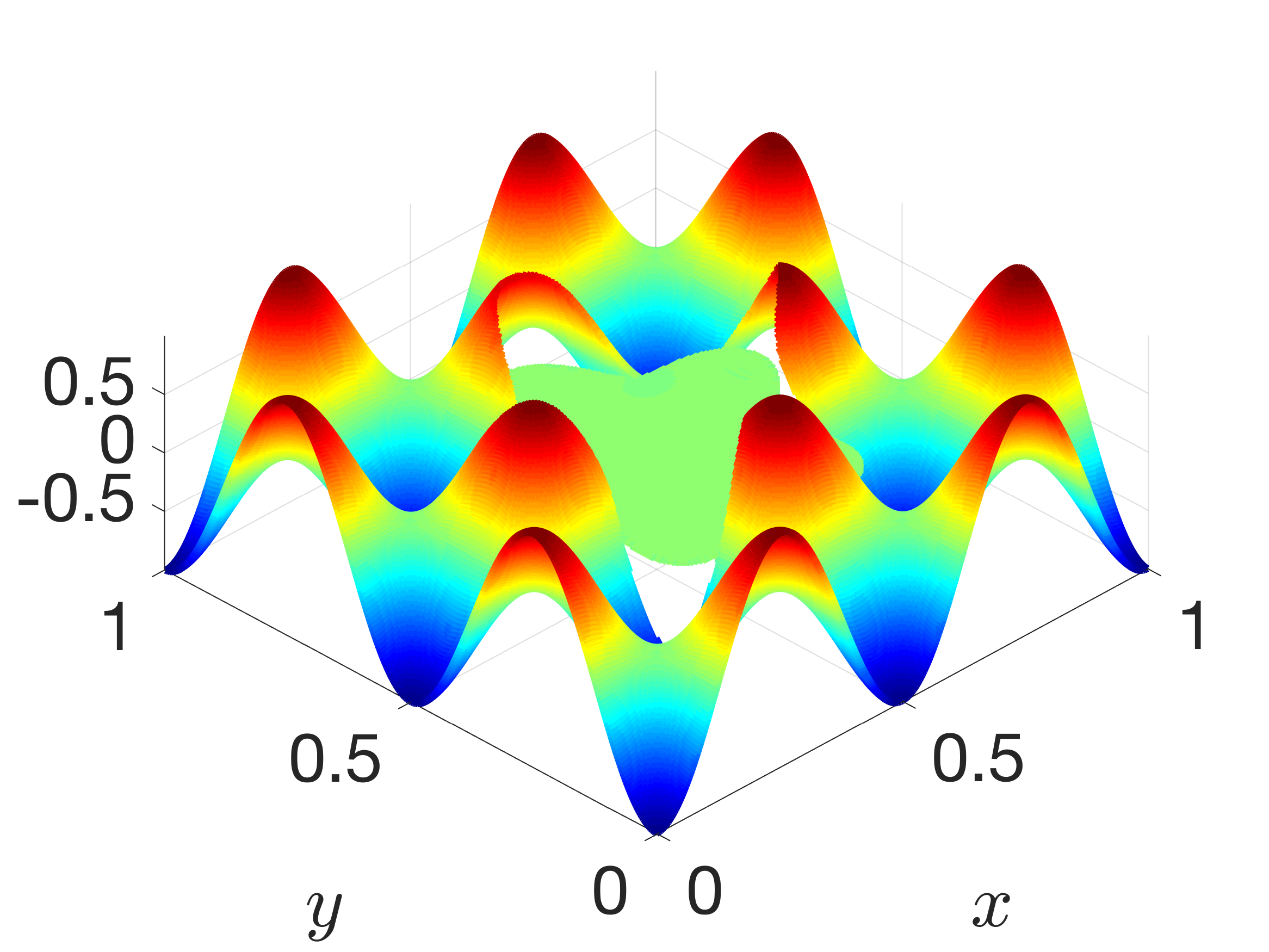}}{\footnotesize$t=0.5$}
	} 
	{\phantom{A}}
	}
	\subfigure[$E_z$]{
	\stackunder[5pt]{
	\stackunder[5pt]{\includegraphics[width=2.25in]{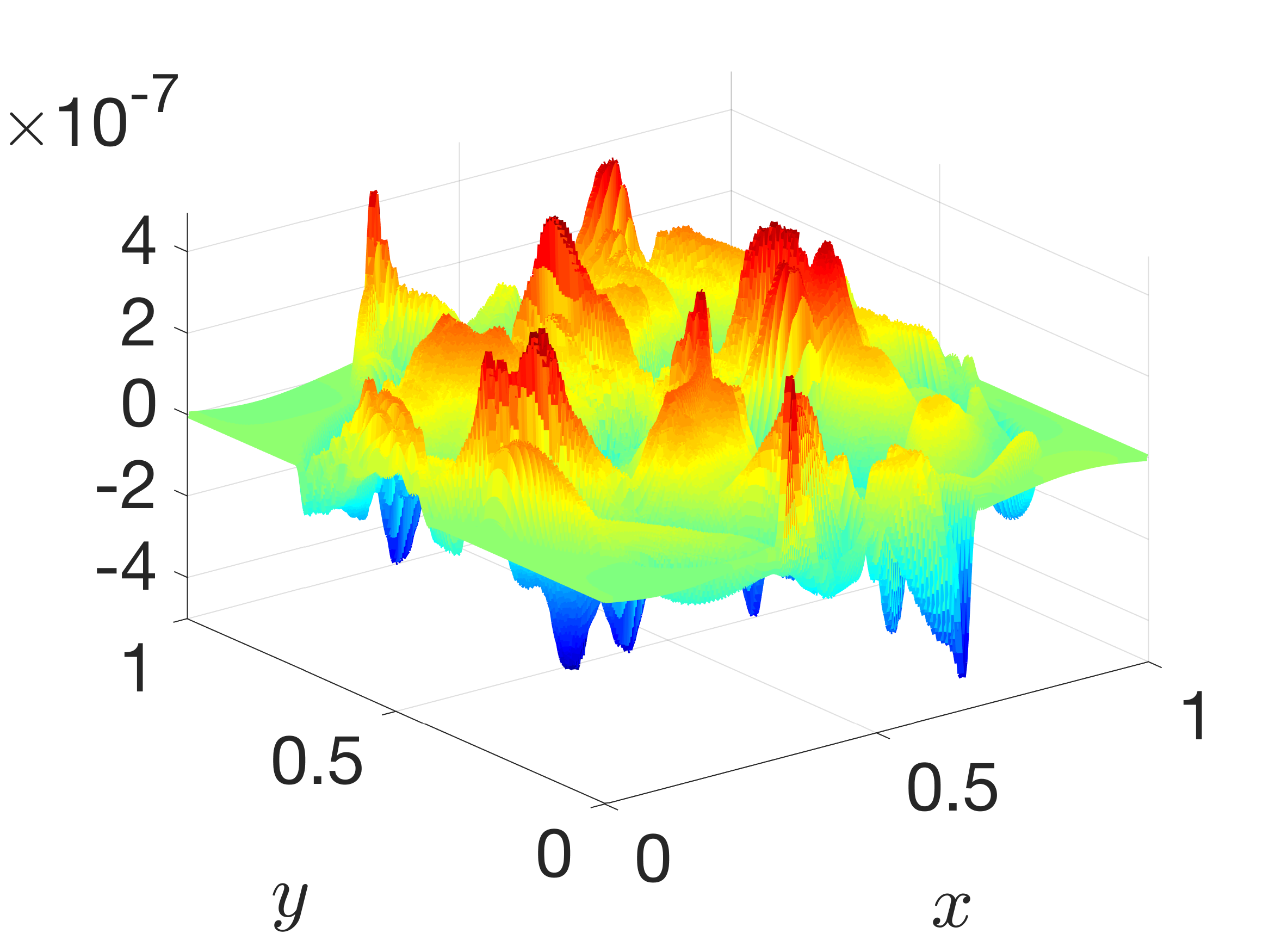}}{\footnotesize$t=0.25$}
	\stackunder[5pt]{\includegraphics[width=2.25in]{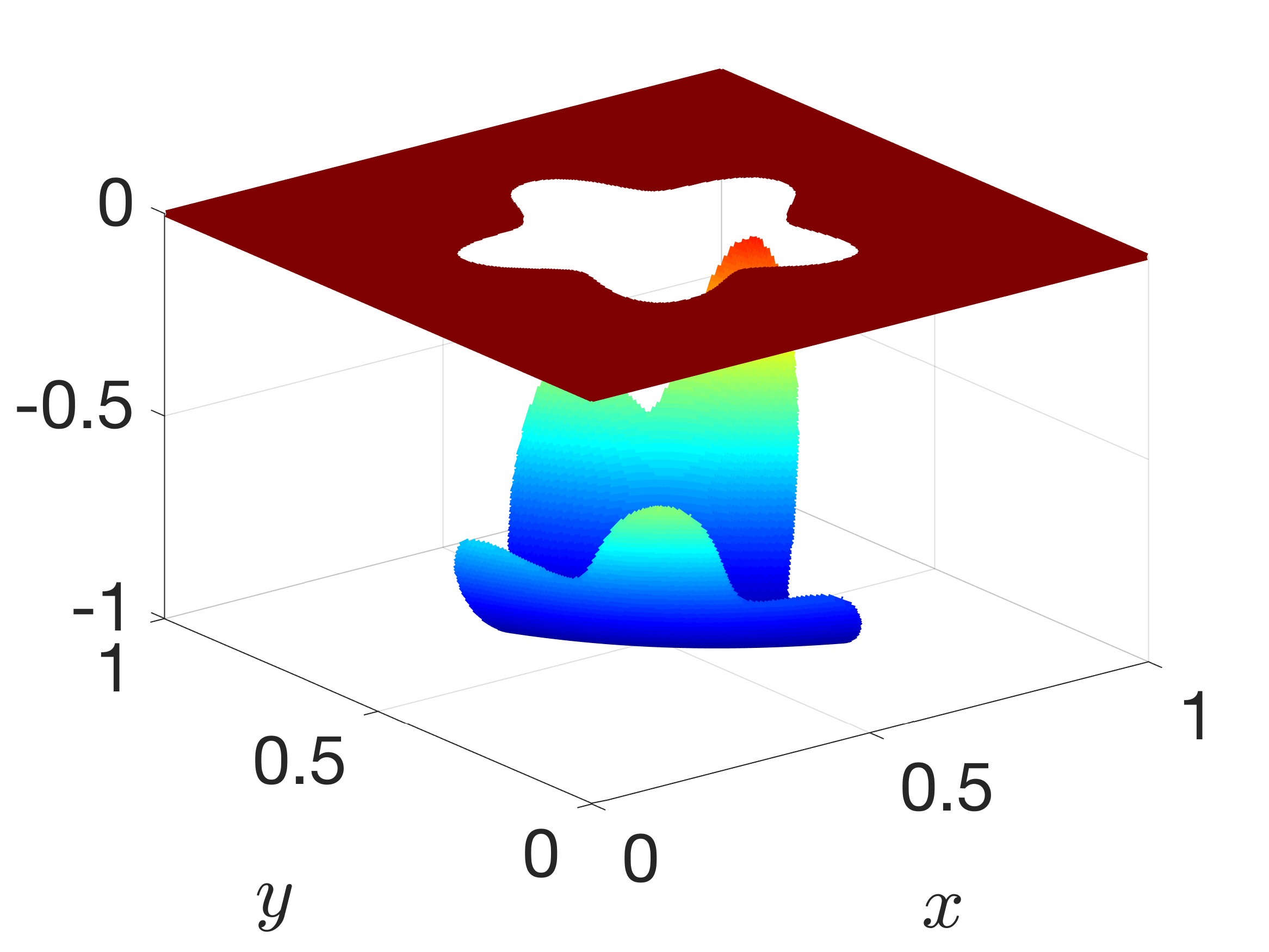}}{\footnotesize$t=0.5$}
	} 
	{\phantom{A}}
	}
       \caption{The components $H_x$, $H_y$ and $E_z$ at two time steps with $h = \tfrac{1}{336}$ and $\Delta t = \tfrac{h}{2}$ using the fourth-order staggered FD scheme with the CFM for the problem with a manufactured solution and the 5-star interface.}
        \label{fig:plotStarInterfaceFields}
\end{figure}

\FloatBarrier
\subsection{3-star interface}
We use the manufactured solution of the circular interface problem.
However, 
	a more complex interface is considered.
The level set function is the same than the 5-star interface but with $\omega=3$, $x_0 = y_0 = 0.55$, $r_0 = 0.25$ and
	$\epsilon = 0.15$.
The interface is illustrated in \cref{fig:triStarInterfaceGeo}.
\cref{fig:convPlotTriStarInterface} illustrates the convergence plots for both schemes using the $L^\infty$-norm and 
	the $L^1$-norm.
\cref{fig:plotTriStarInterfaceFields} shows the magnetic field and the electric field 
	at two different time steps using $h=\tfrac{1}{336}$, 
	and the fourth-order staggered FD scheme with the CFM.
As for previous interfaces, 
	the computed orders of convergence are in agreement with the theory and there is no spurious oscillation within 
	the computed solutions.
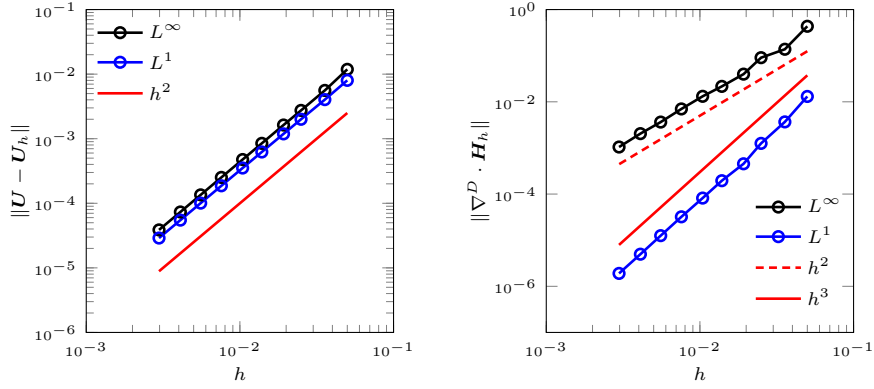
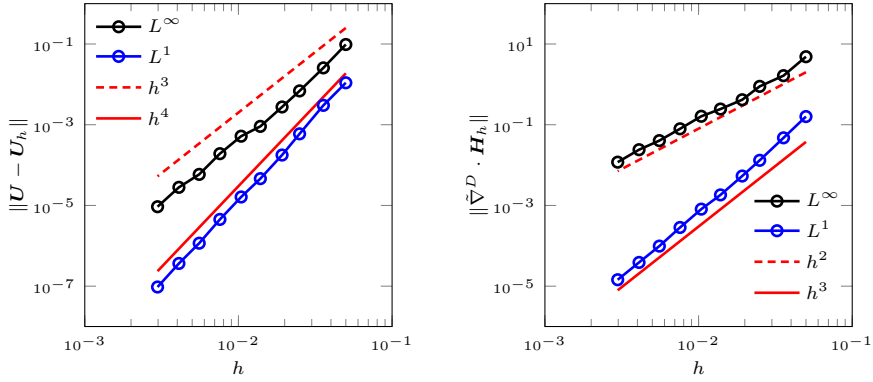
\begin{figure} 
 \centering
  \subfigure[second-order staggered FD scheme]{
		\setlength\figureheight{0.33\linewidth} 
		\setlength\figurewidth{0.33\linewidth} 
		\tikzset{external/export next=false}
%
%
\begin{tikzpicture}

\begin{axis}[%
width=0.951\figurewidth,
height=\figureheight,
at={(0\figurewidth,0\figureheight)},
scale only axis,
xmode=log,
xmin=0.001,
xmax=0.1,
xminorticks=true,
xlabel={\scriptsize$h$},
ymode=log,
ymin=1e-06,
ymax=0.1,
yminorticks=true,
ylabel={\scriptsize$\|\mathbold{U}-\mathbold{U}_h\|$},
axis background/.style={fill=white},
legend style={at={(0.005,0.99)},anchor=north west,legend cell align=left,align=left,draw=white!15!black,draw=none,fill=none},
legend style={font=\scriptsize},
ylabel style={yshift=-5pt},xlabel style={yshift=2.5pt},tick label style={font=\tiny} 
]
\addplot [color=black,line width=1pt,solid,mark=o,mark options={solid}]
  table[row sep=crcr]{%
0.05	0.0118576094074392\\
0.0357142857142857	0.00559567672797879\\
0.025	0.00274284953325326\\
0.0192307692307692	0.00162912992917532\\
0.0138888888888889	0.000847699942177268\\
0.0104166666666667	0.000475879657852167\\
0.00757575757575758	0.000251605913459655\\
0.00555555555555556	0.000135014780802985\\
0.00409836065573771	7.33458185769415e-05\\
0.00297619047619048	3.8717633249512e-05\\
};
\addlegendentry{$L^\infty$};

\addplot [color=blue,line width=1pt,solid,mark=o,mark options={solid}]
  table[row sep=crcr]{%
0.05	0.00801272579515834\\
0.0357142857142857	0.00402975315772282\\
0.025	0.00199937040359872\\
0.0192307692307692	0.00118841616639848\\
0.0138888888888889	0.000623095477250448\\
0.0104166666666667	0.000351716477556265\\
0.00757575757575758	0.000186609370962409\\
0.00555555555555556	0.000100566948913698\\
0.00409836065573771	5.48516606957572e-05\\
0.00297619047619048	2.89628245339375e-05\\
};
\addlegendentry{$L^1$};

\addplot [color=red,line width=1pt,solid]
  table[row sep=crcr]{%
0.05	0.0025\\
0.0357142857142857	0.00127551020408163\\
0.025	0.000625\\
0.0192307692307692	0.000369822485207101\\
0.0138888888888889	0.000192901234567901\\
0.0104166666666667	0.000108506944444444\\
0.00757575757575758	5.73921028466483e-05\\
0.00555555555555556	3.08641975308642e-05\\
0.00409836065573771	1.67965600644988e-05\\
0.00297619047619048	8.85770975056689e-06\\
};
\addlegendentry{$h^2$};

\end{axis}
\end{tikzpicture}%
		\setlength\figureheight{0.33\linewidth} 
		\setlength\figurewidth{0.33\linewidth} 
		\tikzset{external/export next=false}
%
%
\begin{tikzpicture}

\begin{axis}[%
width=0.951\figurewidth,
height=\figureheight,
at={(0\figurewidth,0\figureheight)},
scale only axis,
xmode=log,
xmin=0.001,
xmax=0.1,
xminorticks=true,
xlabel={\scriptsize$h$},
ymode=log,
ymin=1e-07,
ymax=1,
yminorticks=true,
ylabel={\scriptsize$\|\nabla^D\cdot\mathbold{H}_h\|$},
axis background/.style={fill=white},
legend style={at={(0.65,0.45)},anchor=north west,legend cell align=left,align=left,draw=white!15!black,draw=none,fill=none},
legend style={font=\scriptsize},
ylabel style={yshift=-5pt},xlabel style={yshift=2.5pt},tick label style={font=\tiny} 
]
\addplot [color=black,line width=1pt,solid,mark=o,mark options={solid}]
  table[row sep=crcr]{%
0.05	0.434149844201329\\
0.0357142857142857	0.137808971545013\\
0.025	0.0905448906266741\\
0.0192307692307692	0.0398155131831004\\
0.0138888888888889	0.0215994317780712\\
0.0104166666666667	0.0131540499836547\\
0.00757575757575758	0.00700459909040774\\
0.00555555555555556	0.0036356043511887\\
0.00409836065573771	0.00204138923481878\\
0.00297619047619048	0.00104651699939495\\
};
\addlegendentry{$L^\infty$};

\addplot [color=blue,line width=1pt,solid,mark=o,mark options={solid}]
  table[row sep=crcr]{%
0.05	0.0130696501697566\\
0.0357142857142857	0.00366861851807057\\
0.025	0.00125461146440794\\
0.0192307692307692	0.000453234103350705\\
0.0138888888888889	0.000194818046733771\\
0.0104166666666667	8.18286175996446e-05\\
0.00757575757575758	3.21065634802953e-05\\
0.00555555555555556	1.25412673710266e-05\\
0.00409836065573771	4.96570368714429e-06\\
0.00297619047619048	1.90383129572669e-06\\
};
\addlegendentry{$L^1$};

\addplot [color=red,line width=1pt,densely dashed]
  table[row sep=crcr]{%
0.05	0.125\\
0.0357142857142857	0.0637755102040816\\
0.025	0.03125\\
0.0192307692307692	0.018491124260355\\
0.0138888888888889	0.00964506172839506\\
0.0104166666666667	0.00542534722222222\\
0.00757575757575758	0.00286960514233242\\
0.00555555555555556	0.00154320987654321\\
0.00409836065573771	0.00083982800322494\\
0.00297619047619048	0.000442885487528345\\
};
\addlegendentry{$h^2$};

\addplot [color=red,line width=1pt,solid]
  table[row sep=crcr]{%
0.05	0.0375\\
0.0357142857142857	0.0136661807580175\\
0.025	0.0046875\\
0.0192307692307692	0.0021335912608102\\
0.0138888888888889	0.000803755144032922\\
0.0104166666666667	0.000339084201388889\\
0.00757575757575758	0.000130436597378746\\
0.00555555555555556	5.1440329218107e-05\\
0.00409836065573771	2.06515082760231e-05\\
0.00297619047619048	7.90866942014901e-06\\
};
\addlegendentry{$h^3$};

\end{axis}

\end{tikzpicture}%
		} 
  \subfigure[fourth-order staggered FD scheme]{
		\setlength\figureheight{0.33\linewidth} 
		\setlength\figurewidth{0.33\linewidth} 
		\tikzset{external/export next=false}
%
%
\begin{tikzpicture}

\begin{axis}[%
width=0.951\figurewidth,
height=\figureheight,
at={(0\figurewidth,0\figureheight)},
scale only axis,
xmode=log,
xmin=0.001,
xmax=0.1,
xminorticks=true,
xlabel={\scriptsize$h$},
ymode=log,
ymin=1e-08,
ymax=1,
yminorticks=true,
ylabel={\scriptsize$\|\mathbold{U}-\mathbold{U}_h\|$},
axis background/.style={fill=white},
legend style={at={(0.005,0.99)},anchor=north west,legend cell align=left,align=left,draw=white!15!black,draw=none,fill=none},
legend style={font=\scriptsize},
ylabel style={yshift=-5pt},xlabel style={yshift=2.5pt},tick label style={font=\tiny} 
]
\addplot [color=black,line width=1pt,solid,mark=o,mark options={solid}]
  table[row sep=crcr]{%
0.05	0.0972360896646167\\
0.0357142857142857	0.0256080067709567\\
0.025	0.00688178370793626\\
0.0192307692307692	0.00275534371235602\\
0.0138888888888889	0.000909949231679086\\
0.0104166666666667	0.000519734514534513\\
0.00757575757575758	0.000191875920187065\\
0.00555555555555556	5.88368111862358e-05\\
0.00409836065573771	2.81219892865998e-05\\
0.00297619047619048	9.3045795721558e-06\\
};
\addlegendentry{$L^\infty$};

\addplot [color=blue,line width=1pt,solid,mark=o,mark options={solid}]
  table[row sep=crcr]{%
0.05	0.0109542103258836\\
0.0357142857142857	0.0030200598748306\\
0.025	0.000594680374737356\\
0.0192307692307692	0.000177403292584226\\
0.0138888888888889	4.61307228358092e-05\\
0.0104166666666667	1.62162673009344e-05\\
0.00757575757575758	4.50567838714874e-06\\
0.00555555555555556	1.16016301468282e-06\\
0.00409836065573771	3.65176506430162e-07\\
0.00297619047619048	9.53713626981072e-08\\
};
\addlegendentry{$L^1$};

\addplot [color=red,line width=1pt,densely dashed]
  table[row sep=crcr]{%
0.05	0.25\\
0.0357142857142857	0.0911078717201166\\
0.025	0.03125\\
0.0192307692307692	0.0142239417387346\\
0.0138888888888889	0.00535836762688614\\
0.0104166666666667	0.00226056134259259\\
0.00757575757575758	0.000869577315858308\\
0.00555555555555556	0.000342935528120713\\
0.00409836065573771	0.000137676721840154\\
0.00297619047619048	5.27244628009934e-05\\
};
\addlegendentry{$h^3$};

\addplot [color=red,line width=1pt,solid]
  table[row sep=crcr]{%
0.05	0.01875\\
0.0357142857142857	0.0048807788421491\\
0.025	0.001171875\\
0.0192307692307692	0.000410306011694268\\
0.0138888888888889	0.000111632658893461\\
0.0104166666666667	3.53212709780093e-05\\
0.00757575757575758	9.88156040748077e-06\\
0.00555555555555556	2.85779606767261e-06\\
0.00409836065573771	8.46373290000947e-07\\
0.00297619047619048	2.35377066075863e-07\\
};
\addlegendentry{$h^4$};

\end{axis}
\end{tikzpicture}%
		\setlength\figureheight{0.33\linewidth} 
		\setlength\figurewidth{0.33\linewidth} 
		\tikzset{external/export next=false}
%
%
\begin{tikzpicture}

\begin{axis}[%
width=0.951\figurewidth,
height=\figureheight,
at={(0\figurewidth,0\figureheight)},
scale only axis,
xmode=log,
xmin=0.001,
xmax=0.1,
xminorticks=true,
xlabel={\scriptsize$h$},
ymode=log,
ymin=1e-06,
ymax=100,
yminorticks=true,
ylabel={\scriptsize$\|\tilde{\nabla}^D\cdot\mathbold{H}_h\|$},
axis background/.style={fill=white},
legend style={at={(0.65,0.45)},anchor=north west,legend cell align=left,align=left,draw=white!15!black,draw=none,fill=none},
legend style={font=\scriptsize},
ylabel style={yshift=-5pt},xlabel style={yshift=2.5pt},tick label style={font=\tiny} 
]
\addplot [color=black,line width=1pt,solid,mark=o,mark options={solid}]
  table[row sep=crcr]{%
0.05	4.84162004452322\\
0.0357142857142857	1.64374819144609\\
0.025	0.892423760583952\\
0.0192307692307692	0.415828725233098\\
0.0138888888888889	0.244581888656228\\
0.0104166666666667	0.161926732453821\\
0.00757575757575758	0.0792726438107252\\
0.00555555555555556	0.040383873397559\\
0.00409836065573771	0.0240524830785489\\
0.00297619047619048	0.0118167439920853\\
};
\addlegendentry{$L^\infty$};

\addplot [color=blue,line width=1pt,solid,mark=o,mark options={solid}]
  table[row sep=crcr]{%
0.05	0.159414487051912\\
0.0357142857142857	0.0473492474329155\\
0.025	0.0131277993999985\\
0.0192307692307692	0.00535677697868786\\
0.0138888888888889	0.00182556912799519\\
0.0104166666666667	0.000809824723517902\\
0.00757575757575758	0.000284409046179301\\
0.00555555555555556	9.83132391354671e-05\\
0.00409836065573771	3.87323224081587e-05\\
0.00297619047619048	1.44362248122291e-05\\
};
\addlegendentry{$L^1$};

\addplot [color=red,line width=1pt,densely dashed]
  table[row sep=crcr]{%
0.05	2\\
0.0357142857142857	1.02040816326531\\
0.025	0.5\\
0.0192307692307692	0.29585798816568\\
0.0138888888888889	0.154320987654321\\
0.0104166666666667	0.0868055555555556\\
0.00757575757575758	0.0459136822773186\\
0.00555555555555556	0.0246913580246914\\
0.00409836065573771	0.013437248051599\\
0.00297619047619048	0.00708616780045351\\
};
\addlegendentry{$h^2$};

\addplot [color=red,line width=1pt,solid]
  table[row sep=crcr]{%
0.05	0.0375\\
0.0357142857142857	0.0136661807580175\\
0.025	0.0046875\\
0.0192307692307692	0.0021335912608102\\
0.0138888888888889	0.000803755144032922\\
0.0104166666666667	0.000339084201388889\\
0.00757575757575758	0.000130436597378746\\
0.00555555555555556	5.1440329218107e-05\\
0.00409836065573771	2.06515082760231e-05\\
0.00297619047619048	7.90866942014901e-06\\
};
\addlegendentry{$h^3$};

\end{axis}

\end{tikzpicture}%
		} 
  \caption{Convergence plots for the problem with a manufactured solution and the 3-star interface using fourth-order approximations of correction functions, and either the second-order or fourth-order staggered FD scheme. It is recalled that $\mathbold{U} = [H_x,H_y,E_z]^T$.}
   \label{fig:convPlotTriStarInterface}
\end{figure}
\begin{figure}     
	\centering
	\subfigure[$H_x$]{
	\stackunder[5pt]{
	\stackunder[5pt]{\includegraphics[width=2.25in]{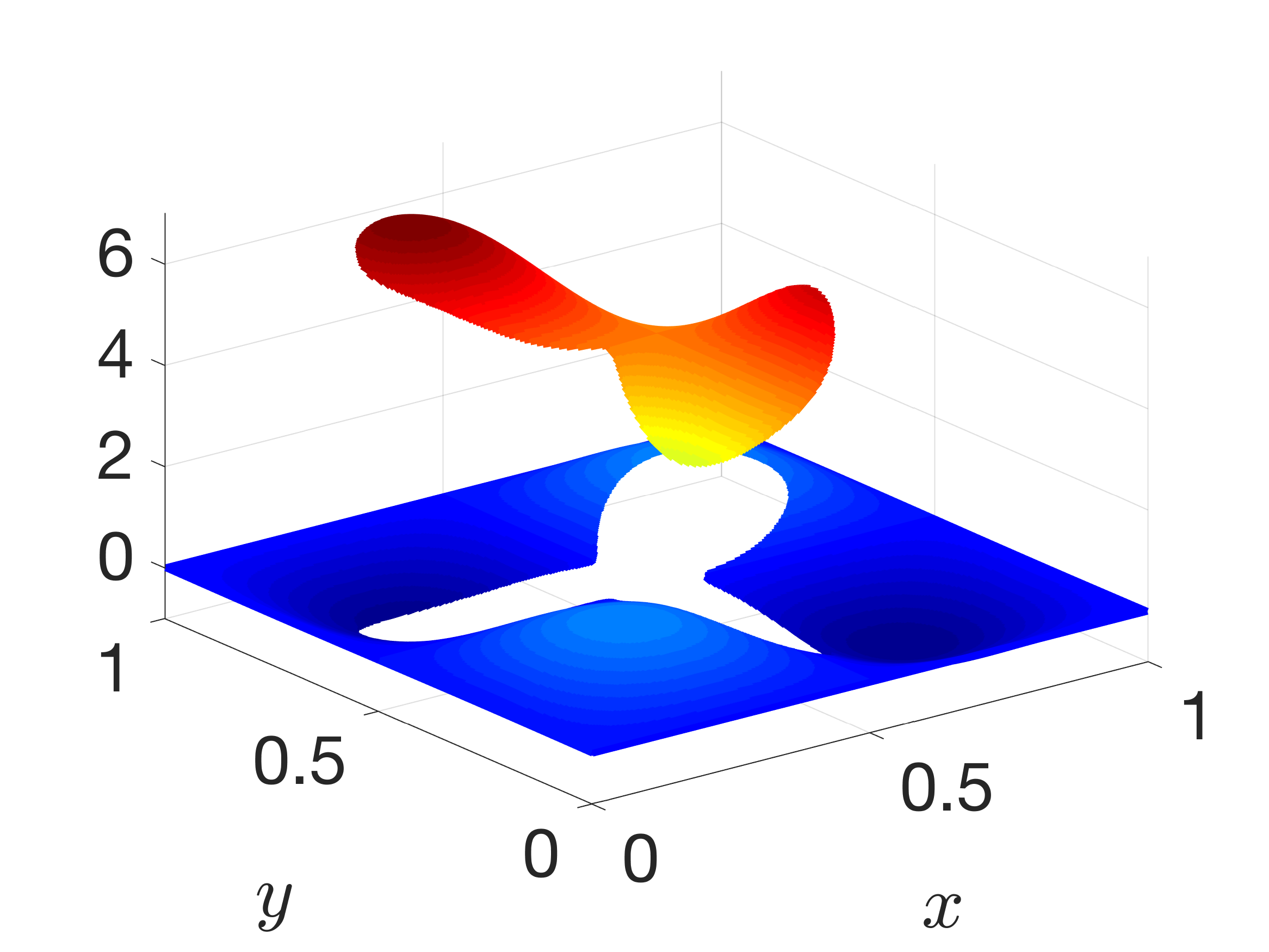}}{\footnotesize$t=0.25$}
	\stackunder[5pt]{	\includegraphics[width=2.25in]{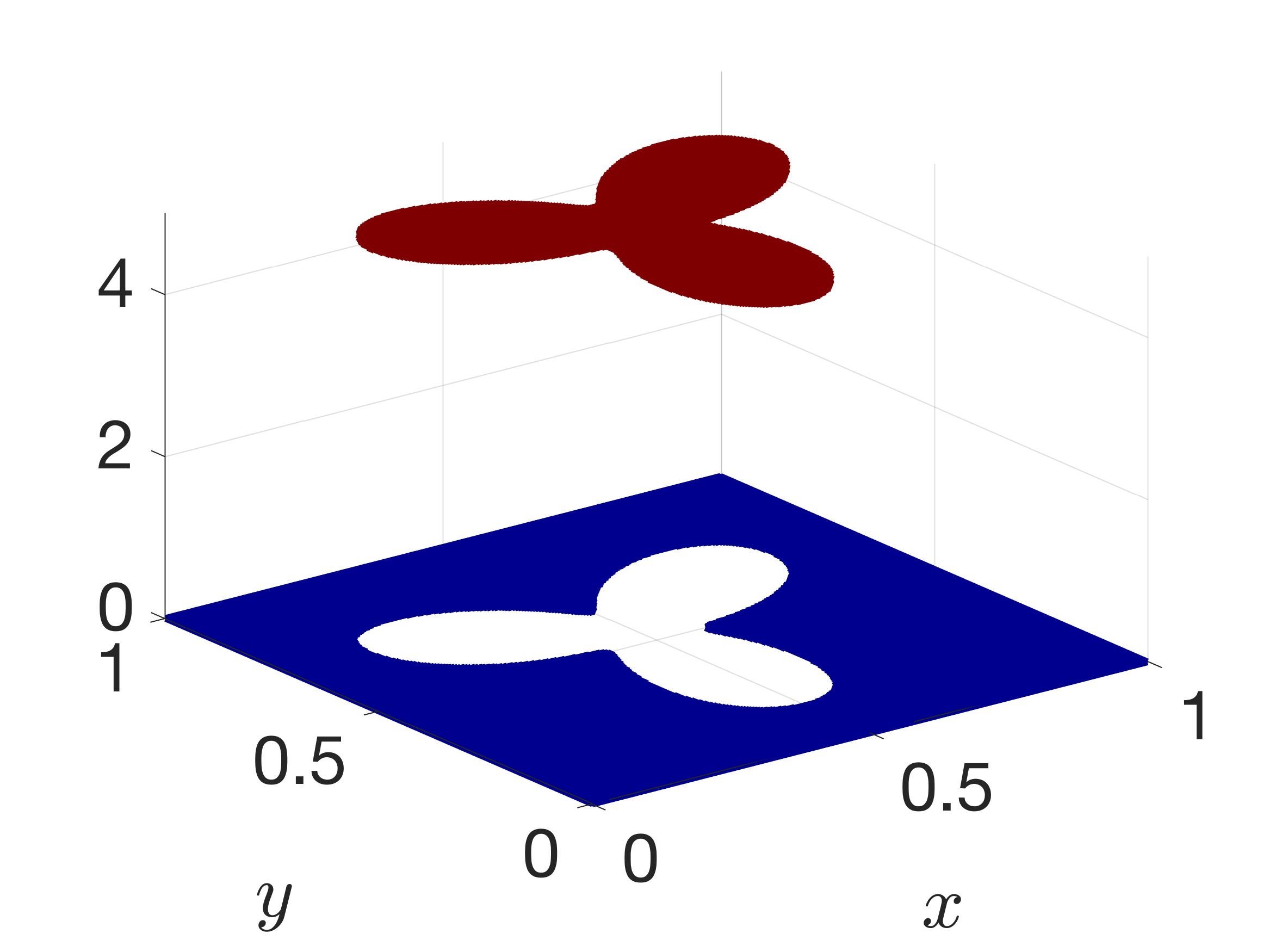}}{\footnotesize$t=0.5$}
	} 
	{\phantom{A}}
	}
\subfigure[$H_y$]{
	\stackunder[5pt]{
	\stackunder[5pt]{\includegraphics[width=2.25in]{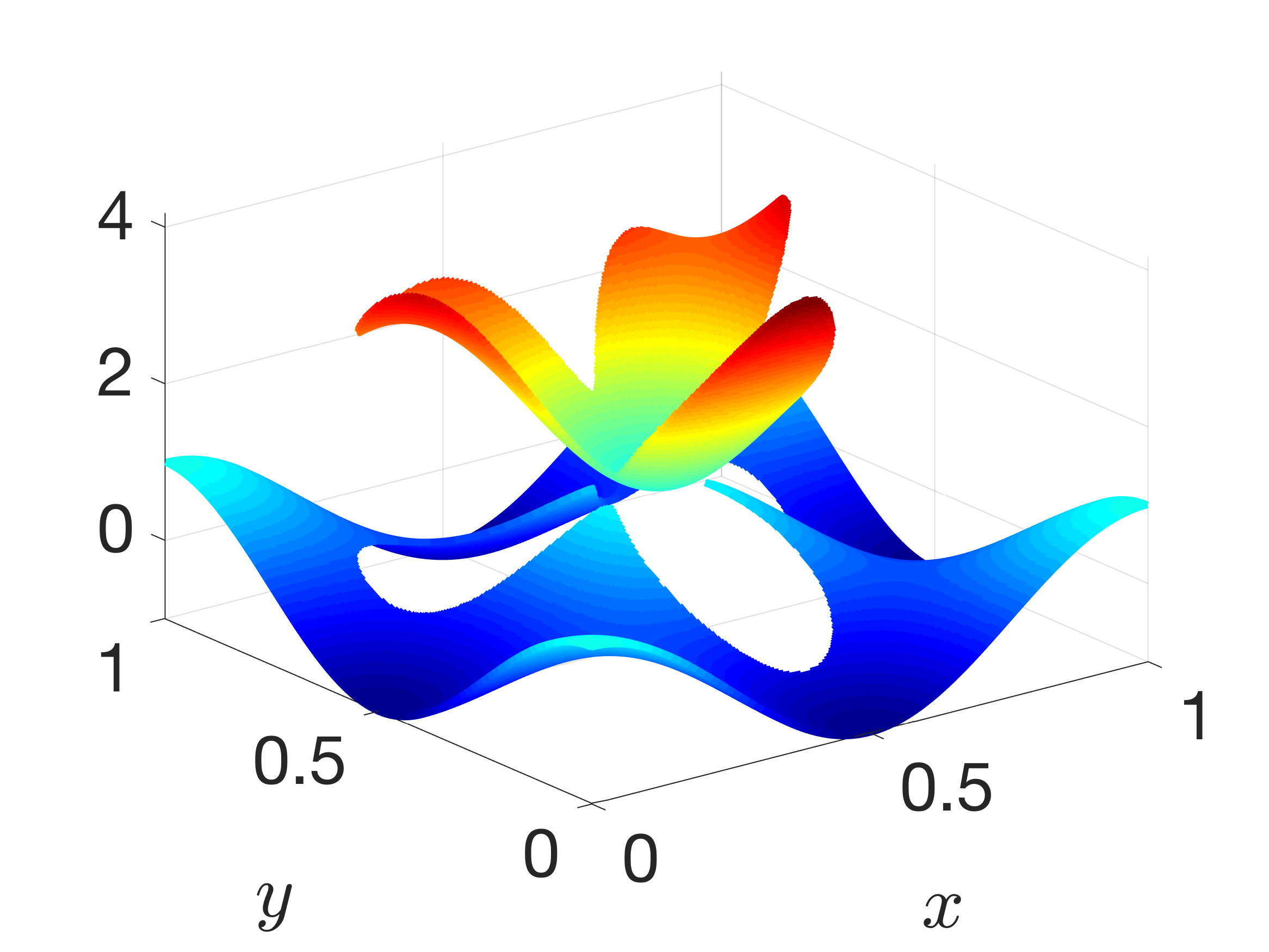}}{\footnotesize$t=0.25$}
	\stackunder[5pt]{\includegraphics[width=2.25in]{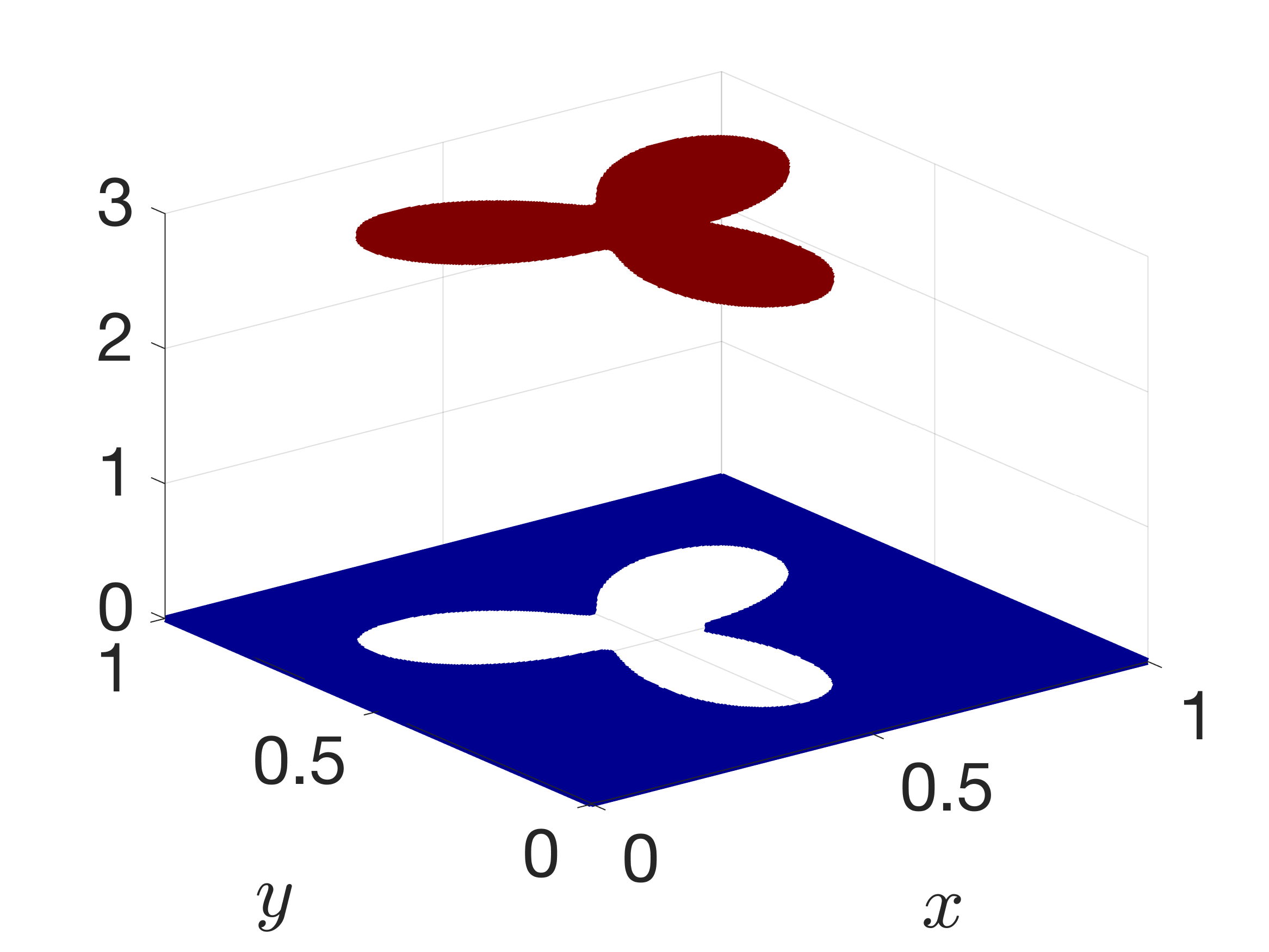}}{\footnotesize$t=0.5$}
	} 
	{\phantom{A}}
	}
	\subfigure[$E_z$]{
	\stackunder[5pt]{
	\stackunder[5pt]{\includegraphics[width=2.25in]{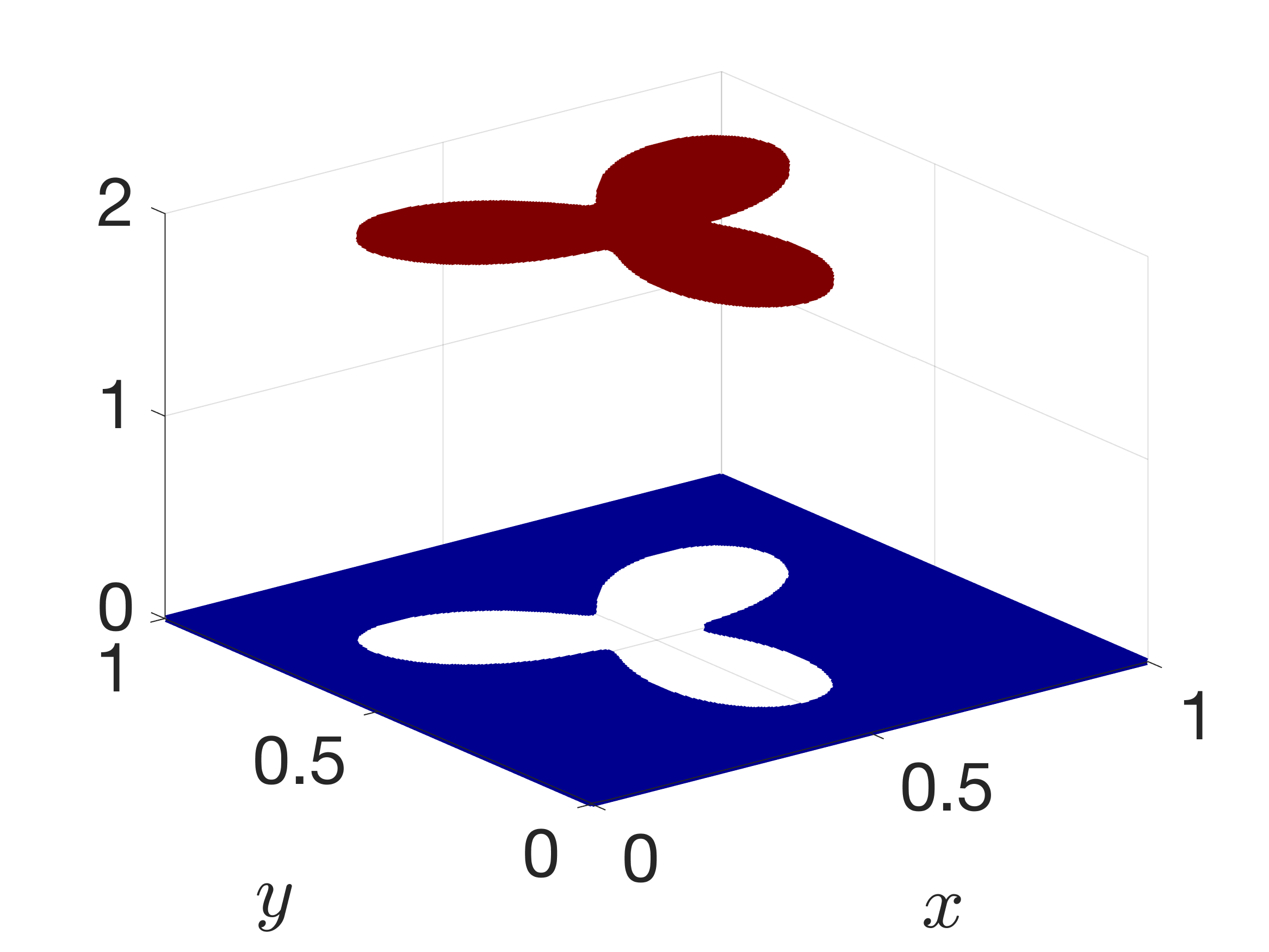}}{\footnotesize$t=0.25$}
	\stackunder[5pt]{\includegraphics[width=2.25in]{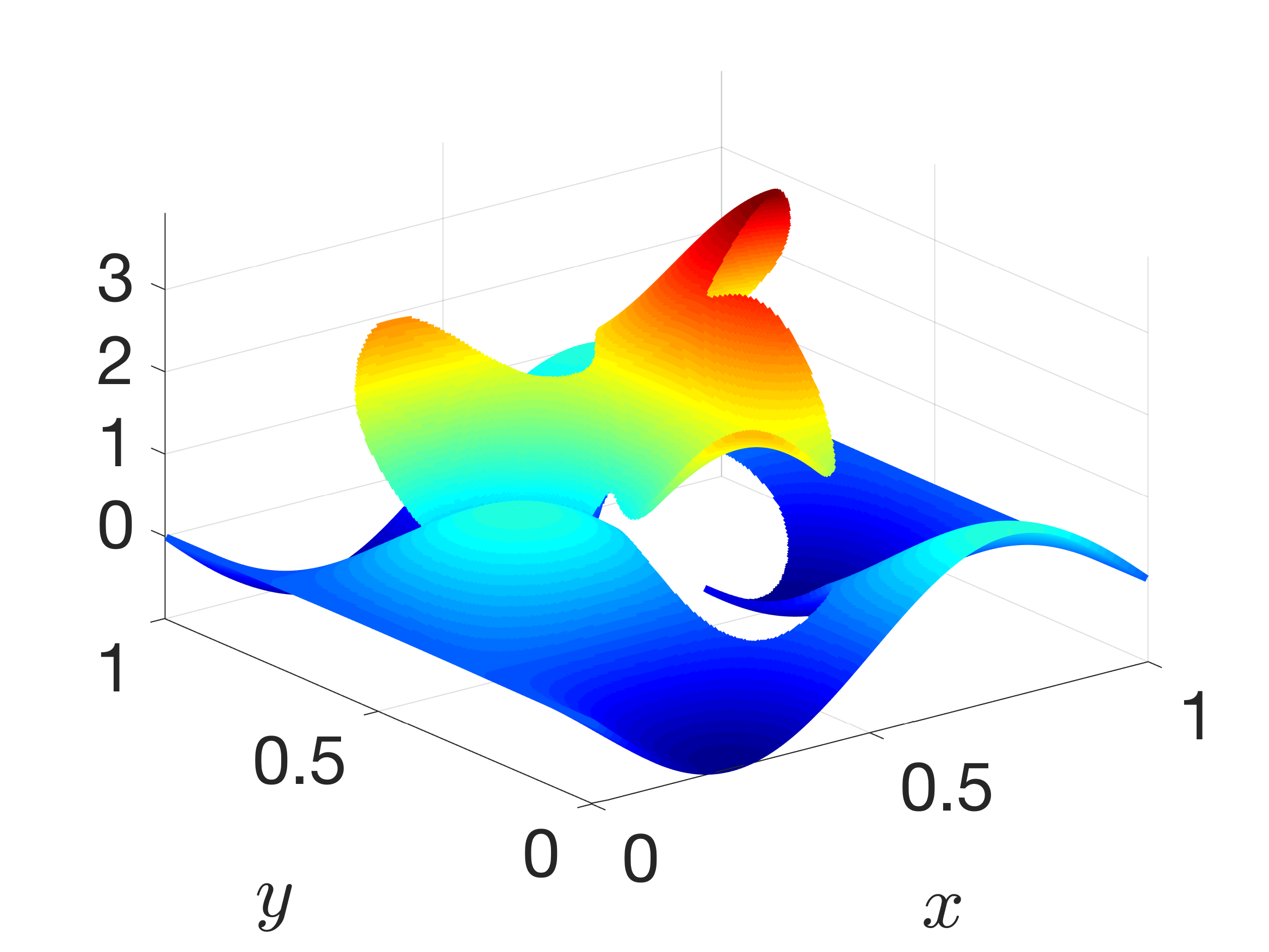}}{\footnotesize$t=0.5$}
	} 
	{\phantom{A}}
	}
       \caption{The components $H_x$, $H_y$ and $E_z$ at two time steps with $h = \tfrac{1}{336}$ and $\Delta t = \tfrac{h}{2}$ using the fourth-order staggered FD scheme with the CFM for the problem with a manufactured solution and the 3-star interface.}
       \label{fig:plotTriStarInterfaceFields}
\end{figure}

\FloatBarrier

\subsection{A remark on non-smooth interface}
 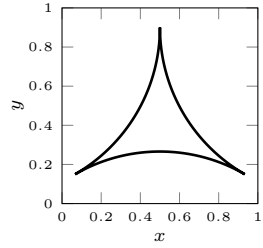
\begin{figure}
 \centering
		\setlength\figureheight{0.2\linewidth} 
		\setlength\figurewidth{0.2\linewidth} 
		\tikzset{external/export next=false}
%
%
\begin{tikzpicture}

\begin{axis}[%
width=\figurewidth,
height=\figureheight,
at={(0\figurewidth,0\figureheight)},
scale only axis,
xmin=0,
xmax=1,
xminorticks=true,
xlabel={\scriptsize$x$},
ymin=0,
ymax=1,
yminorticks=true,
ylabel={\scriptsize $y$},
axis background/.style={fill=white},
legend style={at={(0.01,0.99)},anchor=north west,legend cell align=left,align=left,draw=white!15!black,draw=none,fill=none},
legend style={font=\scriptsize},
ylabel style={yshift=-5pt},xlabel style={yshift=2.5pt},tick label style={font=\tiny} 
]
\addplot [color=black,line width=1pt,solid]
  table[row sep=crcr]{%
0.5	0.9\\
0.50043600708589	0.87252280358109\\
0.501743589321385	0.845073274369832\\
0.503921430082012	0.817679051715344\\
0.506967336466732	0.790367719277731\\
0.510878241506007	0.763166777253675\\
0.515650207249975	0.736103614686068\\
0.521278428733635	0.709205481885571\\
0.527757238815036	0.682499462991857\\
0.535080113881612	0.656012448702179\\
0.543239680418903	0.629771109194718\\
0.552227722435065	0.603801867273976\\
0.56203518973367	0.578130871765249\\
0.572652207026491	0.552783971184976\\
0.584068083877082	0.527786687713478\\
0.596271325465136	0.503164191496281\\
0.609249644160798	0.478941275299916\\
0.622989971897267	0.455142329547704\\
0.637478473329229	0.431791317760667\\
0.652700559763882	0.408911752428295\\
0.668640903850505	0.38652667133346\\
0.685283455013809	0.364658614355321\\
0.7026114556155	0.343329600773581\\
0.720607457827806	0.322561107096932\\
0.739253341201953	0.302374045438038\\
0.758530330913928	0.282788742456804\\
0.778419016669131	0.263824918893159\\
0.798899372246899	0.245501669709936\\
0.81995077566521	0.227837444865867\\
0.841552029945273	0.210850030738038\\
0.863681384455088	0.194556532212515\\
0.886316556810488	0.178973355461171\\
0.909434755311614	0.164116191422066\\
0.933012701892219	0.15\\
0.908998748225723	0.163361004996844\\
0.884572967489104	0.175953370168997\\
0.859759954373077	0.187764416072141\\
0.834584693478541	0.198782249984231\\
0.809072534159204	0.208995777880458\\
0.783249164996924	0.218394715603996\\
0.757140587935496	0.22696959922127\\
0.730773092098892	0.234711794551339\\
0.704173227320341	0.241613505859784\\
0.677367777408902	0.24766778370835\\
0.650383733180436	0.252868531952443\\
0.623248265280139	0.25721051387943\\
0.595988696824013	0.260689357481564\\
0.5686324758868	0.263301559858227\\
0.541207147864094	0.265044490743052\\
0.513740327736469	0.265916395152381\\
0.486259672263531	0.265916395152381\\
0.458792852135906	0.265044490743052\\
0.4313675241132	0.263301559858227\\
0.404011303175987	0.260689357481564\\
0.376751734719861	0.25721051387943\\
0.349616266819565	0.252868531952443\\
0.322632222591098	0.24766778370835\\
0.295826772679659	0.241613505859784\\
0.269226907901108	0.234711794551339\\
0.242859412064504	0.22696959922127\\
0.216750835003076	0.218394715603996\\
0.190927465840797	0.208995777880458\\
0.165415306521459	0.198782249984231\\
0.140240045626923	0.187764416072141\\
0.115427032510897	0.175953370168997\\
0.0910012517742768	0.163361004996844\\
0.0669872981077808	0.15\\
0.0905652446883862	0.164116191422066\\
0.113683443189512	0.178973355461171\\
0.136318615544912	0.194556532212514\\
0.158447970054727	0.210850030738038\\
0.18004922433479	0.227837444865867\\
0.201100627753101	0.245501669709936\\
0.221580983330869	0.263824918893159\\
0.241469669086072	0.282788742456805\\
0.260746658798048	0.302374045438038\\
0.279392542172194	0.322561107096932\\
0.2973885443845	0.343329600773581\\
0.314716544986191	0.364658614355321\\
0.331359096149495	0.38652667133346\\
0.347299440236118	0.408911752428295\\
0.362521526670771	0.431791317760667\\
0.377010028102733	0.455142329547704\\
0.390750355839202	0.478941275299915\\
0.403728674534864	0.503164191496281\\
0.415931916122918	0.527786687713478\\
0.427347792973509	0.552783971184977\\
0.43796481026633	0.578130871765248\\
0.447772277564935	0.603801867273976\\
0.456760319581097	0.629771109194719\\
0.464919886118388	0.656012448702179\\
0.472242761184964	0.682499462991857\\
0.478721571266365	0.709205481885571\\
0.484349792750025	0.736103614686069\\
0.489121758493993	0.763166777253675\\
0.493032663533268	0.790367719277731\\
0.496078569917988	0.817679051715344\\
0.498256410678615	0.845073274369832\\
0.499563992914109	0.87252280358109\\
0.5	0.9\\
};
\end{axis}
\end{tikzpicture}%
  \caption{A non-smooth interface.}
  \label{fig:sharpInterfaceGeo}
\end{figure}
This subsection studies the robustness of the proposed treatment of interface conditions by considering 
	a non-smooth interface illustrated in \cref{fig:sharpInterfaceGeo}.
This interface is built using three circles of radius $r=\tfrac{\sqrt{3}}{2}$ centered at $(0.5+r, 0.9)$,
	$(0.5-r,0.9)$ and $(0.5,-0.6)$. 
We note that the normal $\hat{\mathbold{n}}$ might not be well defined at the cusps.
We use the same manufactured solution than the circular interface problem.
\cref{fig:convPlotSharpInterface} illustrates the convergence plots for the fourth-order staggered FD scheme 
	with the CFM using the $L^\infty$-norm and the $L^1$-norm.
Using $L^1$-norm,
	$H_x$, $H_y$ and $E_z$ converge to fourth-order 
	while a third-order convergence is obtained for the divergence of the magnetic field.
Even though we use smooth manufactured solutions in each subdomain, 
	we highlight that this kind of solutions is misleading for interfaces with cusps or corners.
Indeed, 
	solutions of Maxwell interface problems with such interfaces have a singular part \cite{Costabel1999,Assous2000}, 
	which is not treated in this work. 
While it is unclear whether the computed solutions in \cref{fig:plotSharpInterfaceFields} represent accurately 
	the actual solution (regular and singular parts). 
It is interesting to note that the proposed numerical approach is robust, 
	converges to the prescribed order and provides solutions that are devoid of spurious oscillations.
It is therefore clear that much work is required to assess whether the numerical approach presented in this paper 
	can be used or modified to compute solutions of problems with non-smooth interfaces. 
\begin{figure}
 \centering
		\setlength\figureheight{0.33\linewidth} 
		\setlength\figurewidth{0.33\linewidth} 
		\tikzset{external/export next=false}
%
%
\begin{tikzpicture}

\begin{axis}[%
width=0.951\figurewidth,
height=\figureheight,
at={(0\figurewidth,0\figureheight)},
scale only axis,
xmode=log,
xmin=0.001,
xmax=0.1,
xminorticks=true,
xlabel={\scriptsize$h$},
ymode=log,
ymin=1e-08,
ymax=1,
yminorticks=true,
ylabel={\scriptsize$\|\mathbold{U}-\mathbold{U}_h\|$},
axis background/.style={fill=white},
legend style={at={(0.005,0.99)},anchor=north west,legend cell align=left,align=left,draw=white!15!black,draw=none,fill=none},
legend style={font=\scriptsize},
ylabel style={yshift=-5pt},xlabel style={yshift=2.5pt},tick label style={font=\tiny} 
]
\addplot [color=black,line width=1pt,solid,mark=o,mark options={solid}]
  table[row sep=crcr]{%
0.05	0.141125384328186\\
0.0357142857142857	0.0704970010051316\\
0.025	0.0129762950258199\\
0.0192307692307692	0.00760348582011172\\
0.0138888888888889	0.0216353618735043\\
0.0104166666666667	0.000533498912821173\\
0.00757575757575758	0.000345382377281211\\
0.00555555555555556	9.4870266035052e-05\\
0.00409836065573771	6.44674128151834e-05\\
0.00297619047619048	2.08817219000679e-05\\
};
\addlegendentry{$L^\infty$};

\addplot [color=blue,line width=1pt,solid,mark=o,mark options={solid}]
  table[row sep=crcr]{%
0.05	0.042528783391363\\
0.0357142857142857	0.015454831434736\\
0.025	0.000916263181377296\\
0.0192307692307692	0.000282144215195102\\
0.0138888888888889	0.000164343751200037\\
0.0104166666666667	2.15909126291379e-05\\
0.00757575757575758	7.16897232137316e-06\\
0.00555555555555556	1.99399539989871e-06\\
0.00409836065573771	6.19916070092365e-07\\
0.00297619047619048	1.57203329784787e-07\\
};
\addlegendentry{$L^1$};

\addplot [color=red,line width=1pt,densely dashed]
  table[row sep=crcr]{%
0.05	0.0375\\
0.0357142857142857	0.0136661807580175\\
0.025	0.0046875\\
0.0192307692307692	0.0021335912608102\\
0.0138888888888889	0.000803755144032922\\
0.0104166666666667	0.000339084201388889\\
0.00757575757575758	0.000130436597378746\\
0.00555555555555556	5.1440329218107e-05\\
0.00409836065573771	2.06515082760231e-05\\
0.00297619047619048	7.90866942014901e-06\\
};
\addlegendentry{$h^3$};

\addplot [color=red,line width=1pt,solid]
  table[row sep=crcr]{%
0.05	0.003125\\
0.0357142857142857	0.000813463140358184\\
0.025	0.0001953125\\
0.0192307692307692	6.83843352823781e-05\\
0.0138888888888889	1.86054431489102e-05\\
0.0104166666666667	5.88687849633488e-06\\
0.00757575757575758	1.64692673458013e-06\\
0.00555555555555556	4.76299344612102e-07\\
0.00409836065573771	1.41062215000158e-07\\
0.00297619047619048	3.92295110126439e-08\\
};
\addlegendentry{$h^4$};

\end{axis}
\end{tikzpicture}%
		\setlength\figureheight{0.33\linewidth} 
		\setlength\figurewidth{0.33\linewidth} 
		\tikzset{external/export next=false}
%
%
\begin{tikzpicture}

\begin{axis}[%
width=0.951\figurewidth,
height=\figureheight,
at={(0\figurewidth,0\figureheight)},
scale only axis,
xmode=log,
xmin=0.001,
xmax=0.1,
xminorticks=true,
xlabel={\scriptsize$h$},
ymode=log,
ymin=1e-06,
ymax=10,
yminorticks=true,
ylabel={\scriptsize$\|\tilde{\nabla}^D\cdot\mathbold{H}_h\|$},
axis background/.style={fill=white},
legend style={at={(0.65,0.45)},anchor=north west,legend cell align=left,align=left,draw=white!15!black,draw=none,fill=none},
legend style={font=\scriptsize},
ylabel style={yshift=-5pt},xlabel style={yshift=2.5pt},tick label style={font=\tiny} 
]
\addplot [color=black,line width=1pt,solid,mark=o,mark options={solid}]
  table[row sep=crcr]{%
0.05	6.42445175140259\\
0.0357142857142857	5.50618116808948\\
0.025	1.21321747596235\\
0.0192307692307692	0.909945283625262\\
0.0138888888888889	5.40012427705312\\
0.0104166666666667	0.183170112374739\\
0.00757575757575758	0.106996954498534\\
0.00555555555555556	0.0534054109625686\\
0.00409836065573771	0.035874857299973\\
0.00297619047619048	0.0211572201908439\\
};
\addlegendentry{$L^\infty$};

\addplot [color=blue,line width=1pt,solid,mark=o,mark options={solid}]
  table[row sep=crcr]{%
0.05	0.155830924330879\\
0.0357142857142857	0.0464381482005214\\
0.025	0.0165293218736576\\
0.0192307692307692	0.00690280822317113\\
0.0138888888888889	0.00417162700686198\\
0.0104166666666667	0.000738166341117011\\
0.00757575757575758	0.000301791387126868\\
0.00555555555555556	0.000114096245260521\\
0.00409836065573771	3.88097811188855e-05\\
0.00297619047619048	1.51010763899193e-05\\
};
\addlegendentry{$L^1$};

\addplot [color=red,line width=1pt,densely dashed]
  table[row sep=crcr]{%
0.05	2.25\\
0.0357142857142857	1.14795918367347\\
0.025	0.5625\\
0.0192307692307692	0.332840236686391\\
0.0138888888888889	0.173611111111111\\
0.0104166666666667	0.09765625\\
0.00757575757575758	0.0516528925619835\\
0.00555555555555556	0.0277777777777778\\
0.00409836065573771	0.0151169040580489\\
0.00297619047619048	0.0079719387755102\\
};
\addlegendentry{$h^2$};

\addplot [color=red,line width=1pt,solid]
  table[row sep=crcr]{%
0.05	0.03125\\
0.0357142857142857	0.0113884839650146\\
0.025	0.00390625\\
0.0192307692307692	0.00177799271734183\\
0.0138888888888889	0.000669795953360768\\
0.0104166666666667	0.000282570167824074\\
0.00757575757575758	0.000108697164482288\\
0.00555555555555556	4.28669410150892e-05\\
0.00409836065573771	1.72095902300193e-05\\
0.00297619047619048	6.59055785012418e-06\\
};
\addlegendentry{$h^3$};

\end{axis}
\end{tikzpicture}%
  \caption{Convergence plots for the problem with a manufactured solution and a non-smooth interface using fourth-order approximations of correction functions and the fourth-order staggered FD scheme. It is recalled that $\mathbold{U} = [H_x,H_y,E_z]^T$.}
  \label{fig:convPlotSharpInterface}
\end{figure}
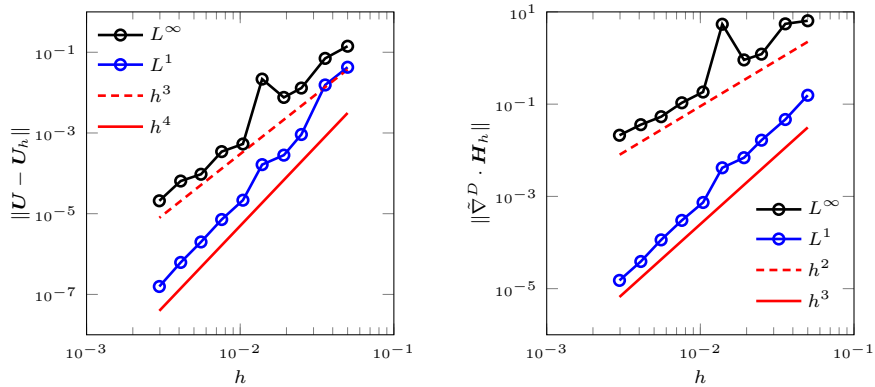
\begin{figure}
	\centering
	\subfigure[$H_x$]{
	\includegraphics[width=2.2in]{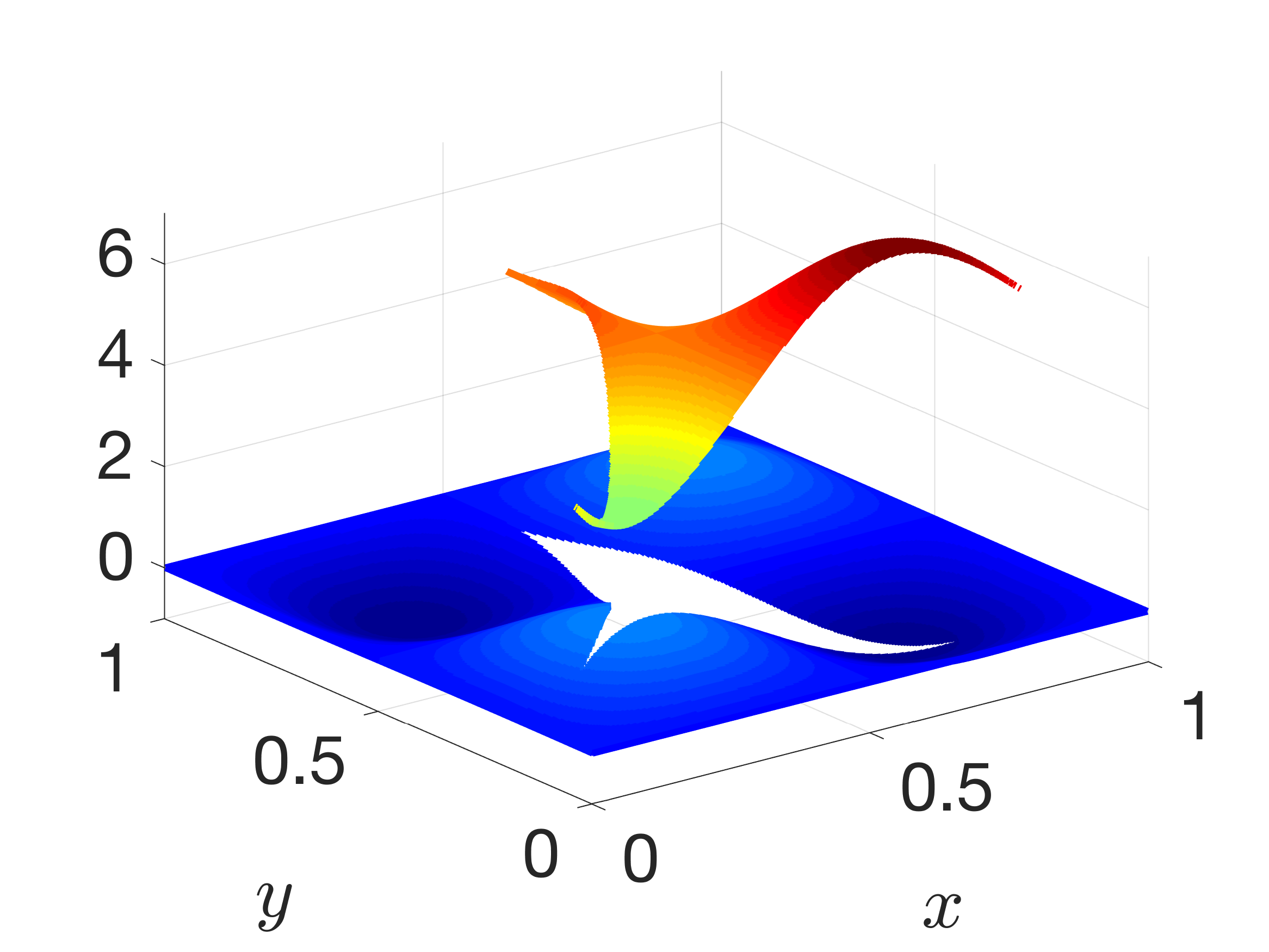}
	}
\subfigure[$H_y$]{
	\includegraphics[width=2.2in]{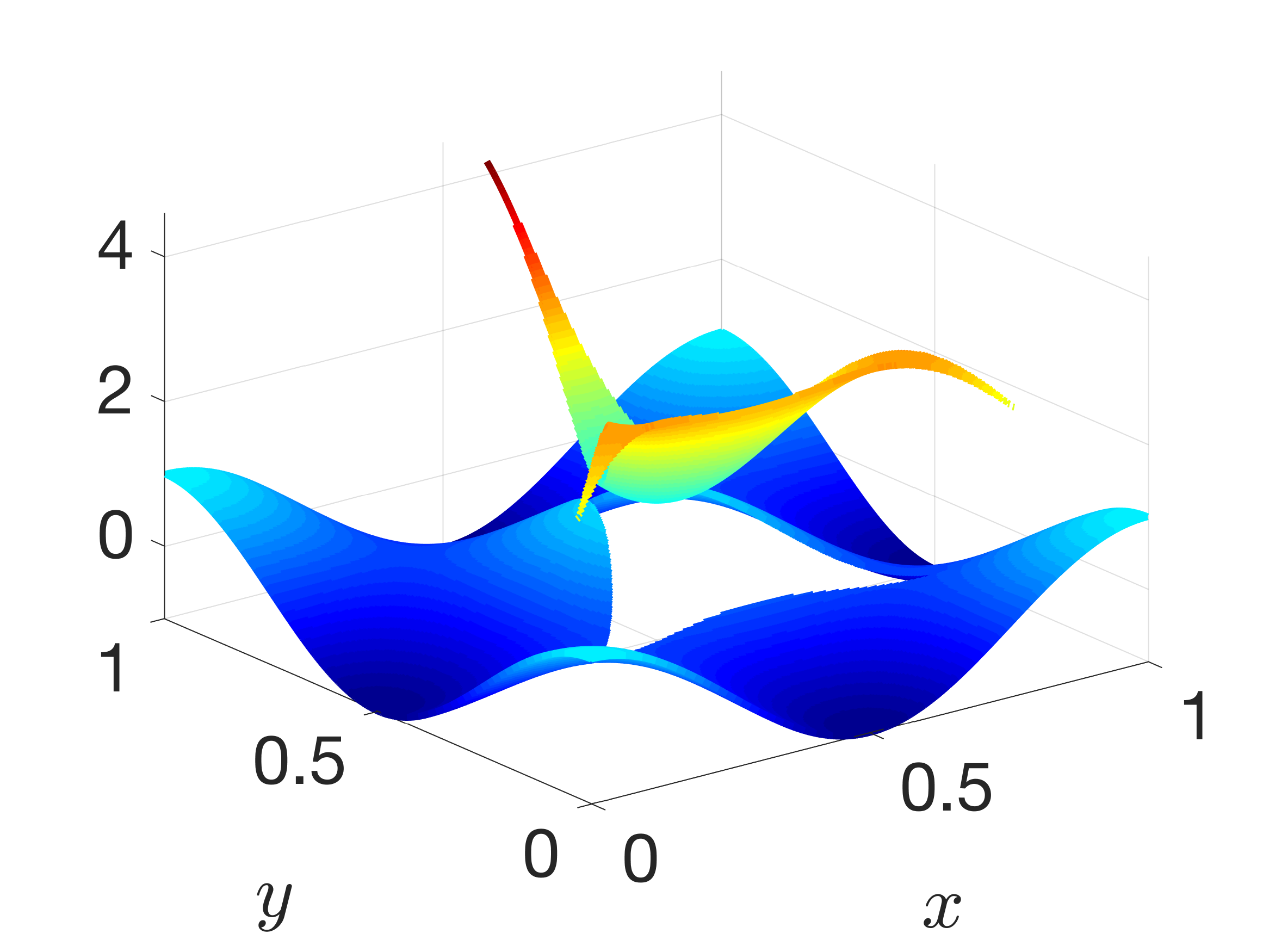}
	}
	\subfigure[$E_z$]{
	\includegraphics[width=2.2in]{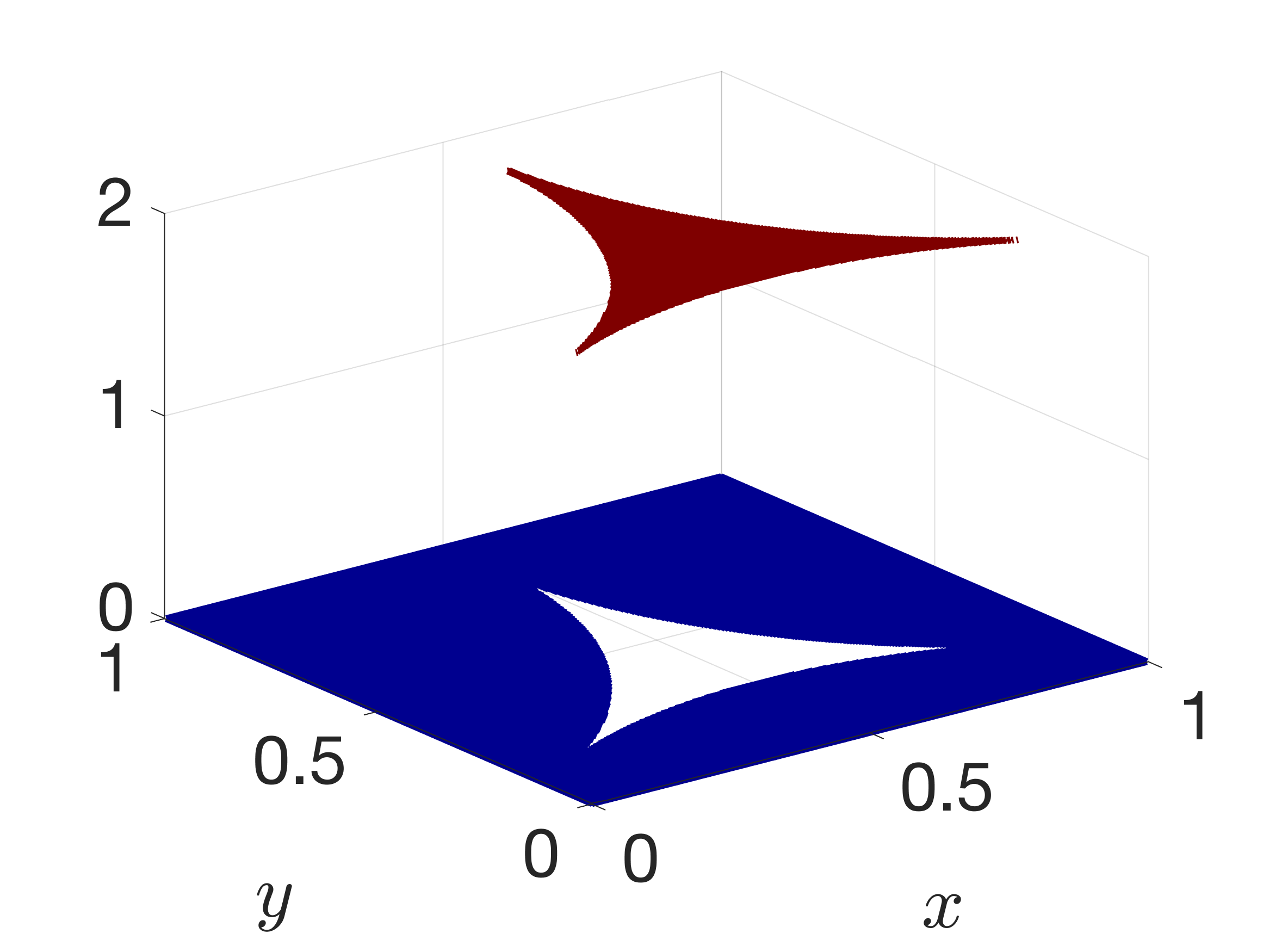}
	}
       \caption{The components $H_x$, $H_y$ and $E_z$ at $t=0.25$ with $h = \tfrac{1}{336}$ and $\Delta t = \tfrac{h}{2}$ using the fourth-order staggered FD scheme with the CFM for the problem with a manufactured solution and a non-smooth interface.}
       \label{fig:plotSharpInterfaceFields}
\end{figure}
\FloatBarrier	
\section{Conclusions}

This work uses the correction function method to develop high-order finite-difference time-domain schemes to handle Maxwell's equations with complex interface conditions and continuous coefficients.
The system of PDEs for which the solution corresponds to correction functions is derived from Maxwell's equations with interface conditions.
We have shown that this system of PDEs does not allow a perturbation on the solution to growth.
A functional that is a square measure of the error associated 
	with the correction functions' system of PDEs is minimized 
	to allow us to compute approximations of correction functions where it is needed.
A discrete divergence-free polynomial space in which the functional is minimized is chosen to satisfy the divergence constraints.
Approximations of correction functions are then used to correct either the second-order or fourth-order staggered FD scheme. 
We use a staggered grid in space to enforce discrete divergence constraints and the fourth-order Runge-Kutta time-stepping method. 
The discrete divergence constraint and the consistency of resulting schemes have been studied.
We have shown that an approximation of the magnetic field remains at divergence-free for a discrete measure of the divergence, 
	except for some nodes around the interface.
Moreover, 
	the leading error term associated with resulting schemes can be influenced 
	by the order of approximations of correction functions.
Numerical experiments have been performed in 2-D using different geometries of the interface. 
All convergence studies are in agreement with the theory. 
In all our numerical experiments,
	the discontinuities within solutions are accurately captured without spurious oscillations.
The proposed numerical strategy is a promising candidate to handle Maxwell's equations with interface conditions 
	without increasing its complexity for arbitrary geometries of the interface while keeping high-order accuracy.
Future work will include discontinuous coefficients to handle more realistic materials, 
	such as dielectrics, 
	and an extension of the proposed numerical strategy in 3-D.

\section*{Acknowledgments}
The authors are grateful to Professor Charles Audet for interesting and helpful conversations.
The research of Professor Jean-Christophe Nave was partially supported by the NSERC Discovery Program.
This is a pre-print of an article published in Journal of Scientific Computing. The final authenticated version is available online at: https://doi.org/10.1007/s10915-020-01148-6.

\bibliographystyle{siamplain}
\bibliography{references}
\end{document}